\newtheorem{theorem}{Theorem}[section]
\newtheorem{lemma}[theorem]{Lemma}
\newtheorem{corollary}[theorem]{Corollary}
\theoremstyle{remark}
\newcommand{\cB}	{\mathcal B}
\newcommand{\bM}	{\mathbb M}
\begin{document}
\title{Steiner's Porism in finite Miquelian M\"obius planes}
\author{Norbert Hungerb\"uhler (ETH Z\"urich) \\ Katharina Kusejko (ETH Z\"urich)}
      
\date{}
\maketitle

\begin{abstract}\noindent 
  We investigate Steiner's Porism in finite Miquelian M\"obius planes
  constructed   over  the   pair  of   finite  fields   $GF(p^m)$  and
  $GF(p^{2m})$, for  $p$ an odd prime  and $m \geq 1$.   Properties of
  common  tangent  circles  for   two  given  concentric  circles  are
  discussed and  with that, a  finite version of Steiner's  Porism for
  concentric circles is stated and proved.  We formulate conditions on
  the  length of  a Steiner  chain by  using the quadratic residue
  theorem in  $GF(p^m)$.  These  results are  then generalized  to an
  arbitrary pair of non-intersecting circles by introducing the notion
  of  capacitance, which  turns  out to  be  invariant under  M\"obius
  transformations.  Finally,   the  results  are  compared   with  the
  situation in the classical Euclidean plane.
\end{abstract}

\qquad \textbf{Keywords:} Finite M\"obius planes, Steiner's Theorem, Steiner chains, capacitance

\qquad \textbf{Mathematics Subject Classification:} 05B25, 51E30, 51B10

\section*{Introduction} 

In the  19th century,  the Swiss mathematician  Jakob Steiner  (1796 -
1863) discovered a beautiful  result about mutually tangential circles
in the  Euclidean plane,  also known  as \emph{Steiner's  Porism}. One
version reads as follows.

\begin{theorem}[Steiner's Porism] 
  Let $\cB_1$ and $\cB_2$ be disjoint circles in the Euclidean plane.
  Consider a sequence of different
  circles $\mathcal{T}_1,\ldots,\mathcal{T}_k$, 
  which are tangential to both $\cB_1$ and $\cB_2$.
  Moreover, let $\mathcal{T}_i$ and $\mathcal{T}_{i+1}$ be tangential 
  for all $i=1,\ldots,k-1$.
  If $\mathcal{T}_1$ and $\mathcal{T}_k$ are tangential as well, 
  then there are infinitely many such chains.
  In particular, every chain of consecutive tangent circles closes after $k$ steps.
\end{theorem}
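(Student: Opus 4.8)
The plan is to exploit the conformal symmetry of the configuration. Since a M\"obius transformation (a composition of inversions and rigid motions) maps circles to circles and preserves tangency, any statement phrased purely in terms of tangency relations is invariant under such maps. The strategy is therefore to transport the arbitrary pair of disjoint circles $\cB_1, \cB_2$ to a normalized configuration in which the porism becomes transparent, namely to two concentric circles, to prove the assertion there, and to transfer it back.

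First I would establish the reduction: for any two disjoint circles in the plane there is an inversion carrying them to a pair of concentric circles. Two disjoint circles lie in a \emph{hyperbolic pencil}, which possesses two real limiting points, the two point-circles of the pencil. Inverting at one limiting point sends every circle of the pencil, and in particular $\cB_1$ and $\cB_2$, to a family sharing the image of the other limiting point as a common center. This is the geometric heart of the argument, and I expect it to be the main obstacle: one must verify that the limiting points are genuinely real (which holds precisely because the two circles are disjoint) and that inversion at such a point really does produce concentric images.

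Next I would analyze the concentric model directly. Let the images be concentric circles of radii $r_1 < r_2$ about a common center $O$. Every circle tangent to both is forced to have the fixed radius $\rho = (r_2 - r_1)/2$, with its center lying on the circle of radius $(r_1 + r_2)/2$ about $O$. Two consecutive tangent circles of the chain, being mutually tangent, have centers separated by a fixed central angle $2\alpha$ determined by $\sin\alpha = \rho/d = (r_2 - r_1)/(r_2 + r_1)$. The rotation group about $O$ acts transitively on admissible starting circles and maps Steiner chains to Steiner chains. Hence whether the chain closes after exactly $k$ steps depends only on the radius ratio, via the single condition that $2k\alpha$ be an integer multiple of $2\pi$, and is completely independent of which tangent circle one starts from. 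If one chain closes, rotating it by an arbitrary angle therefore yields a continuum of further closing chains.

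Finally I would pull the conclusion back through the inverse inversion. Because tangency is preserved in both directions, a chain that closes in the concentric model corresponds to a closing chain for the original pair $\cB_1, \cB_2$, and the infinitely many rotated chains in the model pull back to infinitely many distinct chains in the original picture. Everything after the reduction lemma is elementary trigonometry together with the transitivity of the rotation action, so once the normalization to concentric circles is in hand the result follows cleanly.
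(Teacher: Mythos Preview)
The paper does not actually prove this theorem: the classical Euclidean Steiner's Porism is stated in the Introduction as background and motivation, with references to \cite{MR0389515}, \cite{MR1017034}, and \cite{MR990644}, but no argument is given. There is therefore nothing to compare your proof against directly.

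That said, your outline is the standard and correct proof of the classical result. It is worth noting that the structure you propose is exactly what the paper reproduces in the finite setting: the reduction to concentric circles is Theorem~\ref{MT}, and the role of the rotation group about the common center is played by multiplication by elements $P$ of norm~$1$ on $\cB_1$ (see Lemma~\ref{lemma: tangent circles are tangential} and Theorem~\ref{th: finite Steiner}). The fixed step angle $2\alpha$ in your argument corresponds to the fixed element $P$ with $P^k=1$, and the closing condition $2k\alpha\in 2\pi\mathbb{Z}$ becomes the order condition on $P$. So while the paper omits the Euclidean proof, your approach is precisely the template its finite analogue follows.
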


\begin{figure}[h!] \label{fig: Steiner Porism}
  \centering
  \includegraphics[width=6cm]{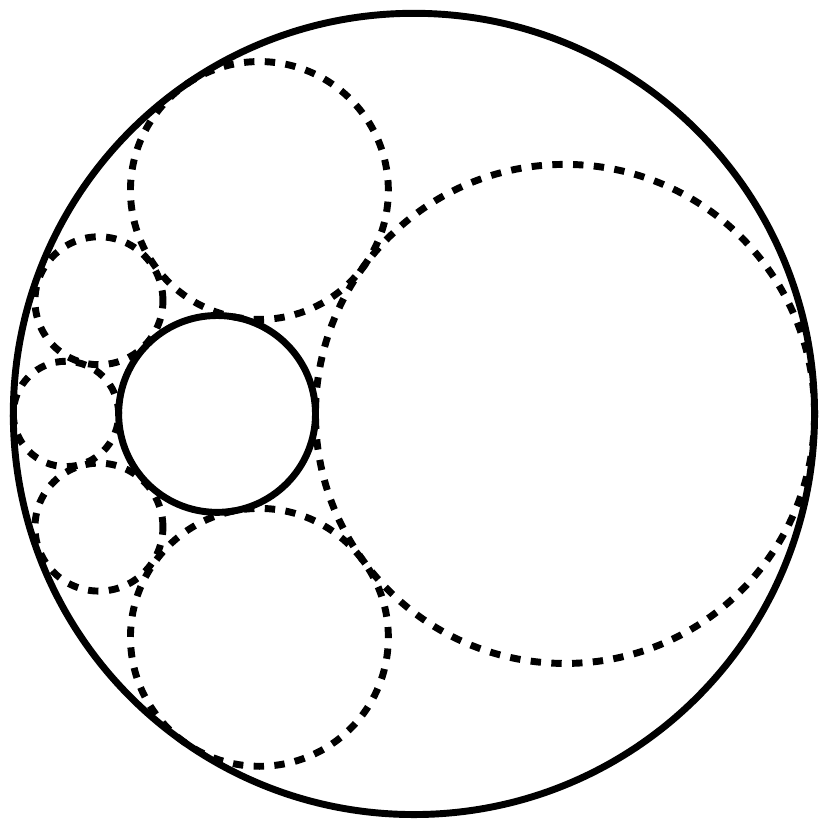} \qquad \includegraphics[width=6.4cm]{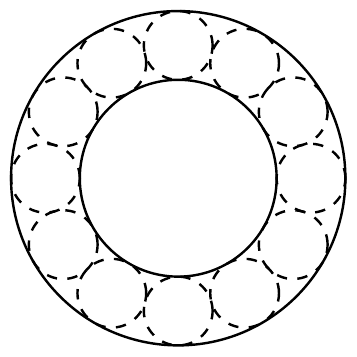}
  \caption{Two examples of Steiner chains in the Euclidean plane.}
\end{figure}

Steiner  thoroughly  investigated  such  chains and  found  many  nice
properties.   For example,  he  could prove  that  the mutual  tangent
points of  the circles  $\mathcal{T}_1,\ldots,\mathcal{T}_k$ lie  on a
circle and their centers  on a conic whose foci are  the centers of the
initial circles $\cB_1$ and $\cB_2$.  He studied conditions for such a
chain to close after $k$ steps in  terms of the radii and the distance
of the centers of $\cB_1$ and  $\cB_2$.  The interested reader can refer
to  \cite{MR0389515},  \cite{MR1017034}  or \cite{MR990644}  for  more
information on Steiner's original result.

In recent years, some refinements and generalizations of Steiner's Porism were studies.
For example in \cite{MR2877262}, Steiner chains with rational radii are discussed and
in \cite{MR3193739}, a three dimensional analogue of Steiner's Porism is presented.

Porisms  in finite  geometry have  not been  investigated to  the same
extent as  in the Euclidean  case. In particular,  as far as  we know,
Steiner's Porism  was not  yet considered  in finite  M\"obius planes.
However,  closing   chains  of  touching  circles   with  a  different
arrangement have been investigated in~\cite{MR758854}.

M\"obius planes consist of points $\mathbb{P}$  and circles $\mathbb{B}$,
which satisfy three axioms.  First, there  needs to be a unique circle
through three  given points.   Second, there  exists a  unique tangent
circle through  a point  on a  given circle  and a  point not  on this
circle.   Finally, a  richness axiom  ensures  that the  plane is  not
trivial.  More precisely, the three axioms read as follows.
\begin{enumerate}
  \item[(M1)] 
    For  any three elements  $P,Q, R \in  \mathbb{P}$, $P \neq  Q$, $P \neq  R$ and $Q \neq  R$, there
    exists  a  unique element  $g   \in  \mathbb{B}$  with  $P \in g$, $Q \in g$ and $R \in g$.
  \item[(M2)] 
    For  any $g \in  \mathbb{B}$, $P,Q   \in  \mathbb{P}$ with  $P \in g$ and $Q \notin g$, 
    there exists a unique element $h \in  \mathbb{B}$ such that $P \in h$ and $Q \in h$, 
    but for all $R \in \mathbb{P}$ with $R \in g, P \neq R$, we have $R \notin h$.
  \item[(M3)] 
    There are four  elements $P_1, P_2, P_3, P_4 \in \mathbb{P}$ such that for all $g \in \mathbb{B}$, 
    we have $P_i \notin g$ for at least one $i \in \{ 1,2,3,4 \}$.
\end{enumerate}
In the  present paper, we will  look at Steiner's Porism  in Miquelian
M\"obius  planes.   They  are  the classical  finite  models  for  the
M\"obius axioms and are constructed over the finite field $GF(p^m)$ of
order $p^m$, for $p$ an odd prime and $m \geq 1$.  The resulting plane
is  denoted   by  $\bM(p^m)$,  the   details  are  explained   in  the
preliminaries.   We will  start with  two disjoint  concentric circles
$\cB_a$  and  $\cB_b$  with  radii   $a$  and  $b$,  respectively,  in
$\bM(p^m)$.  We look for conditions  and properties of their potential
common  tangent  circles.   Concerning  this  question,  we  find  the
following:

\begin{theorem}[cf. Theorem \ref{th: exactly two tangent circles}]
  Let $\frac{b}{a}\neq 1$ be a square in $GF(p^m)$. 
  For every point $P$ on $\cB_a$ there are exactly two circles $g$ and $h$, 
  which are tangential to $\cB_a$ in $P$ and tangential to $\cB_b$ in $\mu P$ and $-\mu P$, 
  respectively, where $\mu^2 = \frac{b}{a}$.
\end{theorem}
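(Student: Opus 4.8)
The plan is to work in the coordinatisation of $\bM(p^m)$ from the preliminaries, in which points are the elements of $GF(p^{2m})\cup\{\infty\}$, conjugation is the Frobenius map $\bar z=z^{p^m}$, the norm $N(z)=z\bar z$ takes values in $GF(p^m)$, and a circle is the zero set of $Az\bar z+Bz+\bar B\bar z+C$ with $A,C\in GF(p^m)$ and $B\in GF(p^{2m})$. Since the two circles are concentric we may translate their common centre to the origin, so that $\cB_a=\{z:z\bar z=a\}$, $\cB_b=\{z:z\bar z=b\}$, and $P\in\cB_a$ means $P\bar P=a$. I would first record that the tangent line to $\cB_a$ at $P$ is $\ell_P:\bar Pz+P\bar z-2a=0$ (it contains $P$ and $\infty$ and meets $\cB_a$ only in $P$), and then describe the family of all circles tangent to $\cB_a$ at $P$ as the pencil
\[
g_\lambda:\ (z\bar z-a)+\lambda(\bar Pz+P\bar z-2a)=0,\qquad \lambda\in GF(p^m).
\]
Because the radical axis $g_\lambda-\cB_a=\lambda\,\ell_P$, every common point of $g_\lambda$ and $\cB_a$ lies in $\ell_P\cap\cB_a=\{P\}$, so each $g_\lambda$ is genuinely tangent to $\cB_a$ at $P$; conversely any circle tangent there has radical axis $\ell_P$ with $\cB_a$ and hence equals some $g_\lambda$ (the degenerate member $\lambda=\infty$ being $\ell_P$ itself).

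Next I would reduce tangency of $g_\lambda$ with $\cB_b$ to a line--circle condition. The common points of $g_\lambda$ and $\cB_b$ are exactly the common points of $\cB_b$ with the radical line
\[
\ell:\ \lambda\bar Pz+\lambda P\bar z+(b-a-2a\lambda)=0,
\]
so $g_\lambda$ is tangent to $\cB_b$ iff $\ell$ is tangent to $\cB_b$. The key technical step, which I expect to be the main obstacle, is the finite-field tangency criterion for a line $uz+\bar u\bar z+v=0$ ($u\in GF(p^{2m})$, $v\in GF(p^m)$) and the circle $z\bar z=b$. Substituting $w=uz$ turns the two equations into $\operatorname{Tr}(w)=-v$ and $N(w)=bN(u)$, i.e.\ $w$ is a root of $t^2+vt+bN(u)=0$; counting the $w\in GF(p^{2m})$ with this prescribed trace and norm shows there are $0$, $1$ or $2$ intersection points according as the discriminant $v^2-4bN(u)$ is a nonzero square, zero, or a nonsquare in $GF(p^m)$ (the square case forces both roots into $GF(p^m)$, where their trace is wrong, while the nonsquare case yields a genuine conjugate pair). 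Hence tangency is exactly $v^2=4bN(u)$.

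Applying this to $\ell$, where $u=\lambda\bar P$, $N(u)=\lambda^2a$ and $v=b-a-2a\lambda$, I would obtain after dividing out the nonzero factor $b-a$ the quadratic
\[
4a\lambda^2+4a\lambda+(a-b)=0,
\]
whose discriminant is $16ab$ and whose roots are $\lambda_\pm=\tfrac12(-1\pm\mu)$ with $\mu^2=b/a$. These lie in $GF(p^m)$ precisely when $b/a$ is a square, and the hypothesis $b/a\neq1$ (with $b\neq0$) gives $\mu\neq0,\pm1$, so $\lambda_+\neq\lambda_-$ and $g_{\lambda_-},g_{\lambda_+}$ are two distinct genuine circles; their leading coefficient $1\neq0$ rules out the line $\ell_P$, which one checks is tangent to $\cB_b$ only if $a=b$. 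Finally, at tangency the contact point satisfies $w=uz=-v/2$, so $z=-v/(2\lambda\bar P)$; inserting $v=b\mp a\mu=a\mu(\mu\mp1)$ and $2\lambda_\pm=-1\pm\mu$ and using $\bar P=a/P$ collapses this to $z=\mp\mu P$. Thus $g_{\lambda_-}$ touches $\cB_b$ in $\mu P$ and $g_{\lambda_+}$ in $-\mu P$, giving exactly the two asserted circles $g$ and $h$.
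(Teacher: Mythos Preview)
Your argument is correct, but it proceeds quite differently from the paper's.

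The paper builds the result from a chain of preparatory lemmas: first a general tangency condition for $\cB_a$ (Lemma~\ref{tangentConcentric}), then a \emph{global} count showing that $\tau(a,b)$ has exactly $2(p^m+1)$ elements (Corollary~\ref{cor: common tangents}), then a lemma forcing the contact point on $\cB_b$ to be $\pm\mu P$ once the contact point on $\cB_a$ is $P$ (Lemma~\ref{lemma: tangent points}). The actual proof of the theorem is then just two lines: axiom (M2) gives at most two common tangents through $P$ (one for each of $\mu P,-\mu P$), and pigeonhole against the global count $2(p^m+1)$ over the $p^m+1$ points of $\cB_a$ forces exactly two at every $P$.

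You instead work \emph{locally} at the fixed point $P$: you parametrise the full pencil of circles tangent to $\cB_a$ at $P$ by a single parameter $\lambda\in GF(p^m)$, reduce tangency with $\cB_b$ to the line--circle tangency criterion $v^2=4bN(u)$ (whose proof via the trace/norm description of $w=uz$ is the genuine technical core of your argument), and solve the resulting quadratic $4a\lambda^2+4a\lambda+(a-b)=0$ explicitly. This yields the two circles and their contact points $\mp\mu P$ by direct computation, with no appeal to (M2) or any global counting. Your approach is more computational but entirely self-contained and constructive: it produces the explicit centres $-\lambda_\pm P$ and radii $a(1+\lambda_\pm)^2$, which the paper only obtains later in Corollary~\ref{cor: two groups of tangent circles}. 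The paper's route, by contrast, is cleaner once the lemmas are in hand, and its pigeonhole step is an elegant way to avoid ever solving for the circles explicitly. One small point worth making explicit in your write-up: the claim that every circle tangent to $\cB_a$ at $P$ lies in the pencil $g_\lambda$ uses that $\ell_P$ is the \emph{unique} type-2 circle tangent to $\cB_a$ at $P$ (equivalently, axiom (M2) applied with $Q=\infty$), so that the radical axis with $\cB_a$ must be $\ell_P$.
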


We are interested in finding a condition for the existence of Steiner chains, i.e.\
a chain of circles which are tangential to both $\cB_a$ and $\cB_b$ and also mutually tangential.
We will see that the finiteness directly implies that all chains close up
and we only have to deal with possibly degenerate Steiner chains, which will be defined later on.
In particular, our finite version of Steiner's Porism reads as follows:
 
\begin{theorem}[cf. Theorem \ref{th: finite Steiner}]
  Consider the two disjoint concentric circles $\cB_1$ and $\cB_b$,
  with radii $1$ and $b$, respectively, in $\bM(p^m)$. 
  Assume we can find two circles $g$ and $h$ tangential to $\cB_b$, 
  which are tangential to $\cB_1$ in $1$ and $P$, respectively,
  and $g$ and $h$ are mutually tangential.
  Then a Steiner chain of length $k$ can be constructed, 
  where $k \in \{1, \ldots, p^m+1 \}$ is minimal with $ P^k = 1$.
  Moreover, such a Steiner chain of length $k$ can be constructed starting with any
  point $Q$ on $\cB_1$.
\end{theorem}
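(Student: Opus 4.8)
The plan is to exploit the multiplicative symmetry of the two concentric circles. Recall that in $\bM(p^m)$ the points of a concentric circle $\cB_a$ are exactly the elements $z$ of $GF(p^{2m})$ with norm $z^{p^m+1}=a$; in particular the points of $\cB_1$ form the cyclic group $U$ of norm-one elements, which has order $p^m+1$, and $P\in U$. For $\lambda\in U$ consider the map $m_\lambda\colon z\mapsto \lambda z$. Since it is a linear fractional (hence M\"obius) transformation and the norm is multiplicative, $m_\lambda$ maps $\cB_a$ onto $\cB_a$ for every $a$, fixes the common centre and $\infty$, and—being an automorphism of the plane—sends circles to circles and preserves tangency. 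On tangent points it acts by multiplication by $\lambda$. This is the symmetry I will iterate.

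First I would locate the successor of $g$ under the rotation $m_P$. By the theorem on the exactly two tangent circles, $g$ is tangent to $\cB_b$ in $\mu$ or $-\mu$; say in $\mu$. Then $m_P(g)$ is tangent to $\cB_1$ in $P$ and to $\cB_b$ in $\mu P$, so it is one of the two circles tangent to $\cB_1$ in $P$. The key step is to identify $m_P(g)$ with the prescribed circle $h$. Since $h$ is tangent to $\cB_1$ in $P$ it is also one of these two circles, touching $\cB_b$ either in $\mu P$ or in $-\mu P$; I must show that the hypothesis that $g$ and $h$ are mutually tangent forces the first (same-side) choice, i.e.\ $m_P(g)=h$. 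This amounts to checking, via the explicit tangency condition in $\bM(p^m)$, that among the two tangent circles in $P$ exactly the one touching $\cB_b$ in $\mu P$ is tangent to $g$, the other being tangent to the companion circle of $g$ in $1$. I expect this sign bookkeeping to be the main obstacle.

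Granting $m_P(g)=h$, the chain writes itself: set $\mathcal T_i:=m_P^{\,i-1}(g)=m_{P^{i-1}}(g)$ for $i\ge 1$, so that $\mathcal T_1=g$, $\mathcal T_2=h$, and $\mathcal T_i$ is tangent to $\cB_1$ in $P^{i-1}$ and to $\cB_b$. Each consecutive pair satisfies $(\mathcal T_i,\mathcal T_{i+1})=m_{P^{i-1}}(g,h)$, the image of the mutually tangent pair $(g,h)$ under an automorphism, hence $\mathcal T_i$ and $\mathcal T_{i+1}$ are tangent for every $i$. Because the tangent points $1,P,P^2,\dots$ are distinct until $P$ returns to $1$, the circles $\mathcal T_1,\dots,\mathcal T_k$ are pairwise distinct, where $k$ is the order of $P$ in $U$, i.e.\ the minimal $k\in\{1,\dots,p^m+1\}$ with $P^k=1$.

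It remains to see that the chain closes after exactly $k$ steps. Since the action of $U$ on points is faithful, $m_{P^k}=\mathrm{id}$ if and only if $P^k=1$; hence $\mathcal T_{k+1}=m_{P^k}(g)=g=\mathcal T_1$, and the pair $(\mathcal T_k,\mathcal T_1)=m_{P^{k-1}}(g,h)$ is again tangent, so $\mathcal T_1,\dots,\mathcal T_k$ is a closed Steiner chain of length $k$, while for no smaller index does $\mathcal T_i$ return to $\mathcal T_1$. Finally, for the ``moreover'' statement, given any point $Q$ on $\cB_1$ I apply the automorphism $m_Q$ to the whole chain: $m_Q(\mathcal T_1),\dots,m_Q(\mathcal T_k)$ is again a family of circles tangent to $\cB_1$ and $\cB_b$ with all consecutive and cyclic tangencies preserved, and $m_Q(\mathcal T_1)$ is tangent to $\cB_1$ in $Q$, yielding a Steiner chain of the same length $k$ starting at $Q$.
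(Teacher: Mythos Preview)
Your approach is essentially the paper's: both exploit the rotational action $z\mapsto\lambda z$ of the norm-one group on $\bM(p^m)$, which fixes every concentric circle $\cB_a$ setwise. The paper packages this as Lemma~\ref{lemma: tangent circles are tangential} (verifying algebraically that the tangency condition $s(P-1)\overline{s(P-1)}=4c$ is unchanged when one multiplies by any $Q\in\cB_1$), whereas you invoke the M\"obius automorphism $m_\lambda$ directly and use that automorphisms preserve tangency---a cleaner formulation of the same idea.

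The one substantive difference is the step you yourself flag as the ``main obstacle'', namely $m_P(g)=h$. The paper sidesteps it: its Theorem~\ref{th: finite Steiner} is phrased not for arbitrary $g,h\in\tau(1,b)$ but for two circles already written as $\cB^1_{(sP,c)}$ and $\cB^1_{(sQ,c)}$ with the \emph{same} parameters $(s,c)$ from Corollary~\ref{cor: two groups of tangent circles}, so both lie in the same family and the rotation carries one to the other automatically. If you want to prove the introduction's looser formulation as written, the missing check can be completed: with $s=\tfrac{1+\mu}{2}$, $c=\bigl(\tfrac{1-\mu}{2}\bigr)^2$, $s'=\tfrac{1-\mu}{2}$, $c'=\bigl(\tfrac{1+\mu}{2}\bigr)^2$, a direct tangency computation on the cross-pair $\cB^1_{(s,c)}$ and $\cB^1_{(s'P,c')}$ reduces $(c+c'-(s-s'P)\overline{(s-s'P)})^2=4cc'$ to $(P+\bar P)^2=4$, i.e.\ $P=\pm 1$. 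Hence for $P\neq\pm1$ (equivalently $k\ge 3$) the mutual-tangency hypothesis forces $g$ and $h$ into the same family, so $m_P(g)=h$ and your argument goes through unchanged.
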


In the classical Euclidean plane, 
conditions on the length of Steiner chains are well-known.
For two concentric circles of radius $1$ and $R$, 
one can construct a Steiner chain of length $k \geq 3$
which wraps $w$ times around the smaller circle, if and only if
$$ R = \frac{1 + \sin(\phi)}{1-\sin(\phi)},$$
where $\phi = \frac{w\pi}{k}$.
We will ask for such conditions in the finite case and obtain the following result:

\begin{theorem}[cf. Theorem \ref{concentric}]
  Let $b = \mu^2\neq 1$ for $\mu$ in $GF(p^m)$.
  The two concentric circles $\cB_1$ and $\cB_b$ with radius $1$ and $b$, respectively,
  carry a proper Steiner chain of length $k$ if and only if the following conditions are satisfied.
    \begin{enumerate}
      \item $-\mu$ is a non-square in $GF(p^m)$,
      \item $\mu$ solves $ P^k=1$ for $P$ given by 
        \begin{align}
          P = \frac{-\mu^2+6\mu-1 + 4(\mu-1)\sqrt{-\mu}}{(1+\mu)^2},
        \end{align}
            but $P^l \neq 1$ for all $1 \leq l \leq k-1$.
    \end{enumerate}

In  particular,  if these  conditions  are  satisfied, $\cB_1$  and
$\cB_b$ carry a chain of length $k$.
\end{theorem}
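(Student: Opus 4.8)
The plan is to reduce the statement to the finite version of Steiner's Porism (Theorem \ref{th: finite Steiner}) and then to carry out the single explicit tangency computation that produces the formula for $P$. Throughout I write $q = p^m$ and realise $\bM(p^m)$ in the standard way, with point set $GF(q^2)\cup\{\infty\}$ and the concentric circles given as norm level sets $\cB_r = \{x \in GF(q^2) : x^{q+1} = r\}$ for $r \in GF(q)^\ast$; in particular $\cB_1$ is the group of $(q+1)$-th roots of unity, a cyclic group of order $q+1$, which is exactly why the closing condition can be phrased as $P^k = 1$. By Theorem \ref{th: finite Steiner} it suffices to exhibit two mutually tangent circles $g$ and $h$, both tangent to $\cB_1$ and to $\cB_b$, with $g$ touching $\cB_1$ at $1$ and $h$ touching $\cB_1$ at a point $P$; the resulting chain then has length equal to the order of $P$ in $\cB_1$, so the whole task is to determine $P$ explicitly and to decide when the chain is proper.

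First I would write every circle other than a concentric one in the affine form $x^{q+1} + B^q x + B x^q + C = 0$ with $C \in GF(q)$ and $B \in GF(q^2)$, the finite analogue of the complex equation of a circle. Using Theorem \ref{th: exactly two tangent circles} I fix $g$ as the circle tangent to $\cB_1$ at $1$ and to $\cB_b$ at $\mu$, and I determine its coefficients $B,C$ by imposing these two tangencies (each tangency being the condition that the quadratic in $x$ obtained after eliminating $x^{q+1}$ has a double root at the prescribed point). I would then let $h$ be a second circle of the same form, tangent to $\cB_1$ at an unknown point $P$ and to $\cB_b$ at $\mu P$, and impose that $g$ and $h$ be tangent to one another.

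The heart of the argument is solving this last tangency condition for $P$. Eliminating the coefficients of $h$ in favour of $P$ and simplifying, the mutual--tangency equation becomes a quadratic whose two roots are precisely the two neighbours of $g$ along the two directions of the chain; they are interchanged by the Frobenius substitution $\sqrt{-\mu}\mapsto -\sqrt{-\mu}$, and one of them is the displayed
\[
  P = \frac{-\mu^2+6\mu-1 + 4(\mu-1)\sqrt{-\mu}}{(1+\mu)^2}.
\]
To see that $P$ is a genuine point of $\cB_1$ I would verify $P^{q+1}=1$ directly: since $P\,P^q$ multiplies $P$ by its Frobenius conjugate, the numerator becomes $(-\mu^2+6\mu-1)^2 + 16\mu(\mu-1)^2$, and the key algebraic identity
\[
  (-\mu^2+6\mu-1)^2 + 16\mu(\mu-1)^2 = (1+\mu)^4
\]
gives $P^{q+1}=1$ exactly when $\sqrt{-\mu}\in GF(q^2)\setminus GF(q)$, i.e.\ when $-\mu$ is a non-square, so that Frobenius indeed sends $\sqrt{-\mu}$ to $-\sqrt{-\mu}$. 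This is condition (a). When $-\mu$ is instead a square, $\sqrt{-\mu}\in GF(q)$, the displayed $P$ lies in $GF(q)$ and the configuration degenerates, so condition (a) is precisely the dividing line between a proper chain and a degenerate one.

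Finally, with $P$ pinned down as a point of $\cB_1$, Theorem \ref{th: finite Steiner} says the chain closes after $k$ steps where $k$ is the order of $P$ in the cyclic group $\cB_1$; phrasing this as ``$k$ minimal with $P^k=1$'' is exactly condition (b), and since the chain may be started at any point of $\cB_1$ it wraps uniformly and is a proper Steiner chain of length $k$. Both implications follow from this equivalence: conditions (a) and (b) together guarantee that the displayed $P$ is a point of $\cB_1$ of exact order $k$, hence a proper chain of length $k$, while conversely a proper chain forces the tangency configuration above, hence the formula and the order condition. I expect the main obstacle to be the middle step, namely setting up the tangency of two non-concentric circles in closed form and reducing the resulting system to the single quadratic in $P$ without getting lost in the elimination; once the identity $(-\mu^2+6\mu-1)^2 + 16\mu(\mu-1)^2 = (1+\mu)^4$ is in hand, the norm computation and the properness dichotomy are immediate.
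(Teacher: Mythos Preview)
Your proposal is correct and follows essentially the same route as the paper: both arguments pin down $P$ by imposing mutual tangency between two members of the family of common tangent circles (the paper does this via the criterion $s(P-1)\overline{s(P-1)}=4c$ of Lemma~\ref{lemma: tangent circles are tangential} with the explicit $s,c$ of Corollary~\ref{cor: two groups of tangent circles}, packaged as Lemma~\ref{lemma: P1 and P2}), read off the non-square condition on $-\mu$ from that computation, and then invoke Theorem~\ref{th: finite Steiner} for the closing/length statement. Your explicit verification that $P^{q+1}=1$ via the identity $(-\mu^2+6\mu-1)^2+16\mu(\mu-1)^2=(1+\mu)^4$ is a pleasant addition that the paper leaves implicit.
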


In the last section, we introduce the notion of capacitance for a pair of
circles  and prove  that  this quantity  is  invariant under  M\"obius
transformations. This  fact allows to  formulate a criterion  for the
existence of proper Steiner chains of  length $k$ for an arbitrary pair
of non-intersecting circles.

Finally, the results are compared to the conditions on Steiner chains in the Euclidean plane.


\section{Preliminaries}

We will describe an explicit construction of the finite Miquelian M\"obius plane 
using finite fields.
For that, we need to recall some properties of finite fields $GF(p^m)$, 
$p$ an odd prime and $m \geq 1$. 

An element $a \in GF(p^m)$ is called a \emph{square} in $GF(p^m)$ 
if there exists some $b \in GF(p^m)$ with $a = b^2$. 
Otherwise, $a \in GF(p^m)$ is called a \emph{non-square} in $GF(p^m)$. 
Exactly half of the elements in $GF(p^m)\setminus \{0\}$ are squares. 
Note that the squares of $GF(p^m)$ form a subgroup of $GF(p^m)$, but the non-squares do not. 
In particular, multiplying two non-squares in $GF(p^m)$ gives a square 
and multiplying a square and a non-square in $GF(p^m)$ gives a non-square.

For any non-square $x$ in $GF(p^m)$, 
we can construct an extension field of $GF(p^m)$ 
by adjoining some element $\alpha$ with $\alpha^2 = x$ to $GF(p^m)$. 
The elements in the extension field $GF(p^m)(\alpha)$ are of the form $a+\alpha b$ for $a,b \in GF(p^m)$. 
Note that all elements of $GF(p^m)$ are squares in $GF(p^m)(\alpha)$. 
To see this, take some element $a \in GF(p^m)$. 
If $a$ is a square in $GF(p^m)$, it clearly is a square in $GF(p^m)(\alpha)$ as well. 
If $a$ is a non-square in $GF(p^m)$,
then $a x$ is a square in $GF(p^m)$ and hence $ax=b^2$ which leads to $a = \alpha^{-2} b^2$, 
i.e.\ $a$ is a square in $GF(p^m)(\alpha)$.

Since $GF(p^m)(\alpha)$ is isomorphic to any field with $p^{2m}$ elements, 
we will denote it by $GF(p^{2m})$. 

For $z \in GF(p^{2m})$, define the \emph{conjugate element} of $z$ over $GF(p^m)$ by
  $$\overline{z}:=z^{p^m}.$$
Note that $z=\overline{z}$ if and only if $z \in GF(p^m)$.
Define the \emph{trace} of $z$ over $GF(p^m)$ by
  $$ \operatorname{Tr}_{GF(p^{2m})/GF(p^m)}(z):=z+\overline{z} $$
and the \emph{norm} of $z$ over $GF(p^m)$ by 
  $$ \operatorname{N}_{GF(p^{2m})/GF(p^m)}(z):=z\overline{z}, $$
where we omit the subscript $GF(p^{2m})/GF(p^m)$ for notational convenience.
Recall that $\operatorname{Tr}(z)$ and $\operatorname{N}(z)$ are always in $GF(p^m)$ and that
$\overline{z_1+z_2}=\overline{z_1}+\overline{z_2}$ 
as well as $\overline{z_1 z_2}=\overline{z_1} \ \overline{z_2}$.
For more background on finite fields, 
one can have a look at \cite{MR1429394}.

We are now going to describe finite Miquelian M\"obius planes,
constructed over the pair of finite fields $GF(p^{m})$ and $GF(p^{2m})$, 
$p$ an odd prime and $m \geq 1$.
Such planes will be denoted by $\bM(p^m)$ and 
$p^m$ is called the \emph{order} of $\bM(p^m)$.

The $p^{2m}+1$ points of $\bM(p^m)$ are given by all elements of $GF(p^{2m})$
together with a point at infinity, denoted by $\infty$.
We distinguish between two different types of circles. 
For circles of the first type, we consider solutions of the equation
$N(z-s) = c$, i.e.\
\begin{align} \label{circle1}
\cB^1_{(s,c)}: \ (z-s)(\overline{z}-\overline{s}) = c
\end{align}
for $s \in GF(p^{2m})$ and $c \in GF(p^m)\backslash \{0\}$. 
It can easily be seen that there are $p^m+1$ points in $GF(p^{2m})$ 
on every circle (\ref{circle1}). 
Moreover, there are $p^{2m}(p^m-1)$ circles of the first type.

For circles of the second type, we consider the equation
$Tr(\overline{s}z) = c$, i.e.\
  \begin{align} \label{circle2}
    \cB^2_{(s,c)}: \ \overline{s}z+s\overline{z}=c
  \end{align}
for $s \in GF(p^{2m})\backslash \{0\}$ and $c \in GF(p^m)$.
For every such choice of $s$ and $c$, equation (\ref{circle2}) has $p^m$ solutions in $GF(p^{2m})$. 
To obtain circles of the second type, 
we take those $p^m$ solutions together with $\infty$. 
There are $(p^{2m}-1)p^m$ choices for $s$ and $c$, but scaling with any element of $GF(p^m)\backslash\{0\}$ leads to the same circle. 
Hence, there are $p^m(p^m+1)$ circles of the second type. 
There are $p^{3m}+p^m$ circles in total and 
on each circle there are $p^m+1$ points. 
This can also be seen by (M1), 
as three points uniquely define a circle.
Now, let $a,b,c,d \in GF(p^{2m})$ such that $ad-bc \neq 0$. 
The map $\Phi$ defined by
   $$ \Phi: \bM(p^{m}) \rightarrow \bM(p^{m}), \ \Phi(z)=\begin{cases}
  \frac{az+b}{cz+d}&\text{if $z\neq\infty$ and $cz+d\neq0$}\\
  \infty   &\text{if $z\neq\infty$ and $cz+d=0$}\\
  \frac ac &\text{if $z=\infty$ and $c\neq 0$}\\
  \infty   &\text{if $z=\infty$ and $c= 0$}\end{cases}$$
is called a \emph{M\"obius transformation} of $\bM(p^{m})$.
Every M\"obius transformation is an automorphism of $\bM(p^m)$.
A M\"obius transformation of the form $\Phi(z) = \frac{1}{z}$ 
is called an \emph{inversion} and M\"obius planes with an inversion are called
inversive M\"obius planes. In \cite{MR0177345} it is shown that
inversive M\"obius planes are exactly the Miquelian M\"obius planes.

Note that a M\"obius transformation operates three times sharply transitive, 
i.e.\ there is a unique M\"obius transformation mapping any three points into any other three given points.
For more background information on finite M\"obius planes, one can refer to \cite{MR1434062}.


\section{The plane $\bM(5)$} \label{section: Example}

We have a closer look at the M\"obius plane $\bM(5)$ 
constructed over $GF(5)$ and $GF(5^2)$ as described in the preliminaries. 
The first step is to start with two disjoint circles and search for common tangent circles. 
Since there is a M\"obius transformation 
which maps any given circle into $B_1:=\cB^1_{(0,1)}$, where
  $$ \cB_1 = \{1, 4, 2 + \alpha, 3 + \alpha, 2 + 4 \alpha, 3 + 4 \alpha\}, $$
we take this circle and search for circles disjoint to $\cB_1$. 
By direct computation, we find $30$ of them.

There are $10$ circles in the intersection of the set of all circles tangential to $\cB_1$ 
and all circles tangential to one of the $30$ circles disjoint to $\cB_1$. 
Moreover, each of these $10$ circles and $\cB_1$ have exactly $12$ common tangent circles. 
Consider for example
  $$ \cB_4:=\cB^1_{(0,4)} = \{2, 3, 1 + 2 \alpha, 4 + 2 \alpha, 1 + 3 \alpha, 4 + 3 \alpha\}. $$
Then the $12$ common tangent circles of $\cB_1$ and $\cB_4$ are given by
  \begin{align*}
    \mathcal{T}_1:=\cB^1_{(4,4)} &= \{1, 2, 2 \alpha, 3 + 2 \alpha, 3 \alpha, 3 + 3 \alpha\}\\
    \mathcal{T}_2:=\cB^1_{(2+\alpha,4)} &= \{\alpha, 4 + \alpha, 1 + 3 \alpha, 3 + 3 \alpha, 1 + 4 \alpha, 3 + 4 \alpha\}\\
    \mathcal{T}_3:=\cB^1_{(3+\alpha,4)} &= \{\alpha, 1 + \alpha, 2 + 3 \alpha, 4 + 3 \alpha, 2 + 4 \alpha, 4 + 4 \alpha\}\\
    \mathcal{T}_4:=\cB^1_{(1,4)} &= \{3, 4, 2 \alpha, 2 + 2 \alpha, 3 \alpha, 2 + 3 \alpha\}\\
    \mathcal{T}_5:=\cB^1_{(3+4\alpha,4)} &= \{2 + \alpha, 4 + \alpha, 2 + 2 \alpha, 4 + 2 \alpha, 4 \alpha, 1 + 4 \alpha\}\\
    \mathcal{T}_6:=\cB^1_{(2+4\alpha,4)} &= \{1 + \alpha, 3 + \alpha, 1 + 2 \alpha, 3 + 2 \alpha, 4 \alpha, 4 + 4 \alpha\}
  \end{align*} 
and
  \begin{align*}
    \mathcal{T}_7:=\cB^1_{(1+3\alpha,1)} &= \{3 + 2 \alpha, 4 + 2 \alpha, 3 \alpha, 2 + 3 \alpha, 3 + 4 \alpha, 4 + 4 \alpha\}\\
    \mathcal{T}_8:=\cB^1_{(4+3\alpha,1)} &= \{1 + 2 \alpha, 2 + 2 \alpha, 3 \alpha, 3 + 3 \alpha, 1 + 4 \alpha, 2 + 4 \alpha\}\\
    \mathcal{T}_9:=\cB^1_{(3,1)} &= \{2, 4, \alpha, 1 + \alpha, 4 \alpha, 1 + 4 \alpha\}\\
    \mathcal{T}_{10}:=\cB^1_{(4+2\alpha,1)} &= \{1 + \alpha, 2 + \alpha, 2 \alpha, 3 + 2 \alpha, 1 + 3 \alpha, 2 + 3 \alpha\}\\
    \mathcal{T}_{11}:=\cB^1_{(1+2\alpha,1)} &= \{3 + \alpha, 4 + \alpha, 2 \alpha, 2 + 2 \alpha, 3 + 3 \alpha, 4 + 3 \alpha\}\\
    \mathcal{T}_{12}:=\cB^1_{(2,1)} &= \{1, 3, \alpha, 4 + \alpha, 4\alpha, 4 + 4 \alpha\}.
  \end{align*}

Note that $\mathcal{T}_1$ is tangential to $\cB_1$ in $1$ and tangential to $\cB_4$ in $2$. 
Next, consider $\mathcal{T}_2$, 
which is tangential to $\cB_1$ in $3+4\alpha$ and tangential to $\cB_4$ in $1+3\alpha$. 
Note that $\mathcal{T}_1$ and $\mathcal{T}_2$ only intersect in $3+3\alpha$, 
i.e. they are mutually tangential. 
Having a closer look at $\mathcal{T}_2$, 
we see that only two of the $12$ circles above are tangential to $\mathcal{T}_2$ 
in points not on $\cB_1$ or $\cB_4$, 
namely $\mathcal{T}_1$, which we already considered, 
and $\mathcal{T}_3$, which is tangential to $\mathcal{T}_2$ in $\alpha$. 
Apparently, from now on, 
there is a unique way of constructing a chain of common tangent circles of $\cB_1$ and $\cB_4$ 
which are mutually tangential as well. 
Proceeding, we find that $\mathcal{T}_4$ is tangential to $\mathcal{T}_3$ in $2+3\alpha$. 
Then $\mathcal{T}_5$ is tangential to $\mathcal{T}_4$ in $2+2\alpha$. 
Finally, we find that $\mathcal{T}_6$ is tangential to $\mathcal{T}_5$ in $4\alpha$ and also
$\mathcal{T}_1$ is tangential to $\mathcal{T}_6$ in $3+2\alpha$, 
which closes the chain of circles. 

Note that those six tangent points lie on a circle itself, namely on
  $$ \cB_2:=\cB^1_{(0,2)} = \{\alpha, 2 + 2 \alpha, 3 + 2 \alpha, 2 + 3 \alpha, 3 + 3 \alpha, 4 \alpha\}.$$
Summarized, we denote this chain by
  $$
  \mathcal{T}_1 \xrightarrow{\parbox{12mm}{\centering\scriptsize$3+3\alpha$}}
   \mathcal{T}_2 \xrightarrow{\parbox{12mm}{\centering\scriptsize$\alpha$}}
   \mathcal{T}_3 \xrightarrow{\parbox{12mm}{\centering\scriptsize$2+3\alpha$}}
   \mathcal{T}_4 \xrightarrow{\parbox{12mm}{\centering\scriptsize$2+2\alpha$}}
      \mathcal{T}_5 \xrightarrow{\parbox{12mm}{\centering\scriptsize$4\alpha$}}
   \mathcal{T}_6 \xrightarrow{\parbox{12mm}{\centering\scriptsize$3+2\alpha$}} \mathcal{T}_1. 
   $$
Note that for the above chain, 
we only used six out of the twelve common tangent circles of $\cB_1$ and $\cB_4$, 
so let us start with a tangent circle not used so far, e.g.  $\mathcal{T}_7$. 
We find the chain
$$
\mathcal{T}_7 \xrightarrow{\parbox{12mm}{\centering\scriptsize$3\alpha$}}
\mathcal{T}_8 \xrightarrow{\parbox{12mm}{\centering\scriptsize$1+4\alpha$}} 
\mathcal{T}_9 \xrightarrow{\parbox{12mm}{\centering\scriptsize$1+\alpha$}}
\mathcal{T}_{10} \xrightarrow{\parbox{12mm}{\centering\scriptsize$2\alpha$}}
\mathcal{T}_{11} \xrightarrow{\parbox{12mm}{\centering\scriptsize$4+\alpha$}}
\mathcal{T}_{12} \xrightarrow{\parbox{12mm}{\centering\scriptsize$4+4\alpha$}}
\mathcal{T}_7. 
$$
Again, the six tangent points form a circle, namely
  $$ \cB_3:=\cB^1_{(0,3)} = \{1 + \alpha, 4 + \alpha, 2 \alpha, 3 \alpha, 1 + 4 \alpha, 4 + 4 \alpha\}.  $$

Now, let us proceed from here and look at the circles through two consecutive 
(where the order is defined by the chain before) 
points of $\cB_1$ and the according tangent point on $\cB_2$. 
We obtain six new circles, which are all tangential to $\cB_2$, given by:
  \begin{align*}
    \mathcal{S}_1:=\cB^1_{(1+\alpha,2)} &= \{1, 1 + 2 \alpha, 3 + 3 \alpha, 4 + 3 \alpha, 3 + 4 \alpha, 4 + 4 \alpha\}\\
    \mathcal{S}_2:=\cB^1_{(2\alpha,2)} &= \{2, 3, \alpha, 3 \alpha, 2 + 4 \alpha, 3 + 4 \alpha\}\\
    \mathcal{S}_3:=\cB^1_{(4+\alpha,2)} &= \{4, 4 + 2 \alpha, 1 + 3 \alpha, 2 + 3 \alpha, 1 + 4\alpha, 2 + 4 \alpha\}\\
    \mathcal{S}_4:=\cB^1_{(4+4\alpha,2)} &= \{4, 1 + \alpha, 2 + \alpha, 1 + 2 \alpha, 2 + 2 \alpha, 4 + 3 \alpha\}\\
    \mathcal{S}_5:=\cB^1_{(3\alpha,2)} &= \{2, 3, 2 + \alpha, 3 + \alpha, 2 \alpha, 4 \alpha\}\\
    \mathcal{S}_6:=\cB^1_{(1+4\alpha,2)} &= \{1, 3 + \alpha, 4 + \alpha, 3 + 2 \alpha, 4 + 2 \alpha, 1 + 3 \alpha\}
  \end{align*}

These six circles form a chain of mutually tangential circles as well. 
Moreover, there is a unique circle, except $\cB_2$, 
which is tangential to all of those six new circles, namely $\cB_3$.

We can do the same procedure once more, 
i.e.\ we consider circles through two consecutive points on $ \cB_3$ and the according tangent point of the chain in consideration. 
When doing that, note that we obtain six common tangent circles of $\cB_1$ and $\cB_4$, 
i.e.\ we have two chains including all twelve common tangent circles of of $\cB_1$ and $\cB_4$.

The following figure summarizes the above discussion.

\begin{figure}[h!]\label{fig: Example 5}
  \centering
  \includegraphics[width=14cm]{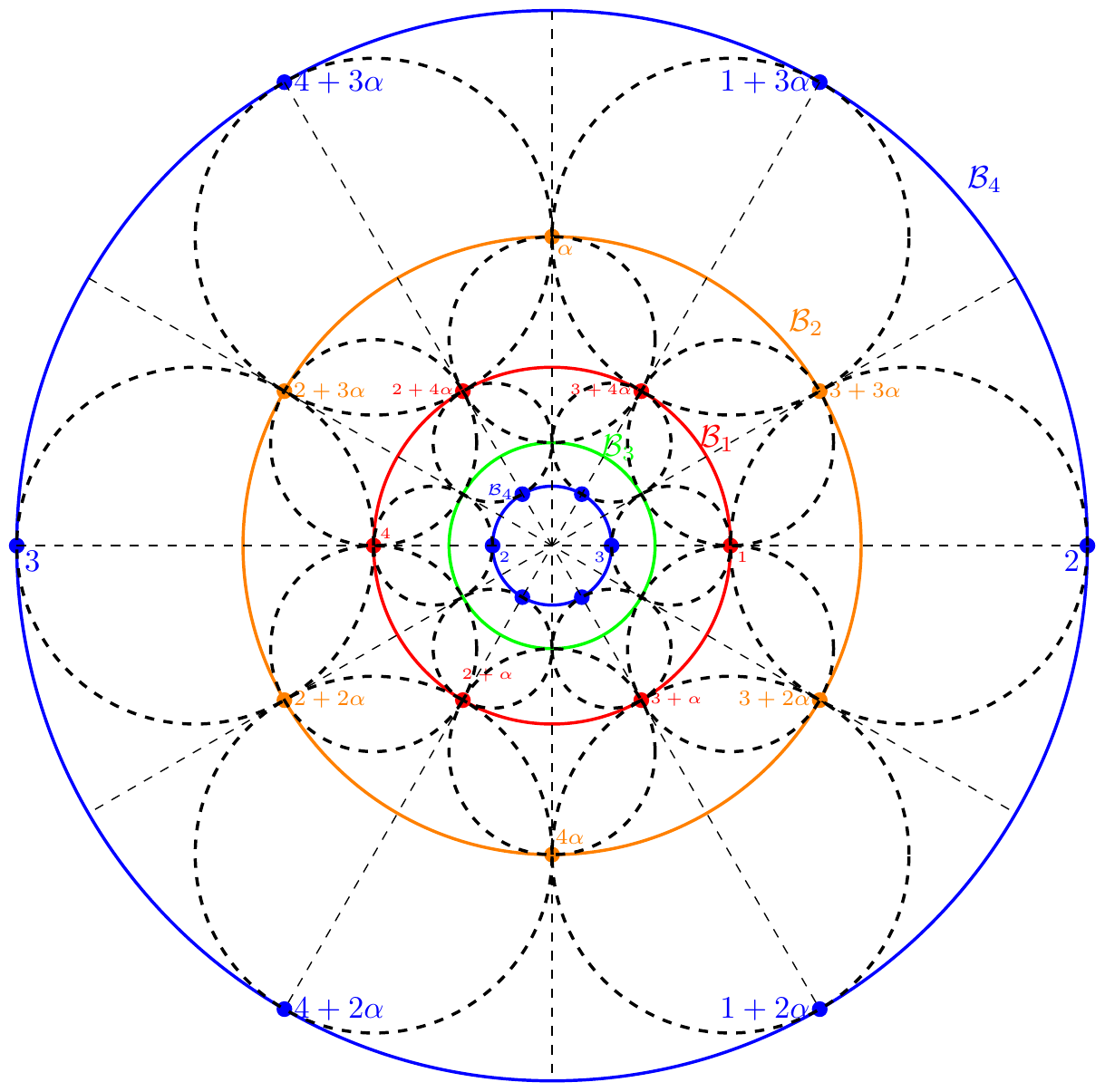}
  \caption{An example in $\bM(5)$.}
\end{figure}


\section{Some properties of concentric circles in $\bM(p^m)$}

  The example in the previous chapter suggests to look at pairs $\cB^1_{(0,a)}$ and  $\cB^1_{(0,b)}$.
  For a circle of the first type $\cB^1_{(s,c)}$, we will refer to $s$ as the \emph{center} of $\cB^1_{(s,c)}$. 
  Two circles of the first type are called \emph{concentric}, 
  if they are of the form $\cB^1_{(s,c)}$ and $\cB^1_{(s,c')}$ 
  for $s \in GF(p^{2m})$ and $c,c' \in GF(p^m) \setminus \{0\} $.
  Without loss of generality, we can always assume that two concentric circles have center $0$, 
  since the M\"obius transformation
    $$ \Phi \colon \bM(p^{m}) \rightarrow \bM(p^{m}), \ \Phi(z)=z-s $$
  maps any two concentric circles with center $s$ to two concentric circles with center $0$.
  In this chapter we henceforth consider two concentric circles with center $0$, 
  i.e.\ circles $\cB^1_{(0,a)}$ and $\cB^1_{(0,b)}$ for $a,b \in GF(p^m) \setminus\{0\}$. 
  For notational convenience, let us define
    $$ \cB_a:=\cB^1_{(0,a)}$$
  for all $a \in GF(p^m) \setminus\{0\}$.

  \begin{lemma}  \label{tangentConcentric}    There  are  $p^{2m}-1$
    circles tangential to $\cB_a$,  namely $(p^m+1)(p^m-2)$ circles of the
    first  type  and  $p^m+1$  circles  of  the  second  type.    In
    particular, $\cB^1_{(s,c)}$  is tangential to $\cB_a$  if and only
    if
    \begin{equation}\label{tang1}
      (c-a-s\overline{s})^2 = 4s\overline{s}a
    \end{equation}
  and $\cB^2_{(s,c)}$ is tangential to $\cB_a$ if and only if 
    \begin{equation}\label{tang2}
      c^2 = 4s\overline{s}a.
    \end{equation}
\end{lemma}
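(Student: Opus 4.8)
The plan is to reduce both tangency questions to a single problem: intersecting $\cB_a$ with a circle of the second type, which in coordinates is a line meeting a conic. I would first fix the description $GF(p^{2m}) = GF(p^m)(\alpha)$ with $\alpha^2 = n$ for a non-square $n \in GF(p^m)$, so that conjugation is $\overline{x+\alpha y} = x - \alpha y$ and $\cB_a$ becomes the conic $x^2 - ny^2 = a$. Writing $s = p + \alpha q$, one has $s\overline{s} = p^2 - nq^2$, and a second-type equation $\overline{s}z + s\overline{z} = d$ turns into the affine line $px - nqy = d/2$ (the extra point $\infty$ never lies on $\cB_a$, so it is irrelevant).

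The key reduction is that the common points of $\cB_a$ and $\cB^1_{(s,c)}$ are the same as those of $\cB_a$ and a second-type circle: subtracting $(z-s)(\overline{z}-\overline{s}) = c$ from $z\overline{z} = a$ and using the latter replaces the pair by the equivalent system $z\overline{z} = a$, $\overline{s}z + s\overline{z} = a + s\overline{s} - c$. Thus both questions amount to intersecting the conic with a line $px - nqy = d/2$, where $d = c$ for the second type and $d = a + s\overline{s} - c$ for the first. Since $s \neq 0$ forces $s\overline{s} = p^2 - nq^2 \neq 0$, substituting a parametrization of the line into the conic produces a genuine quadratic in one parameter $t$ with nonzero leading coefficient. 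I would then compute its discriminant and expect it to collapse, once the base point is taken on the line, to $\Delta_t = n\,(d^2 - 4as\overline{s})$. A quadratic over $GF(p^m)$ has $0$, $1$, or $2$ roots according as its discriminant is a non-square, zero, or a nonzero square; hence the two circles meet in exactly one point precisely when $d^2 = 4as\overline{s}$. Feeding in the two values of $d$ yields (\ref{tang1}) and (\ref{tang2}).

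For the enumeration I would count solutions of the tangency equations. For the second type, $c^2 = 4as\overline{s}$ forces $c \neq 0$; since $s \mapsto s\overline{s}$ is $(p^m+1)$-to-one onto $GF(p^m)^*$ and exactly half of $GF(p^m)^*$ makes $4as\overline{s}$ a square, there are $2\cdot\tfrac{p^m-1}{2}(p^m+1) = p^{2m}-1$ admissible pairs $(s,c)$ with $s \neq 0$; dividing by the scaling group $GF(p^m)^*$ of order $p^m-1$ leaves $p^m+1$ circles. For the first type I would count pairs $(s,c)$ with $(c - a - s\overline{s})^2 = 4as\overline{s}$. The case $s = 0$ yields only $(0,a) = \cB_a$, which is not tangent to itself and is discarded; for $s \neq 0$ the same square-counting gives $p^{2m}-1$ pairs, from which I remove the $p^m+1$ pairs with $c = 0$ (these occur exactly when $s\overline{s} = a$). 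This leaves $(p^{2m}-1) - (p^m+1) = (p^m+1)(p^m-2)$ circles of the first type, and the grand total is $(p^m+1)(p^m-2) + (p^m+1) = p^{2m}-1$.

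The main obstacle I anticipate is twofold. First, in the tangency computation one must resist counting roots of the naive quadratic $\overline{s}z^2 - dz + as = 0$ over $GF(p^{2m})$: because $d^2 - 4as\overline{s}$ lies in $GF(p^m)$ and every element of $GF(p^m)$ is a square in $GF(p^{2m})$, that quadratic always has two roots when $d^2 \neq 4as\overline{s}$, yet the circles may still be disjoint. The conjugacy constraint $\overline{z} = z^{p^m}$ must be respected, and this is exactly what the coordinate (real-discriminant) computation enforces. Second, the first-type count is delicate precisely because of the two families of spurious pairs, namely those with $c = 0$ and the self-pair $(0,a)$; keeping these exclusions straight, and verifying that they are disjoint, is where an error is most likely to creep in.
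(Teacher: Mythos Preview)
Your argument is correct, and it takes a genuinely different route from the paper's proof. The paper simply eliminates $\overline z$ via $\overline z=a/z$, writes down the quadratic $\overline{s}z^{2}+(c-a-s\overline s)z+sa=0$ in $z$, and declares tangency exactly when its discriminant vanishes; the counting is then dismissed as ``a standard counting argument''. Your concern about this shortcut is well placed: that quadratic lives over $GF(p^{2m})$, where every element of $GF(p^m)$ is a square, so the naive root count does not by itself distinguish ``two intersection points'' from ``disjoint''. The paper's conclusion is nevertheless correct, because the involution $z\mapsto a/\overline z$ permutes the roots and its fixed points are exactly the points of $\cB_a$; hence a double root must lie on $\cB_a$, while two distinct roots are either both on $\cB_a$ or both off it. Your approach sidesteps this entirely: by passing to real coordinates $(x,y)\in GF(p^m)^2$ and reducing both circle types, via the subtraction trick, to a single line--conic intersection over $GF(p^m)$, you get a genuine quadratic over the base field whose discriminant $n(d^{2}-4as\overline s)$ governs the trichotomy directly. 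You also carry out the enumeration explicitly, including the exclusion of the self-circle $(s,c)=(0,a)$ and of the spurious pairs with $c=0$ (equivalently $s\overline s=a$), which the paper omits. In short, the paper's proof is shorter but leans on an unstated justification; yours is longer but self-contained, and the unified treatment of the two circle types is a clean bonus.
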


\begin{proof} 
  Let us start with circles of the first type given by $\cB^1_{(s,c)}: (z-s)(\overline{z}-\overline{s}) = c$ 
  for $s \in GF(p^{2m})$ and $c \in GF(p^m) \backslash \{0\}$. 
  Intersecting $\cB_a$ with $\cB^1_{(s,c)}$ leads to a quadratic equation in $z$, given by
    \begin{equation} \label{eq: intersect B_a with circle of first type}
      \overline{s}z^2 +(c-a-s\overline{s}) z + sa = 0.
    \end{equation}
  For $\cB^1_{(s,c)}$ being tangential to $\cB_a$, the discriminant needs to vanish, which leads to
    \begin{equation*}
      (c-a-s\overline{s})^2 = 4s\overline{s}a.
    \end{equation*}
  A standard counting argument shows that there are $(p^m+1)(p^m-2)$ 
  circles $\cB^1_{(s,c)}$ satisfying \eqref{tang1}.
  Similarly for circles of the second type.
\end{proof}

  Our main goal is to investigate Steiner chains in $\bM(p^m)$. 
  Starting with two concentric circles $\cB_a$ and $\cB_b$, 
  we need circles which are tangential to both of them.
  Those circles can easily be deduced using Lemma \ref{tangentConcentric}, 
  as we will see in the following.
  
\begin{corollary} \label{cor: common tangents} 
  If $\frac{b}{a}\neq 1$ is a square in $GF(p^m)$, 
  then the two circles $\cB_a$ and $\cB_b$ have exactly $2(p^m+1)$ common tangent circles.
  In particular, they are given by circles $\cB^1_{(s,c)}$ which satisfy
    \begin{equation} \label{commTang}
      \left( \frac{c-b-s\overline{s}}{c-a-s\overline{s}} \right) ^2 = \frac{b}{a}.
    \end{equation}
  If $\frac{b}{a }$ is a non-square in $GF(p^m)$, 
  then the circles $\cB_a$ and $\cB_b$ do not have any common tangent circles.
\end{corollary}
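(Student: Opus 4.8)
The plan is to separate the common tangent circles by type and feed the tangency criteria \eqref{tang1} and \eqref{tang2} of Lemma \ref{tangentConcentric} into the two concentric circles simultaneously. A first-type circle $\cB^1_{(s,c)}$ is a common tangent of $\cB_a$ and $\cB_b$ exactly when both
\begin{align*}
(c-a-s\overline{s})^2 &= 4s\overline{s}\,a, & (c-b-s\overline{s})^2 &= 4s\overline{s}\,b
\end{align*}
hold. Dividing the second relation by the first immediately produces \eqref{commTang}; the denominator $c-a-s\overline{s}$ cannot vanish, since otherwise the first equation would force $s\overline{s}=0$, which is excluded for the solutions found below. This already shows that every first-type common tangent satisfies \eqref{commTang}.

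First I would reduce the system to the base field. Both equations depend on $s$ only through $t:=s\overline{s}\in GF(p^m)$, so with $u:=c-t$ they read $(u-a)^2=4ta$ and $(u-b)^2=4tb$. Subtracting and dividing by $b-a\neq 0$ yields the linear relation $2u=a+b-4t$; conversely a short computation shows that this relation together with $(u-a)^2=4ta$ forces $(u-b)^2=4tb$, so the pair of quadratics is equivalent to one quadratic plus the linear relation. Substituting $u=\tfrac12(a+b)-2t$ into $(u-a)^2=4ta$ gives a single quadratic in $t$,
\begin{equation*}
16t^2-8(a+b)t+(b-a)^2=0,
\end{equation*}
with discriminant $256\,ab$. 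Since $256$ is a square and $ab=a^2\cdot\tfrac{b}{a}$, this discriminant is a square in $GF(p^m)$ if and only if $\tfrac{b}{a}$ is. Hence if $\tfrac{b}{a}$ is a non-square there is no root $t\in GF(p^m)$ and no first-type common tangent, while if $\tfrac{b}{a}=\mu^2$ is a square there are two roots $t_{\pm}=\tfrac14 a(1\pm\mu)^2$, distinct and nonzero because $b\neq a$ gives $\mu\neq\pm1$. Each root determines $c=u+t=\tfrac12(a+b)-t$ uniquely, and one checks $c_{\pm}=t_{\mp}\neq0$, so $c\in GF(p^m)\setminus\{0\}$ as a genuine first-type circle requires.

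The counting step uses the finite-field structure: the norm $N\colon GF(p^{2m})^{\ast}\to GF(p^m)^{\ast}$ is a surjective homomorphism of cyclic groups with kernel of order $\tfrac{p^{2m}-1}{p^m-1}=p^m+1$, so every nonzero value of $t$ is the norm of exactly $p^m+1$ elements $s$. Thus each admissible value $t_{\pm}$ contributes $p^m+1$ circles, giving $2(p^m+1)$ first-type common tangents. To finish I would rule out second-type common tangents: by \eqref{tang2} a circle $\cB^2_{(s,c)}$ tangent to both would satisfy $c^2=4s\overline{s}\,a=4s\overline{s}\,b$, and since $s\neq0$ forces $s\overline{s}\neq0$ this gives $a=b$, contradicting $\tfrac{b}{a}\neq1$; so there are no second-type common tangents in either case.

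I expect the main obstacle to be the bookkeeping of the counting step: verifying that the full system over $GF(p^{2m})\times GF(p^m)$ genuinely collapses to the two base-field unknowns $t$ and $c$, confirming the equivalence of the reduced system with the original pair so that no spurious or missing solutions arise, and using the fiber size $p^m+1$ of the norm map to conclude that the two values $t_{\pm}$ produce exactly $2(p^m+1)$ distinct admissible circles. The algebra yielding the clean discriminant $256\,ab$ and the identities $t_{\pm}=\tfrac14 a(1\pm\mu)^2$, $c_{\pm}=t_{\mp}$ is routine.
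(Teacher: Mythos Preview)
Your argument is correct. Both you and the paper start by dividing the two instances of~\eqref{tang1} to obtain~\eqref{commTang} and by ruling out second-type circles via~\eqref{tang2}, but the counting is organized differently. The paper fixes $c$ and considers the map $s\mapsto \frac{c-b-s\overline{s}}{c-a-s\overline{s}}$ on $GF(p^{2m})\setminus\{0\}$; since this factors through the norm followed by a fractional linear map on $GF(p^m)$, a pigeonhole argument gives $p^m+1$ preimages for each of the two values $\pm\sqrt{b/a}$. You instead collapse the system to the base field by writing everything in terms of $t=s\overline{s}$ and $u=c-t$, observe that subtracting the two tangency conditions yields the linear constraint $2u=a+b-4t$, and then reduce to a single quadratic $16t^2-8(a+b)t+(b-a)^2=0$ whose discriminant $256\,ab$ immediately decides the square/non-square dichotomy. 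The fiber size $p^m+1$ of the norm map then gives the count.

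Your route buys two things. First, it makes explicit that the full system in $(s,c)$ is equivalent (not merely implied by) the reduced system, so no spurious solutions to~\eqref{commTang} alone are being counted; the paper's argument is terser on this point. Second, you obtain the actual values $t_\pm=\tfrac14 a(1\pm\mu)^2$ and $c_\pm=t_\mp$, which the paper only derives later (in Corollary~\ref{cor: two groups of tangent circles} for $a=1$). The paper's approach, on the other hand, packages the counting into a single surjectivity statement without having to solve anything explicitly.
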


\begin{proof} 
  By Equation \eqref{tang1} the tangent circles of $\cB_a$ and $\cB_b$ are given by 
  $(c-a-s\overline{s})^2 = 4s\overline{s}a$ and 
  $(c-b-s\overline{s})^2 = 4s\overline{s}b$, respectively. 
  Combining these two equations, the conditions \eqref{commTang} and 
  $\frac{b}{a}$ being a square are immediate. 

  Let us now fix $c \in GF(p^m)\backslash\{0\}$ and look at
    $$ \mu: GF(p^{2m}) \rightarrow GF(p^m),\ s \mapsto \left( \frac{c-b-s\overline{s}}{c-a-s\overline{s}}\right)  - \sqrt{\frac{b}{a}}.$$
  For the zeros of $\mu(z)$, we may consider 
   $$ (c-b-s\overline{s})  - \sqrt{\frac{b}{a}}\ ({c-a-s\overline{s}}),$$
  since $(c-a-s\overline{s}) \neq 0$.
  This is a polynomial of degree $p^m+1$ and hence has at most $p^m+1$ zeros. 
  Note that by Lemma \ref{tangentConcentric}, we can exclude $s=0$. 
  When considering 
    $$ \mu^*: GF(p^{2m})\backslash \{0\} \rightarrow GF(p^m)\backslash \{0\},\ s \mapsto \left( \frac{c-b-s\overline{s}}{c-a-s\overline{s}}\right),$$
  by the pigeonhole principle every image is obtained $\frac{p^{2m}-1}{p^m-1}=p^m+1$ times. 
  Hence, $\mu$ has exactly $p^m+1$ zeros and (\ref{commTang}) has $2(p^m+1)$ solutions.

  Moreover, equation \eqref{tang2} directly implies that no common tangent circles of the second type are possible.
\end{proof}

  For $a \in GF(p^m)\setminus\{0\}$, let
    $$ \tau(a) := \{g \in \mathbb{B}: |\cB_a \cap g|=1 \} $$
  denote the set of all tangent circles of $\cB_a$ and
    $$ \tau(a,b) := \tau(a) \cap \tau(b) $$
  the set of all common tangent circles of $\cB_a$ and $\cB_b$. 
  
  Note that for every point $P \in \cB_a$, also $-P \in \cB_a$. 
  There is a circle of the second type through $P$, $-P$ and $0$, 
  which could be thought of as the line through the center of $\cB_a$ and two of its points. 
  Because of that, we will refer to $-P$ as the \emph{opposite point of $P$ on $\cB_a$}. 
  Moreover, $P$ and $-P$ are called \emph{opposite points}.

  For any $P \in \cB^1_{(s,c)}$ we have $-P \in \cB^1_{(-s,c)}$
  and we call $\cB^1_{(-s,c)}$ the \emph{opposite} circle of $\cB^1_{(s,c)}$.
  Let $\frac{b}{a}$ be a square in $GF(p^m)$.
\begin{lemma}
  The circles $\cB^1_{(s_1,c_1)}$ and $\cB^1_{(s_2,c_2)}$ in $\tau(a,b)$
  are tangential to the same point $P \in \cB_a$ or to opposite points $P$ and $-P$ on $\cB_a$, respectively, if and only if
    \begin{equation} \label{eq: tangent to same or opposite points}
      s_1 \overline{s_2} \in GF(p^m) \setminus\{0\}.
    \end{equation}
  In particular, $P$ is given by
    \begin{equation} \label{eq: P}
      \frac{P^2}{a} = \frac{s_1}{\overline{s_1}} =\frac{s_2}{\overline{s_2}}.
    \end{equation}
\end{lemma}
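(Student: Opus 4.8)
The plan is to locate the tangency point of a common tangent circle on $\cB_a$ explicitly, and then read off the stated criterion purely algebraically.

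First I would use the computation from the proof of Lemma \ref{tangentConcentric}: intersecting $\cB_a$ with $\cB^1_{(s,c)}$ yields the quadratic $\overline{s}z^2 + (c-a-s\overline{s})z + sa = 0$. Since $\cB^1_{(s,c)} \in \tau(a,b) \subseteq \tau(a)$, the circle is tangential to $\cB_a$, so the discriminant vanishes (this is precisely \eqref{tang1}) and the unique contact point $P$ is the double root. By Vieta's formula the product of the two coinciding roots equals $\frac{sa}{\overline{s}}$, whence
$$ P^2 = \frac{sa}{\overline{s}}, \qquad\text{equivalently}\qquad \frac{P^2}{a} = \frac{s}{\overline{s}}. $$
Here $s\neq 0$, since for $s=0$ the tangency condition \eqref{tang1} forces $c=a$ and $\cB^1_{(s,c)}=\cB_a$, so the expression is well-defined. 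Applying this identity to both $\cB^1_{(s_1,c_1)}$ and $\cB^1_{(s_2,c_2)}$ yields the ``in particular'' assertion \eqref{eq: P} as soon as the two circles are known to touch at the same or opposite points.

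Next I would translate the geometric condition into algebra. Writing $P_1,P_2$ for the two contact points, the circles touch $\cB_a$ at the same point or at opposite points exactly when $P_1 = \pm P_2$, i.e.\ when $P_1^2 = P_2^2$. By the formula above this is equivalent to $\frac{s_1}{\overline{s_1}} = \frac{s_2}{\overline{s_2}}$, and cross-multiplying (legitimate since $s_1,s_2,\overline{s_1},\overline{s_2}$ are all nonzero) this becomes $s_1\overline{s_2} = \overline{s_1}\,s_2$. I would then invoke the conjugation rules from the preliminaries: using $\overline{\overline{s_2}}=s_2$ and $\overline{z_1 z_2}=\overline{z_1}\,\overline{z_2}$ gives $\overline{s_1\overline{s_2}} = \overline{s_1}\,s_2$, so the relation $s_1\overline{s_2} = \overline{s_1}\,s_2$ says exactly that $s_1\overline{s_2}$ is fixed by conjugation, i.e.\ $s_1\overline{s_2}\in GF(p^m)$, and it is nonzero because $s_1,s_2\neq 0$. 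Reading this chain of equivalences in both directions establishes \eqref{eq: tangent to same or opposite points}.

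I do not expect a genuine obstacle here; the argument is short once the contact point is pinned down. The one point requiring care is the identification of $P$ as the \emph{double} root together with the observation that the map $P\mapsto P^2$ only determines $P$ up to sign. This sign ambiguity is not a defect but precisely the reason the statement must allow both the same-point and the opposite-point case: the single algebraic condition $\frac{s_1}{\overline{s_1}}=\frac{s_2}{\overline{s_2}}$ captures $P_1=P_2$ and $P_1=-P_2$ simultaneously, matching the observation that $-P$ is the opposite point of $P$ on $\cB_a$.
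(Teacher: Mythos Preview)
Your proof is correct and follows essentially the same approach as the paper: derive the key relation $P^2/a = s/\overline s$ from the tangency quadratic with vanishing discriminant, then translate $P_1^2=P_2^2$ into the condition $s_1\overline{s_2}=\overline{s_1\overline{s_2}}$. The only cosmetic difference is that you obtain $P^2$ directly via Vieta's product-of-roots formula, whereas the paper computes the double root $z=\frac{-c+a+s\overline s}{2\overline s}$ explicitly and then squares; both lead to the same identity and the rest of the argument is identical.
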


\begin{proof}
  By Lemma \ref{tangentConcentric}, we have $(c-a-s\overline{s})^2 = 4s\overline{s}a$,
  i.e.\ the discriminant in \eqref{eq: intersect B_a with circle of first type} vanishes, 
  which directly leads to $ z = \frac{-c+a+s\overline{s}}{2 \overline{s}}$ and furthermore to
    \begin{equation} \label{eq: tangent point}
      \frac{z^2}{a} = \frac{s}{\overline{s}}.
    \end{equation}
  Now, let $\cB^1_{(s_1,c_1)}$ and $\cB^1_{(s_2,c_2)}$ in $\tau(a,b)$ be tangential to the same point $P \in \cB_a$ 
  or to opposite points $P$ and $-P$ on $\cB_a$. 
  By \eqref{eq: tangent point}, this gives
    $$ \frac{P^2}{a} = \frac{s_1}{\overline{s_1}} = \frac{s_2}{\overline{s_2}} $$
  and hence $ s_1 \overline{s_2} = \overline{s_1} s_2 = \overline{s_1 \overline{s_2}}$,
  which means $s_1 \overline{s_2} \in GF(p^m) \setminus\{0\}$.

  Conversely, let $\cB^1_{(s_1,c_1)}$ and $\cB^1_{(s_2,c_2)}$ in $\tau(a,b)$ be tangential to $\cB_a$ in $P$ and $Q$, respectively. 
  Then $s_1 \overline{s_2} \in GF(p^m) \setminus\{0\}$ and \eqref{eq: tangent point} imply $P^2 = Q^2$.
\end{proof}

\begin{lemma} \label{lemma: tangent points}
  Let $\mu^2=\frac{b}{a}\neq 1$ be a square in $GF(p^m)$. 
  Consider $\cB^1_{(s,c)} \in \tau(a,b)$ and $P \in \cB_a \cap \cB^1_{(s,c)}$. 
  Then $\cB_b \cap \cB^1_{(s,c)}$ is either $\mu P$ or $-\mu P$.
\end{lemma}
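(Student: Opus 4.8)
The plan is to apply the tangent-point identity \eqref{eq: tangent point} twice, once for each of the two concentric circles, and to exploit the fact that its right-hand side depends only on $s$ and not on the radius. Since $\cB^1_{(s,c)} \in \tau(a,b)$, the same circle is tangential to both $\cB_a$ and $\cB_b$, and the crucial observation is that it carries the \emph{same} parameter $s$ in both tangency computations.

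First I would record the tangency with $\cB_a$. Because $\cB^1_{(s,c)} \in \tau(a)$, equation \eqref{eq: tangent point} yields that the tangent point $P \in \cB_a \cap \cB^1_{(s,c)}$ satisfies
$$\frac{P^2}{a} = \frac{s}{\overline{s}}.$$
Next I would run the identical computation for $\cB_b$: intersecting $\cB_b$ with $\cB^1_{(s,c)}$ produces the quadratic $\overline{s}z^2 + (c-b-s\overline{s})z + sb = 0$, and since $\cB^1_{(s,c)} \in \tau(b)$ its discriminant vanishes. Hence the unique point $Q := \cB_b \cap \cB^1_{(s,c)}$ is the double root, so $Q^2 = \frac{sb}{\overline{s}}$, i.e.
$$\frac{Q^2}{b} = \frac{s}{\overline{s}}.$$

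Combining the two displays, the common value $\frac{s}{\overline{s}}$ cancels and gives $\frac{P^2}{a} = \frac{Q^2}{b}$, hence $Q^2 = \frac{b}{a}P^2 = \mu^2 P^2$. Since $P \in \cB_a$ forces $P\overline{P} = a \neq 0$, we have $P \neq 0$, and therefore $Q = \mu P$ or $Q = -\mu P$, which is exactly the claim. There is no genuine obstacle here: the whole argument is a reuse of the already-established identity \eqref{eq: tangent point}. The only point deserving a moment's care is the observation that $s$ is shared across the two intersection computations — it is one and the same circle $\cB^1_{(s,c)}$ being intersected with each of the two concentric circles — so that the radius-independent quantity $s/\overline{s}$ links the two tangent points directly.
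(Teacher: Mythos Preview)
Your proof is correct but proceeds along a different line than the paper's. The paper starts from $P\in\cB^1_{(s,c)}$, rewrites this as $-s\overline P-\overline sP=c-a-s\overline s$, and then invokes the common-tangent relation \eqref{commTang} from Corollary~\ref{cor: common tangents} to obtain $(-s\mu\overline P-\overline s\mu P)^2=(c-b-s\overline s)^2$; taking a square root shows that $\mu P$ or $-\mu P$ lies on $\cB^1_{(s,c)}$, and since both lie on $\cB_b$ the conclusion follows. Your argument instead applies the tangent-point identity \eqref{eq: tangent point} from the proof of the preceding lemma twice---once with radius $a$ and once with radius $b$---and eliminates the radius-independent quantity $s/\overline s$ to compare $P$ and $Q$ directly. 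This is a bit cleaner: it bypasses the common-tangent condition \eqref{commTang} entirely and reads off $Q^2=\mu^2P^2$ from Vieta's formula for the double root, whereas the paper's route has the advantage of being self-contained without citing the previous lemma's internal computation.
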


\begin{proof} 
  Let $P \in \cB_a \cap \cB^1_{(s,c)}$, i.e.\ $ a -s\overline{P}-\overline{s}P+s\overline{s}=c$.
  Reformulating gives
    $$ (-s\overline{P}-\overline{s}P)=(c-a-s\overline{s}).$$
  Since $\cB^1_{(s,c)}$ is a common tangent circle of $\cB_a$ and $\cB_b$, we can use \eqref{commTang} and obtain
    $$ (-s\overline{P}-\overline{s}P)^2=\frac{a}{b} (c-b-s\overline{s})^2,$$
  which leads to
    $$ (-s\mu \overline{P}-\overline{s}\mu P)^2= (c-b-s\overline{s})^2.$$
  Hence, by the same reformulation as above, 
  either $\mu P \in \cB^1_{(s,c)}$ or $-\mu P \in \cB^1_{(s,c)}$. 
  Since $\mu P$ and $-\mu P$ are both on $\cB_b$, we are done.
\end{proof}

Combining the results above, we can now state and prove the following result about common tangent circles of two concentric circles.

\begin{theorem} \label{th: exactly two tangent circles}
  Let $\mu^2=\frac{b}{a}\neq 1$ be a square in $GF(p^m)$. 
  For every point $P$ on $\cB_a$ there are exactly two circles $g,h \in \tau(a,b)$ tangential to $\cB_a$ in $P$. 
  Moreover, $g$ is tangential to $\cB_b$ in $\mu P$ and $h$ is tangential to $\cB_b$ in $-\mu P$.
\end{theorem}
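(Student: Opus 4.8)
The plan is to combine the two lemmas immediately preceding the theorem. By Corollary~\ref{cor: common tangents}, since $\frac{b}{a}=\mu^2\neq 1$ is a square, the circles $\cB_a$ and $\cB_b$ have common tangent circles at all, and by Lemma~\ref{lemma: tangent points} every such circle $\cB^1_{(s,c)}\in\tau(a,b)$ meets $\cB_b$ in either $\mu P$ or $-\mu P$, where $P$ is its tangent point on $\cB_a$. So the whole content of the theorem is to show that, for a \emph{fixed} $P\in\cB_a$, there are exactly two circles in $\tau(a,b)$ tangent to $\cB_a$ at $P$, and that the two distinct possibilities for the $\cB_b$-tangent point, $\mu P$ and $-\mu P$, are each realized by exactly one of them.

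First I would count how many circles in $\tau(a,b)$ are tangent to $\cB_a$ at a given $P$. By Corollary~\ref{cor: common tangents} there are $2(p^m+1)$ circles in $\tau(a,b)$ in total, and each is tangent to $\cB_a$ at a single point of $\cB_a$. The natural move is to argue that these $2(p^m+1)$ circles are distributed evenly over the $p^m+1$ points of $\cB_a$, giving exactly two per point. I would make this precise using the tangency relation $\frac{P^2}{a}=\frac{s}{\overline{s}}$ from \eqref{eq: tangent point}: fixing $P$ fixes the ratio $\frac{s}{\overline{s}}$, hence fixes $s$ up to a factor in $GF(p^m)\setminus\{0\}$ (the kernel of $s\mapsto s/\overline{s}$ is exactly $GF(p^m)^\times$ by the norm-one argument already used in the preceding lemma). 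For each such direction of $s$, the common-tangent condition \eqref{commTang} together with $(c-a-s\overline{s})^2=4s\overline{s}a$ pins down the admissible pairs $(s,c)$, and a short count should yield precisely two circles.

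Next I would separate the two circles by their $\cB_b$-tangent points. By Lemma~\ref{lemma: tangent points} each of the two circles hits $\cB_b$ in $\mu P$ or $-\mu P$; since $\mu^2\neq 1$ forces $\mu P\neq -\mu P$ (here $p$ is odd, so $2\neq 0$, and $P\neq 0$), these are genuinely distinct points. It remains to rule out that both circles share the same $\cB_b$-tangent point. For this I would invoke axiom (M2), or equivalently uniqueness of the tangent circle through a point on a given circle and a second point: a circle in $\tau(a,b)$ tangent to $\cB_a$ at $P$ and to $\cB_b$ at $\mu P$ is determined by being tangent to $\cB_a$ at $P$ and passing through $\mu P$, hence is unique; likewise for $-\mu P$. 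Thus the two circles counted in the previous step must realize the two different points $\mu P$ and $-\mu P$, one each. Labeling them $g$ (tangent to $\cB_b$ at $\mu P$) and $h$ (tangent at $-\mu P$) then gives exactly the asserted statement.

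I expect the main obstacle to be the middle step: establishing cleanly that exactly two circles of $\tau(a,b)$ are tangent to $\cB_a$ at a prescribed $P$, rather than merely at most two or on average two. The even distribution over the $p^m+1$ points follows from a counting/pigeonhole argument in the spirit of the proof of Corollary~\ref{cor: common tangents}, but one must check there is no exceptional point of $\cB_a$ carrying a different number; symmetry under $s\mapsto -s$ (the opposite-circle involution, which sends a circle tangent at $P$ to one tangent at $-P$ on $\cB_a$, not fixing $P$) suggests the count is uniform, and I would lean on the explicit solvability of \eqref{commTang} for fixed direction of $s$ to confirm it. Once the count of two is secured, the separation via (M2) is routine.
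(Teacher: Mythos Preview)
Your proposal is correct and uses exactly the same ingredients as the paper---Lemma~\ref{lemma: tangent points}, Corollary~\ref{cor: common tangents}, and axiom~(M2)---but you deploy them in a less efficient order, which is why you perceive an obstacle that is not actually there. The paper applies (M2) \emph{first}: since any circle in $\tau(a,b)$ tangent to $\cB_a$ at $P$ must pass through either $\mu P$ or $-\mu P$ on $\cB_b$ (Lemma~\ref{lemma: tangent points}), and since (M2) says there is at most one tangent circle to $\cB_a$ at $P$ through any prescribed external point, there are \emph{at most two} such circles per $P$. Then the global count of $2(p^m+1)$ common tangents over the $p^m+1$ points of $\cB_a$ forces exactly two at every point by plain pigeonhole---no algebraic analysis of the admissible $(s,c)$ for a fixed direction of $s$, and no worry about exceptional points, is needed. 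Your separation step via (M2) is then already subsumed: the ``at most two'' argument simultaneously shows that the two circles realize $\mu P$ and $-\mu P$ one each. In short, the only thing to change is to move your invocation of (M2) from the end to the beginning; the ``main obstacle'' you anticipate then evaporates.
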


\begin{proof} 
  By Lemma \ref{lemma: tangent points} we know that if $\cB^1_{(s,c)} \in \tau(a,b)$ is tangential to $\cB_a$ in $P$, 
  then $\cB^1_{(s,c)}$ is tangential to $\cB_b$ in $\mu P$ or in $-\mu P$. 
  Hence, for every point $P$ on $\cB_a$ there are at most two circles in $\tau(a,b)$ which are tangential to $\cB_a$ in $P$ 
  by the uniqueness of tangent circles, i.e.\ Axiom (M2) for M\"obius planes. 
  There are $p^m+1$ points on $\cB_a$ and by Corollary \ref{cor: common tangents},
  there are exactly $2(p^m+1)$ circles in $\tau(a,b)$. 
  Hence, for every point $P$ on $\cB_a$, there are exactly two circles in $\tau(a,b)$ tangential to $\cB_a$ in $P$.
\end{proof}

\begin{corollary} \label{cor: two groups of tangent circles} 
  Let $\mu^2=b\neq 1$ be a square in $GF(p^m)$. 
  The $2(p^m+1)$ common tangent circles of $\cB_1$ and $\cB_b$ are given by
    $$\{ \cB^1_{(sP,c)}| P\in \cB_1 \} \ \bigcup  \ \{ \cB^1_{(s'P,c')}| P\in \cB_1 \}.$$
  In particular, $P,\mu P \in \cB^1_{(sP,c)}$ for all $P \in \cB_1$ with 
    \begin{align} \label{eq: s and c}
      s = \frac{1+\mu}{2},\qquad c=\left(\frac{1-\mu}{2}\right)^2
    \end{align}
  and $P,-\mu P \in \cB^1_{(s'P,c')}$ for all $P \in \cB_1$ with
    \begin{align} \label{eq: s' and c'}
      s' = \frac{1-\mu}{2},\qquad c'=\left(\frac{1+\mu}{2}\right)^2.
    \end{align}
\end{corollary}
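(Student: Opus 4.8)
The plan is to take the explicit data $s,c,s',c'$ in the statement at face value and verify directly that $\cB^1_{(sP,c)}$ and $\cB^1_{(s'P,c')}$ are the two common tangent circles produced by Theorem \ref{th: exactly two tangent circles}; the set equality then follows by comparing with the count $|\tau(1,b)| = 2(p^m+1)$ from Corollary \ref{cor: common tangents}. Throughout I would use the two facts that make every computation collapse: a point $P \in \cB_1 = \cB^1_{(0,1)}$ satisfies $P\overline{P} = 1$, and every element of $GF(p^m)$ — in particular $\mu$, $s=\frac{1+\mu}{2}$ and $c=\left(\frac{1-\mu}{2}\right)^2$ — is fixed by conjugation. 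Hence the center $sP$ has norm $sP\,\overline{sP} = s\overline{s}\,P\overline{P} = s^2$.

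First I would check membership. Substituting $z = P$ into $(z-sP)(\overline z - \overline{sP})$ and pulling out $P\overline P = 1$ gives $(1-s)^2 = \left(\frac{1-\mu}{2}\right)^2 = c$, so $P \in \cB^1_{(sP,c)}$; substituting $z = \mu P$ gives $(\mu - s)^2 = \left(\frac{\mu-1}{2}\right)^2 = c$, so $\mu P \in \cB^1_{(sP,c)}$ as well, where I also note $\mu P \in \cB_b$ since $N(\mu P) = \mu^2 = b$. Next I would verify tangency via \eqref{tang1}: with center-norm $s^2$ one computes $c - 1 - s^2 = -(1+\mu)$ and $c - b - s^2 = -\mu(1+\mu)$, so that $(c-1-s^2)^2 = (1+\mu)^2 = 4s^2$ and $(c-b-s^2)^2 = \mu^2(1+\mu)^2 = 4s^2 b$, which are exactly the tangency conditions of Lemma \ref{tangentConcentric} for $\cB_1$ and for $\cB_b$. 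Thus $\cB^1_{(sP,c)} \in \tau(1,b)$, it meets $\cB_1$ only in $P$ and $\cB_b$ only in $\mu P$; by the uniqueness in Theorem \ref{th: exactly two tangent circles} it is the circle $g$ attached to $P$.

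The second family requires no new work: since $b = \mu^2 = (-\mu)^2$, replacing $\mu$ by $-\mu$ leaves $\cB_1,\cB_b$ unchanged and turns $(s,c)$ into exactly $(s',c') = \left(\frac{1-\mu}{2}, \left(\frac{1+\mu}{2}\right)^2\right)$, with tangent point $-\mu P$ on $\cB_b$; so $\cB^1_{(s'P,c')}$ is the circle $h$. To finish I would argue exhaustion by counting. The one point needing care is distinctness, and here I would invoke $b \neq 0,1$, which forces $\mu \notin \{0,1,-1\}$: then $s,s' \neq 0$, so each map $P \mapsto sP$ (resp.\ $P \mapsto s'P$) is injective on the $p^m+1$ points of $\cB_1$, and $c \neq c'$ (since $c=c'$ would force $\mu=0$) keeps the two families disjoint. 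This yields $2(p^m+1)$ distinct circles inside $\tau(1,b)$, which has exactly $2(p^m+1)$ elements by Corollary \ref{cor: common tangents}; hence the union is all of $\tau(1,b)$. The main obstacle is not analytic but bookkeeping — tracking conjugates correctly and securing the distinctness step from $\mu \notin \{0,\pm 1\}$ — since the algebraic identities themselves are forced once $P\overline P = 1$ is used.
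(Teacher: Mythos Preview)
Your proof is correct and follows essentially the same route as the paper: both arguments come down to verifying that $\cB^1_{(sP,c)}$ lies in $\tau(1,b)$ using $P\overline P = 1$ and then matching against the count $2(p^m+1)$ from Corollary~\ref{cor: common tangents}. The paper proceeds in the opposite order --- it first invokes Theorem~\ref{th: exactly two tangent circles} to get an abstract circle through $1$ and $\mu$, then rotates by $P$, and only at the end solves for the explicit values of $s,c$ --- whereas you take $s,c$ as given and verify directly; and the paper checks tangency via the combined condition~\eqref{commTang} rather than the two separate instances of~\eqref{tang1}. Your version is in fact slightly more complete: the paper leaves implicit the distinctness argument ($\mu\notin\{0,\pm1\}\Rightarrow s,s'\neq 0$ and $c\neq c'$) that you spell out to conclude the union exhausts $\tau(1,b)$.
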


\begin{proof}  
  By Theorem \ref{th: exactly two tangent circles}, there is a $\cB^1_{(s,c)} \in \tau(1,b)$ such that $1,\mu \in \cB^1_{(s,c)}$. 
  Then $P, \mu P \in \cB^1_{(sP,c)}$ for any point $P$ on $\cB_1$, which can easily be checked. 
  Moreover, $\cB^1_{(sP,c)}$ lies indeed in $\tau(1,b)$. 
  Using $P\overline{P}=1$ and $\cB^1_{(s,c)} \in \tau(1,b)$,
  this follows directly by the condition obtained in Corollary \ref{cor: common tangents}, namely
    $$\left( \frac{c-b-(sP)(\overline{sP})}{c-1-(sP)(\overline{sP})}\right) ^2 = b.$$
  The parameters $s,c$ and $s',c'$ can easily be deduced by using $1,\mu \in \cB^1_{(s,c)}$ and $1,-\mu \in \cB^1_{(s',c')}$.
\end{proof}
The following figure visualizes our results about common tangent circles of $\cB_a$ and $\cB_b$.

\begin{figure}[h]\label{fig: Opposite circles}
  \centering
  \includegraphics[width=7cm]{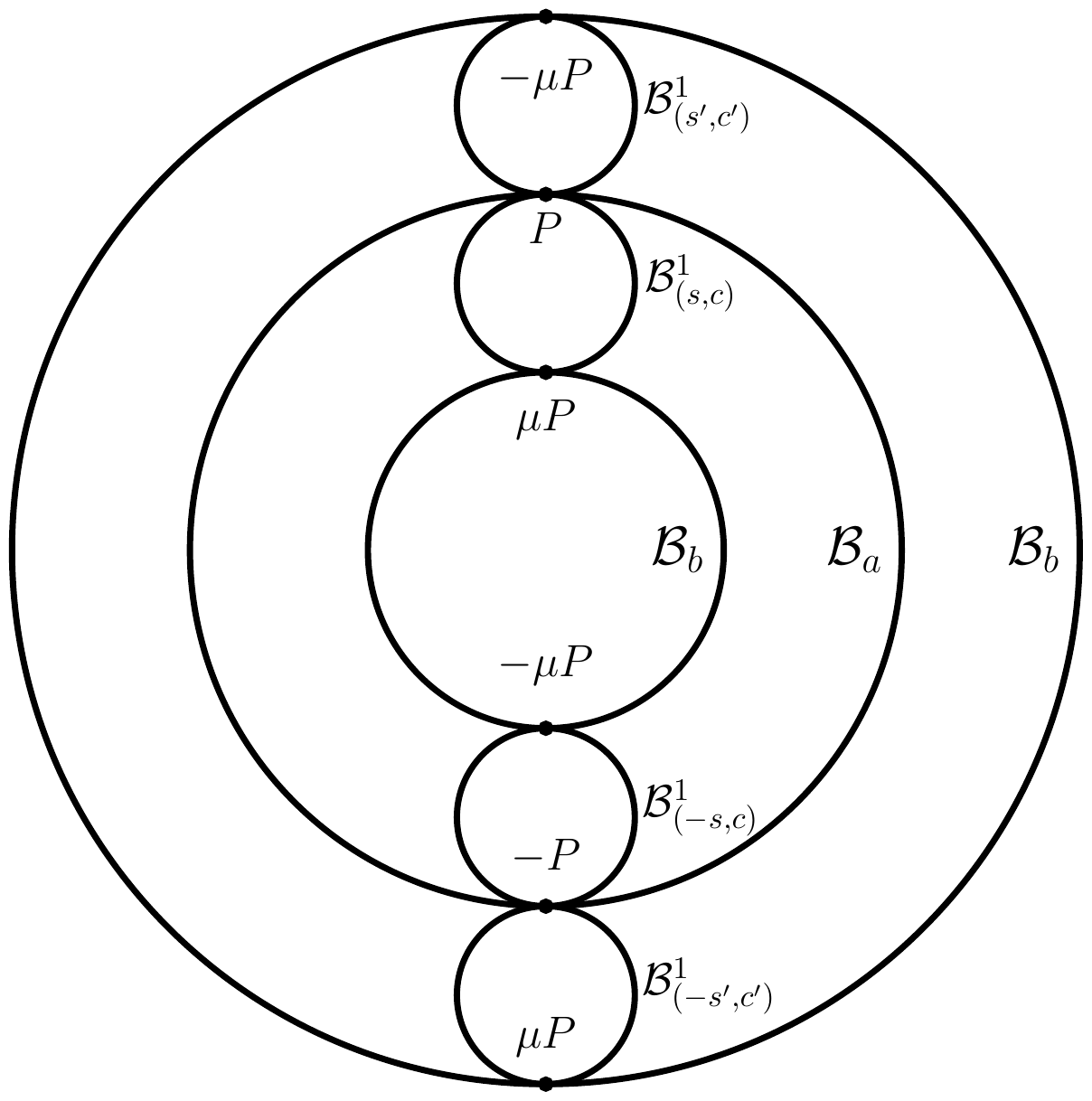}
  \caption{Common tangent circles of $\cB_a$ and $\cB_b$}
\end{figure}


\section{Steiner chains in $\bM(p^m)$}

In this section we construct Steiner chains using two concentric circles $\cB_1$ and $\cB_b$, 
for $b$ a square in $GF(p^m)$, and their common tangent circles.

Note that the following discussion already covers all cases for two concentric circles $\cB_a$ and $\cB_b$.
To see this, look at the M\"obius transformation
$$ \Phi(z) = \frac{z}{\eta}$$
for any $\eta$ in $GF(p^{2m})$ such that $\eta \overline{\eta} = a$. 
Note that such an $\eta$ always exists by the properties of the norm map.
Then $\Phi$ maps the circles $\cB_a$ and $\cB_b$ to $\cB_1$ and $\cB_{\frac{b}{a}}$, respectively.

\subsection{Steiners Theorem in $\mathbb{M}(p^m)$}

We are interested in constructing Steiner chains. 
The next result shows that  
if we find two common tangent circles of $\cB_1$ and $\cB_b$, which are also tangential to each other, 
a Steiner chain can be constructed.

\begin{lemma} \label{lemma: tangent circles are tangential}
  Let $b\neq 1$ be a square in $GF(p^m)$ and 
  consider the set $\{ \cB^1_{(sP,c)} \}$ of circles in $\tau(1,b)$ 
  for $P$ ranging through $\cB_1$.
  If
    $$ |\cB^1_{(s,c)} \cap \cB^1_{(sP,c)}| = 1,$$
  for some $P \in \cB_1$, then
    $$ |\cB^1_{(sQ,c)} \cap \cB^1_{(sPQ,c)}| = 1$$
  for all $Q \in \cB_1$.
  Moreover, $|\cB^1_{(s,c)} \cap \cB^1_{(sP,c)}| = 1$ if and only if
    \begin{equation} \label{eq: tangential circles are tangential to each other}
      s(P-1)\overline{s(P-1)} = 4c. 
    \end{equation}
\end{lemma}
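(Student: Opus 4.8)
The plan is to prove the ``moreover'' statement first, since the tangency condition \eqref{eq: tangential circles are tangential to each other} is the computational heart of the lemma, and then deduce the ``translation invariance'' statement as an easy consequence. To establish the tangency criterion, I would intersect the two circles $\cB^1_{(s,c)}$ and $\cB^1_{(sP,c)}$ directly. Both have the same radius parameter $c$, so writing out $(z-s)(\overline z-\overline s)=c$ and $(z-sP)(\overline z-\overline{sP})=c$ and subtracting the two equations eliminates the quadratic term $z\overline z$, leaving a single linear equation (a circle of the second type, i.e.\ the radical axis) relating $z$ and $\overline z$. Two distinct circles of the first type meet in the points lying on this radical line and on either circle; tangency means this intersection is a single point, i.e.\ the resulting equation has a double root.

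Concretely, after subtracting I expect a relation of the form $\overline{s}(1-\overline P)z + s(1-P)\overline z = \text{(something in } s,\overline s,P,\overline P)$, which I can solve for $\overline z$ in terms of $z$ and substitute back into $(z-s)(\overline z-\overline s)=c$ to get a quadratic in $z$ alone. The tangency condition is that the discriminant of this quadratic vanishes. The main obstacle will be carrying this substitution through cleanly without the algebra becoming unwieldy: one must use $P\overline P=1$ (since $P\in\cB_1$, so $\operatorname N(P)=1$) repeatedly to simplify, and keep track of which quantities lie in $GF(p^m)$ versus $GF(p^{2m})$. I anticipate that after simplification the discriminant condition collapses to the symmetric expression $s(P-1)\overline{s(P-1)}=4c$, where the left side is $\operatorname N(s(P-1))\in GF(p^m)$. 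A useful sanity check is that this is manifestly symmetric under conjugation and depends only on the norm of $s(P-1)$, which is exactly what one expects for a tangency condition.

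For the translation-invariance claim, I would observe that the condition \eqref{eq: tangential circles are tangential to each other} for $\cB^1_{(sQ,c)}$ and $\cB^1_{(sPQ,c)}$ reads $s(PQ-Q)\overline{s(PQ-Q)}=4c$, i.e.\ $\operatorname N\!\big(sQ(P-1)\big)=4c$. Since $\operatorname N$ is multiplicative and $\operatorname N(Q)=Q\overline Q=1$ for $Q\in\cB_1$, we have $\operatorname N\!\big(sQ(P-1)\big)=\operatorname N(Q)\operatorname N\!\big(s(P-1)\big)=\operatorname N\!\big(s(P-1)\big)$. Hence the tangency condition for the pair indexed by $Q$ is \emph{identical} to the one for the pair indexed by $1$, so if $\cB^1_{(s,c)}$ is tangent to $\cB^1_{(sP,c)}$ then $\cB^1_{(sQ,c)}$ is tangent to $\cB^1_{(sPQ,c)}$ for every $Q\in\cB_1$. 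This multiplicativity of the norm does all the work, and it is precisely what makes the Steiner chain ``march around'' the circle uniformly, which is the point the lemma is setting up for the porism. The only care needed here is to confirm that $\cB^1_{(sQ,c)}$ and $\cB^1_{(sPQ,c)}$ are indeed the correct circles in $\tau(1,b)$ tangent at the rotated points, which follows from Corollary \ref{cor: two groups of tangent circles} together with $\operatorname N(Q)=1$.
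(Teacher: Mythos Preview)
Your proposal is correct, and for the translation-invariance part it is essentially identical to the paper's argument (both use $Q\overline Q=1$ to see that $\operatorname N\!\big(sQ(P-1)\big)=\operatorname N\!\big(s(P-1)\big)$). For the tangency criterion itself, however, the paper takes a shorter route than your radical-axis computation: it applies the translation $\Phi(z)=z-s$, which sends $\cB^1_{(s,c)}$ to $\cB^1_{(0,c)}=\cB_c$ and $\cB^1_{(sP,c)}$ to $\cB^1_{(s(P-1),c)}$, and then simply invokes the already-established tangency condition of Lemma~\ref{tangentConcentric} (equation~\eqref{tang1}) with $a=c$. That condition reads $(c-c-\operatorname N(s(P-1)))^2=4c\operatorname N(s(P-1))$, which immediately collapses to $\operatorname N(s(P-1))=4c$. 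Your direct subtraction-and-substitution approach reaches the same endpoint but reproves a special case of Lemma~\ref{tangentConcentric} from scratch; the paper's translation trick avoids this duplication and keeps the algebra to one line. Likewise, for the $Q$-version the paper translates by $-sQ$ rather than recomputing, arriving at $\operatorname N(sQ(P-1))=4c$ in the same way.
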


\begin{proof} 
  By Lemma \ref{cor: two groups of tangent circles}, 
  we may consider two circles in $\tau(1,b)$ given by  $\cB^1_{(s,c)}$ and $\cB^1_{(sP,c)}$ for some $P \in \cB_1$. 
  Let us apply the M\"obius transformation $\Phi(z) = z-s$, 
  which maps $\cB^1_{(s,c)}$ to $\cB^1_{(0,c)}$ and $\cB^1_{(sP,c)}$ to $\cB^1_{(s(P-1),c)}$.
  We need $\cB^1_{(0,c)}$ and $\cB^1_{(s(P-1),c)}$ being tangential. 
  Hence, the discriminant of the quadratic equation obtained by intersecting $\cB^1_{(0,c)}$ and $\cB^1_{(s(P-1),c)}$ needs to vanish, 
  which is exactly \eqref{eq: tangential circles are tangential to each other}.

  Assume $|\cB^1_{(s,c)} \cap \cB^1_{(sP,c)}| = 1$, i.e.\ \eqref{eq: tangential circles are tangential to each other} is satisfied. 
  Let us apply the M\"obius transformation $\Phi(z)=z-sQ$ to $\cB^1_{(sQ,c)}$ and $\cB^1_{(sPQ,c)}$ 
  for some fixed $Q \in \cB_1$. 
  We consider the circles $\cB^1_{(0,c)}$ and $\cB^1_{(sQ(P-1),c)}$. 
  Again, the discriminant of the quadratic equation obtained by intersecting those two circles needs to vanish, 
  i.e.\ we obtain the condition
    $$ sQ(P-1)\overline{sQ(P-1)} = 4c.$$
  Since $Q\overline{Q}=1$, the above equation is satisfied if and only if 
  \eqref{eq: tangential circles are tangential to each other} is satisfied, which proves the claim.
\end{proof}

Now we are able to state and prove our finite version of Steiner's Theorem.
A Steiner chain of \emph{length $k$} for $\cB_1$ and $\cB_b$, $k \geq 3$, is a chain of 
$k$ different circles $T_1,\ldots,T_k$ in $\tau(1,b)$ such that $|T_i \cap T_{i+1}|=1$ for $i =1,\ldots,k-1$
and $ |T_k \cap T_1| = 1$.

\begin{theorem} \label{th: finite Steiner}
  Consider two different concentric circles $\cB_1$ and $\cB_b$. 
  Assume we can find two circles $\cB^1_{(sP,c)}$ and $\cB^1_{(sQ,c)}$ in $\tau(1,b)$ for some $P$ and $Q$ on $\cB_1$, 
  which are tangential to each other.
  Then a Steiner chain of length $k$ for some $k \in \{3,\ldots,p^m+1\}$ can be constructed.
  Moreover, a Steiner chain of the same length $k$ can be constructed starting with any circle $B$ in 
    $$\{ \cB^1_{(sR,c)}| \ R \in \cB_1 \}.$$ 
\end{theorem}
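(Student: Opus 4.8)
The plan is to exploit the multiplicative group structure of $\cB_1$ together with the step-invariance of tangency supplied by Lemma \ref{lemma: tangent circles are tangential}. First I would record that $\cB_1 = \cB^1_{(0,1)} = \{z \in GF(p^{2m}) : z\overline{z} = 1\}$ is exactly the kernel of the norm map $\operatorname{N}\colon GF(p^{2m})^{\ast} \to GF(p^m)^{\ast}$, hence a finite (in fact cyclic) group of order $\frac{p^{2m}-1}{p^m-1} = p^m+1$ under multiplication. The candidate family $\{\cB^1_{(sP,c)} : P \in \cB_1\}$ from Corollary \ref{cor: two groups of tangent circles}, with $s,c$ as in \eqref{eq: s and c}, is thus indexed by this group; by that corollary every such circle lies in $\tau(1,b)$ and meets $\cB_1$ in the single point $P$.

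Given the tangent pair $\cB^1_{(sP,c)}, \cB^1_{(sQ,c)}$, the key move is to introduce the \emph{step} $R := QP^{-1} \in \cB_1$. Tangency forces $P \neq Q$, so $R \neq 1$. Writing $sQ = (sP)R$ and applying Lemma \ref{lemma: tangent circles are tangential}, the given tangency is equivalent to condition \eqref{eq: tangential circles are tangential to each other} for the step $R$, and crucially it yields that $\cB^1_{(sX,c)}$ and $\cB^1_{(sXR,c)}$ are tangent for \emph{every} $X \in \cB_1$. I would then iterate the step and set $T_j := \cB^1_{(sR^{j-1},c)}$. Consecutive circles $T_j, T_{j+1}$ differ precisely by the factor $R$, so they are tangent by the invariance just noted, and each $T_j$ lies in $\tau(1,b)$ since $R^{j-1} \in \cB_1$.

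Now let $k := \operatorname{ord}(R)$ in $\cB_1$. The centers $sR^0, \dots, sR^{k-1}$ are pairwise distinct while $sR^k = s$, so $T_1,\dots,T_k$ are $k$ distinct circles with $T_{k+1}=T_1$; this closes the chain, and $k \mid (p^m+1)$ gives $k \le p^m+1$. The one genuinely computational point is $k \ge 3$: since $R \neq 1$ we have $k \ge 2$, and I would exclude $k=2$ (that is, $R=-1$) directly from \eqref{eq: tangential circles are tangential to each other}. Substituting $R=-1$ and using $\overline{s}=s$ (as $\mu \in GF(p^m)$) collapses the tangency condition to $s\overline{s}=c$, i.e. $\left(\tfrac{1+\mu}{2}\right)^2 = \left(\tfrac{1-\mu}{2}\right)^2$, which forces $\mu = 0$ and contradicts $\mu^2 = b \neq 0$. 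Hence $k \in \{3,\dots,p^m+1\}$ and $T_1,\dots,T_k$ is a bona fide Steiner chain. For the last claim of the theorem I would run the identical construction from an arbitrary $\cB^1_{(sR_0,c)}$, setting $T_j := \cB^1_{(sR_0R^{j-1},c)}$: the same step $R$ governs all consecutive tangencies, and the chain again closes after exactly $k = \operatorname{ord}(R)$ circles, so every starting circle yields a chain of the same length. The main conceptual weight is carried by the invariance in Lemma \ref{lemma: tangent circles are tangential}, so the only real obstacle here is the short residue computation ruling out the degenerate length $k=2$.
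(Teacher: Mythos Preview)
Your argument is correct and follows essentially the same route as the paper: reduce to a single multiplicative ``step'' on $\cB_1$, invoke Lemma~\ref{lemma: tangent circles are tangential} to propagate tangency, and close up after $k=\operatorname{ord}(R)$ circles. You are in fact slightly more careful than the paper, which simply asserts $k\in\{3,\ldots,p^m+1\}$, whereas you supply the short computation ruling out $R=-1$.
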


\begin{proof}
  Without loss of generality, we assume the existence of two circles $\cB^1_{(s,c)}$ and $\cB^1_{(sP,c)}$ in $\tau(1,b)$, 
  which are tangential to each other. 
  By choosing $P = Q$ in Lemma \ref{lemma: tangent circles are tangential}, 
  we know that $\cB^1_{(sP,c)}$ and $\cB^1_{(sP^2,c)}$ in $\tau(1,b)$ are tangential to each other as well.
  There exists a minimal $k \in \{3, \ldots, p^m+1 \}$ such that $ P^k = 1$. Hence,
    $$ |\cB^1_{(s,c)} \cap \cB^1_{(sP,c)}| = |\cB^1_{(sP,c)} \cap \cB^1_{(sP^2,c)}| = \ldots = |\cB^1_{(sP^{k-1},c)} \cap \cB^1_{(s,c)}| = 1,$$
  which gives a Steiner chain of length $k$.
  
  Now, consider an arbitrary circle $\cB^1_{(sR,c)}$ in $\tau(1,b)$ for $R$ on $\cB_1$.
  Again by Lemma \ref{lemma: tangent circles are tangential}, we get
    $$ |\cB^1_{(sR,c)} \cap \cB^1_{(sPR,c)}| = |\cB^1_{(sPR,c)} \cap \cB^1_{(sP^2R,c)}| = \ldots = |\cB^1_{(sP^{k-1}R,c)} \cap \cB^1_{(sR,c)}| = 1$$
  and the chain closes again after $k$ steps.
\end{proof}

We will call a Steiner chain for $\cB_a$ and $\cB_b$ \emph{proper}, 
if all the common tangent circles in the chain are taken from the same group of common tangent circles,
i.e.\ they are either all taken from $\{ \cB^1_{(sP,c)}| P\in \cB_1 \}$ or from $\{ \cB^1_{(s'P,c')}| P\in \cB_1 \}$,
where the parameters are given by \eqref{eq: s and c} and \eqref{eq: s' and c'}, respectively.
Otherwise, the Steiner chain is called \emph{degenerated}.
Proper Steiner chains are  equivalently characterized by the property,
that the total  number of contact points equals 3  times the length of
the chain, or that no point is contact point of more than two circles.

\begin{figure}[h!]\label{fig: proper and degenerated Steiner chain}
  \centering
  \includegraphics[width=6cm]{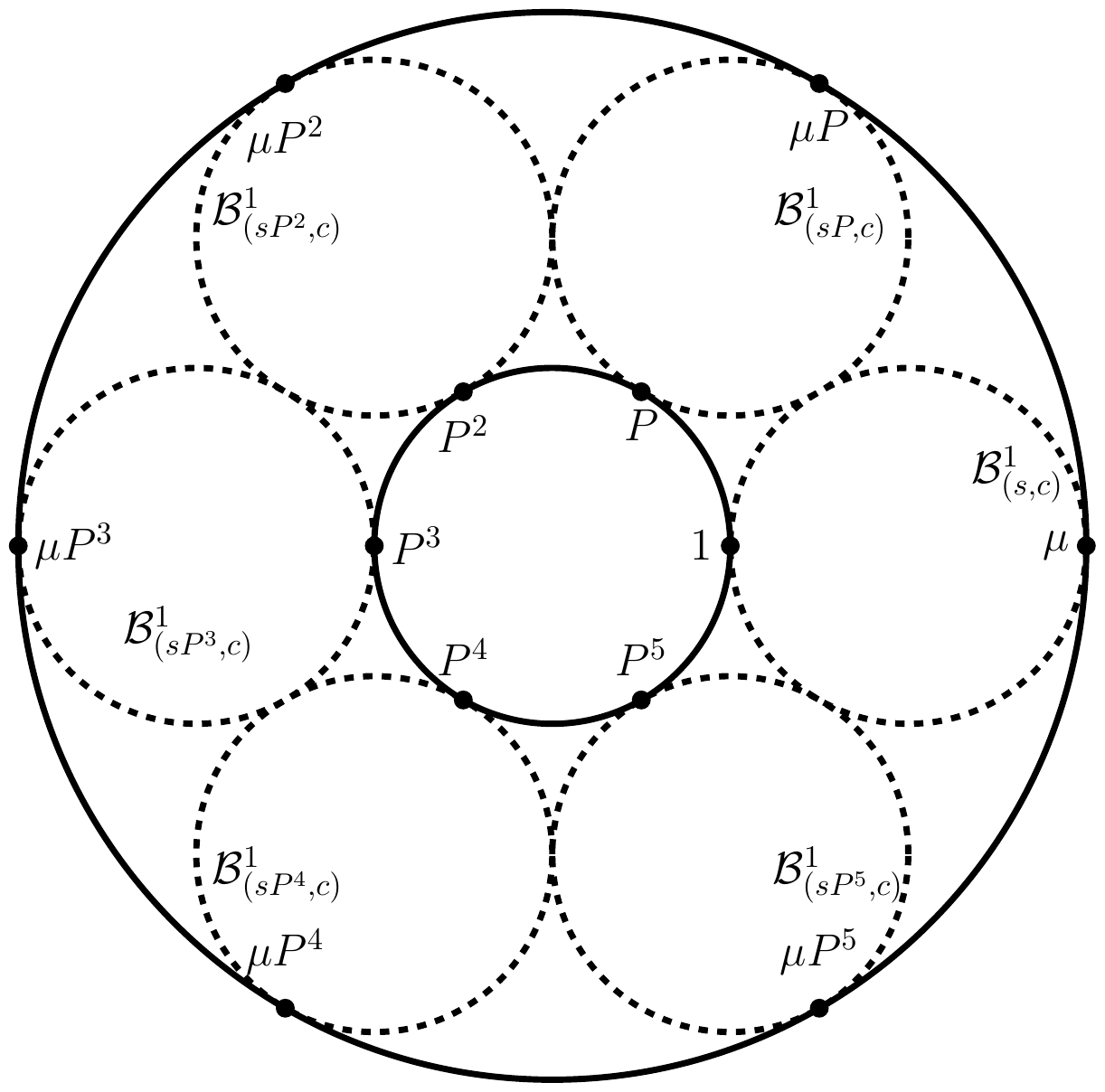} \qquad \includegraphics[width=6cm]{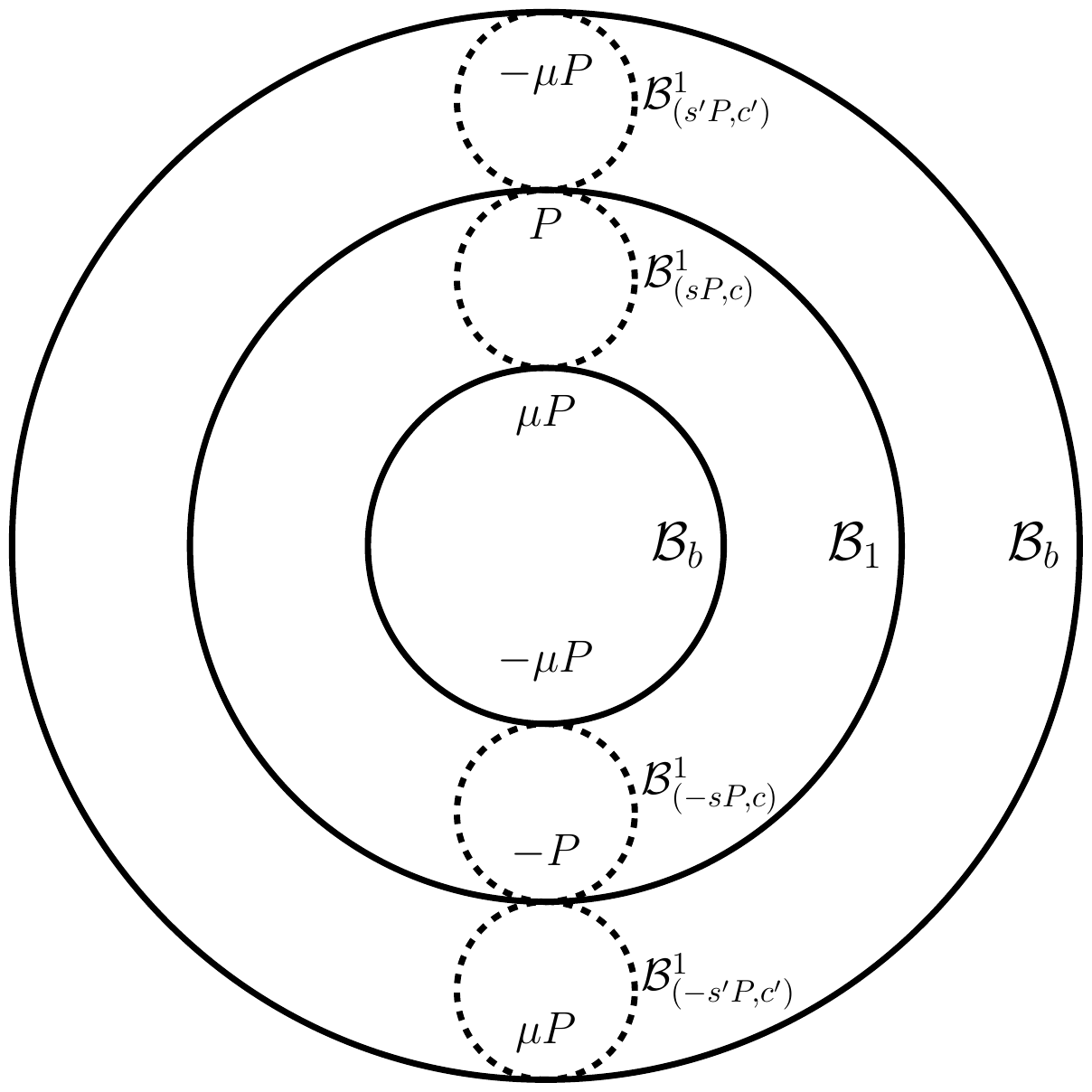}
  \caption{On the left, we have a \emph{proper} Steiner chain given by $ \cB^1_{(s,c)} \to \cB^1_{(sP,c)} \to \cB^1_{(sP^2,c)} \to \cB^1_{(sP^3,c)} \to \cB^1_{(sP^4,c)} \to \cB^1_{(sP^5,c)} \to \cB^1_{(s,c)}$ and on the right, we have a \emph{degenerate} Steiner chain given by $\cB^1_{(sP,c)} \to \cB^1_{(s'P,c)} \to \cB^1_{(-sP,c)}\to \cB^1_{(-s'P,c)} \to \cB^1_{(sP,c)}$.}
\end{figure}

\subsection{Existence and length of Steiner chains}

In what follows, we are mainly interested in the existence of proper Steiner chains 
as well as their possible lengths. 
As discussed in the previous section, 
the existence of degenerate Steiner chains for $\cB_1$ and $\cB_b$ is guaranteed as long as $b$ is a square in $GF(p^m)$.
For proper Steiner chains, however, we obtain the following criterion.

\begin{lemma} \label{lemma: P1 and P2}
  Let $\mu^2=b\neq 1$ be a square in $GF(p^m)$. 
  A proper Steiner chain for $\cB_1$ and $\cB_b$ can be constructed
  using circles in $\{ \cB^1_{(sP,c)}| \ P \in \cB_1 \}$
  for $s = \frac{1+\mu}{2}$ and $c=\left(\frac{1-\mu}{2}\right)^2$
  if and only if $- \mu$ is a non-square in $GF(p^m)$.
  
  In particular, for $-\mu $ a non-square in $GF(p^m)$,
  a proper Steiner chain for $\cB_1$ and $\cB_b$ can be constructed starting with
  $\cB^1_{(s,c)}$ and $\cB^1_{(sP_1,c)}$ and closing up with $\cB^1_{(sP_2,c)}$ for
    \begin{align} \label{eq: Tangential point P1}
      P_1 = \frac{(-\mu^2+6\mu-1) + 4(\mu-1)\sqrt{-\mu}}{(1+\mu)^2}
    \end{align}
  and 
    \begin{align} \label{eq: Tangential point P2}
      P_2 = \frac{(-\mu^2+6\mu-1) - 4(\mu-1)\sqrt{-\mu}}{(1+\mu)^2}.
    \end{align}
\end{lemma}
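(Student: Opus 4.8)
The plan is to translate the tangency condition \eqref{eq: tangential circles are tangential to each other} into an explicit equation for the tangent point $P$ on $\cB_1$, and then to show that this equation has a solution in $GF(p^m)$ precisely when $-\mu$ is a non-square. By Lemma \ref{lemma: tangent circles are tangential}, the two circles $\cB^1_{(s,c)}$ and $\cB^1_{(sP,c)}$ are tangential if and only if $s(P-1)\overline{s(P-1)}=4c$, i.e.\ $s\overline{s}\,(P-1)\overline{(P-1)}=4c$. First I would substitute the explicit parameters $s=\frac{1+\mu}{2}$ and $c=\left(\frac{1-\mu}{2}\right)^2$ from \eqref{eq: s and c}. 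Since $\mu\in GF(p^m)$ we have $\overline{s}=s$, so $s\overline{s}=\left(\frac{1+\mu}{2}\right)^2$ and $4c=(1-\mu)^2$. Because $P\in\cB_1$ satisfies $P\overline{P}=1$, the factor $(P-1)\overline{(P-1)}=(P-1)(\overline{P}-1)=P\overline{P}-P-\overline{P}+1=2-(P+\overline{P})=2-\operatorname{Tr}(P)$ depends on $P$ only through its trace. Writing $t:=\operatorname{Tr}(P)=P+\overline P$, the tangency condition collapses to a single linear equation in $t$.

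Solving for $t$ gives $\left(\frac{1+\mu}{2}\right)^2(2-t)=(1-\mu)^2$, hence $t=\operatorname{Tr}(P)=2-\frac{4(1-\mu)^2}{(1+\mu)^2}$, which I would simplify to a rational expression in $\mu$. Next I would recover $P$ itself from its trace and the norm constraint $P\overline{P}=1$: the candidates for $P$ are the roots of the quadratic $X^2-tX+1=0$ over $GF(p^m)$. Its discriminant is $t^2-4$, and a short computation should show $t^2-4=\frac{C\cdot(-\mu)}{(1+\mu)^4}$ for an explicit square $C$ in $GF(p^m)$; I would verify that the sign/structure is exactly such that $t^2-4$ is a square in $GF(p^m)$ if and only if $-\mu$ is a square. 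This is the crux: $P$ lies in $GF(p^m)$ (a degenerate case) when $t^2-4$ is a square, and $P$ lies in $GF(p^{2m})\setminus GF(p^m)$ — i.e.\ genuinely on $\cB_1$, giving a \emph{proper} chain — exactly when $t^2-4$ is a non-square, which by the computation happens iff $-\mu$ is a non-square. Solving the quadratic then yields $P=\frac{t\pm\sqrt{t^2-4}}{2}$, and substituting the values of $t$ and $t^2-4$ should reproduce the stated formulas \eqref{eq: Tangential point P1} and \eqref{eq: Tangential point P2}, with $P_1,P_2$ being the two conjugate roots $\frac{t\pm 4(\mu-1)\sqrt{-\mu}/(1+\mu)^2}{\cdots}$.

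For the forward direction I would argue that if a proper Steiner chain exists, then by definition its circles come from the single family $\{\cB^1_{(sP,c)}\}$ and consecutive circles are tangent, so some $P\neq 1$ on $\cB_1$ satisfies the tangency equation; since $P$ genuinely lies on $\cB_1$ and not at a contact-point-doubling position, $P\notin GF(p^m)$, forcing $t^2-4$ to be a non-square and hence $-\mu$ to be a non-square. For the converse, assuming $-\mu$ is a non-square, the above shows $P_1$ (and its conjugate $P_2=\overline{P_1}$) lie on $\cB_1$ and solve the tangency condition, so $\cB^1_{(s,c)},\cB^1_{(sP_1,c)}$ are tangent and by Theorem \ref{th: finite Steiner} a proper chain closes up; I would note that $P_1,P_2$ being conjugate and both of norm $1$ is consistent with $P_2=P_1^{-1}$, which is why the chain generated by $P_1$ and the chain generated by $P_2$ have the same length and the construction is symmetric.

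The main obstacle I anticipate is the bookkeeping in the discriminant computation: one must carefully track the identity $t^2-4=(\text{square})\cdot(-\mu)$ and confirm that the square factor is nonzero (using $\mu\neq -1$, which holds since $b=\mu^2\neq 1$ would fail only if $\mu=\pm1$, and $\mu=-1$ must be excluded or handled), so that the square/non-square character of $t^2-4$ is governed purely by $-\mu$. The algebra is routine but error-prone, and the conceptual subtlety is correctly distinguishing the degenerate case $P\in GF(p^m)$ (where the two contact points with $\cB_b$ coincide and the chain is not proper) from the proper case $P\in GF(p^{2m})\setminus GF(p^m)$; everything hinges on matching this dichotomy to the quadratic residue condition on $-\mu$.
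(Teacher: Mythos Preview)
Your proposal is correct and follows essentially the same route as the paper: both reduce the tangency condition $s(P-1)\overline{s(P-1)}=4c$ with $P\overline P=1$ to a quadratic in $P$, plug in the explicit $s,c$, and read off that the discriminant is a square times $-\mu$, so proper solutions exist iff $-\mu$ is a non-square. Your trace substitution $t=P+\overline P$ and the factorization $t^2-4=(t-2)(t+2)=\frac{64(1-\mu)^2}{(1+\mu)^4}(-\mu)$ just make explicit what the paper leaves as ``solving the quadratic equation''; the only cosmetic point is that in the degenerate case $P\in GF(p^m)$ one actually gets $P=\pm1$ (since $P^2=P\overline P=1$), yielding the opposite circle $\cB^1_{(-s,c)}$ rather than ``coinciding contact points with $\cB_b$''.
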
 

\begin{proof}
  By Theorem \ref{th: finite Steiner}, we can start with $\cB^1_{(s,c)}$.
  This circle is tangential to $\cB^1_{(sP,c)}$ if and only if $s(P-1)\overline{s(P-1)} = 4c$, 
  as seen in Lemma \ref{lemma: tangent circles are tangential}.
  Using $P\overline{P}=1$, this gives a quadratic equation in $P$.
  Solving the quadratic equation and using $s = \frac{1+\mu}{2}$ and $c=\left(\frac{1-\mu}{2}\right)^2$
  leads exactly to the two solutions $P_1$ and $P_2$ above.
  
  For $-\mu$ a square in $GF(p^m)$,
  we obtain $P_1$ and $P_2$ both in $GF(p^m)$.
  Since $1$ and $-1$ are the only elements in $GF(p^m)$ on $\cB_1$,
  we obtain here the opposite circle of $\cB^1_{(s,c)}$,
  which results into a degenerate Steiner chain.
\end{proof}

Before we state our general result about the length of a Steiner chain,
we discuss some specific cases for $\bM(p)$, $p$ an odd prime, in detail.

First, let us examine a criterion for Steiner chains of length $3$.

\begin{lemma} \label{lemma: 3 chain}
  Let $b=\mu^2$ for $\mu$ in $GF(p)$.
  A proper Steiner chain of length $3$ can be constructed if and only if 
    $$ \mu = 7 + 4 \sqrt{3} \in GF(p) $$
  and $-\mu$ a non-square in $GF(p)$.
In particular, if this condition is
satisfied, $\cB_1$ and $\cB_b$ carry a chain of length 3.
\end{lemma}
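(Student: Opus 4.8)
The plan is to specialize the general criterion from Lemma \ref{lemma: P1 and P2} to the case of chain length exactly $3$. By that lemma, a proper Steiner chain exists if and only if $-\mu$ is a non-square in $GF(p)$, and when it exists, the chain is generated by the tangency parameter $P_1$ given in \eqref{eq: Tangential point P1}. Combining this with Theorem \ref{th: finite Steiner}, the length of the resulting chain is the minimal $k$ with $P_1^k = 1$. Thus a chain of length exactly $3$ exists precisely when $-\mu$ is a non-square and $P_1$ is a primitive third root of unity, i.e.\ $P_1^3 = 1$ but $P_1 \neq 1$. So the heart of the matter is to translate the condition ``$P_1$ is a primitive cube root of unity'' into the stated condition $\mu = 7 + 4\sqrt{3}$.

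First I would recall that on $\cB_1$ we have $P_1 \overline{P_1} = 1$, so $P_1$ lies on the unit circle and its cube-root-of-unity condition is governed by its trace. A primitive cube root of unity $\zeta$ satisfies $\zeta^2 + \zeta + 1 = 0$, equivalently $\operatorname{Tr}(\zeta) = \zeta + \overline{\zeta} = \zeta + \zeta^{-1} = -1$ (using $\overline{P_1} = P_1^{-1}$). So I would compute $\operatorname{Tr}(P_1) = P_1 + \overline{P_1}$ from the explicit formula \eqref{eq: Tangential point P1} and set it equal to $-1$. Since $P_1 = \frac{(-\mu^2+6\mu-1) + 4(\mu-1)\sqrt{-\mu}}{(1+\mu)^2}$ with $\mu \in GF(p)$ and $\sqrt{-\mu}$ the adjoined square root lying outside $GF(p)$ (as $-\mu$ is a non-square), the conjugate $\overline{P_1}$ simply flips the sign of the $\sqrt{-\mu}$ term, so the irrational parts cancel and
\[
\operatorname{Tr}(P_1) = \frac{2(-\mu^2 + 6\mu - 1)}{(1+\mu)^2}.
\]
Setting this equal to $-1$ gives $2(-\mu^2 + 6\mu - 1) = -(1+\mu)^2$, a quadratic in $\mu$.

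The remaining step is the routine algebra of solving that quadratic: expanding yields $-2\mu^2 + 12\mu - 2 = -\mu^2 - 2\mu - 1$, hence $\mu^2 - 14\mu + 1 = 0$, whose roots are $\mu = 7 \pm 4\sqrt{3}$. I would note that the two roots are reciprocal (their product is $1$), and I expect that taking $\mu = 7 + 4\sqrt 3$ together with the non-square condition on $-\mu$ is the correct normalization (the other root corresponds to interchanging $\cB_1$ and $\cB_b$, or equivalently replacing $\mu$ by $\mu^{-1}$). I should also verify that $P_1 \neq 1$, so the chain genuinely has length $3$ and not $1$; this follows because $P_1 = 1$ would force $\operatorname{Tr}(P_1) = 2 \neq -1$. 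The main obstacle, such as it is, is bookkeeping: one must confirm that $\sqrt{3}$ lands in $GF(p)$ exactly when the displayed $\mu$ makes sense, and carefully track the interplay between the hypothesis ``$b = \mu^2$'' and ``$-\mu$ a non-square'' so that $P_1$ really is defined in $GF(p^2)\setminus GF(p)$ and behaves as a genuine primitive cube root. No part of this is deep; the only care needed is in matching conventions between the trace computation and the stated closed form.
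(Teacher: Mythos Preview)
Your proposal is correct and follows essentially the same approach as the paper: both invoke Lemma~\ref{lemma: P1 and P2} for the non-square condition on $-\mu$, then impose $P_1^3=1$ with $P_1\neq 1$ and solve for $\mu$. The only difference is computational: the paper phrases the cube-root condition as $P_1^2=P_2$ (equivalent, since $P_2=\overline{P_1}=P_1^{-1}$ on $\cB_1$) and solves directly, whereas you pass through the trace identity $P_1+\overline{P_1}=-1$, which makes the reduction to the quadratic $\mu^2-14\mu+1=0$ a bit more transparent.
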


\begin{proof}
  By Lemma \ref{lemma: P1 and P2}, 
  we know that $-\mu$ necessarily needs to be a non-square in $GF(p)$.
  Moreover, if $-\mu$ is a square in $GF(p)$, 
  we can find two circles in $\tau(1,b)$ which are tangential to $\cB^1_{(s,c)}$,
  namely $\cB^1_{(sP_1,c)}$ and $\cB^1_{(sP_2,c)}$, for $P_1$ and $P_2$ given by
  \eqref{eq: Tangential point P1} and \eqref{eq: Tangential point P2}, respectively.
  For a Steiner chain of length $3$, also $\cB^1_{(sP_1,c)}$ and $\cB^1_{(sP_2,c)}$
  need to be tangential.
 
  \begin{figure}[h!]\label{fig: length three}
    \centering
      \includegraphics[width=7cm]{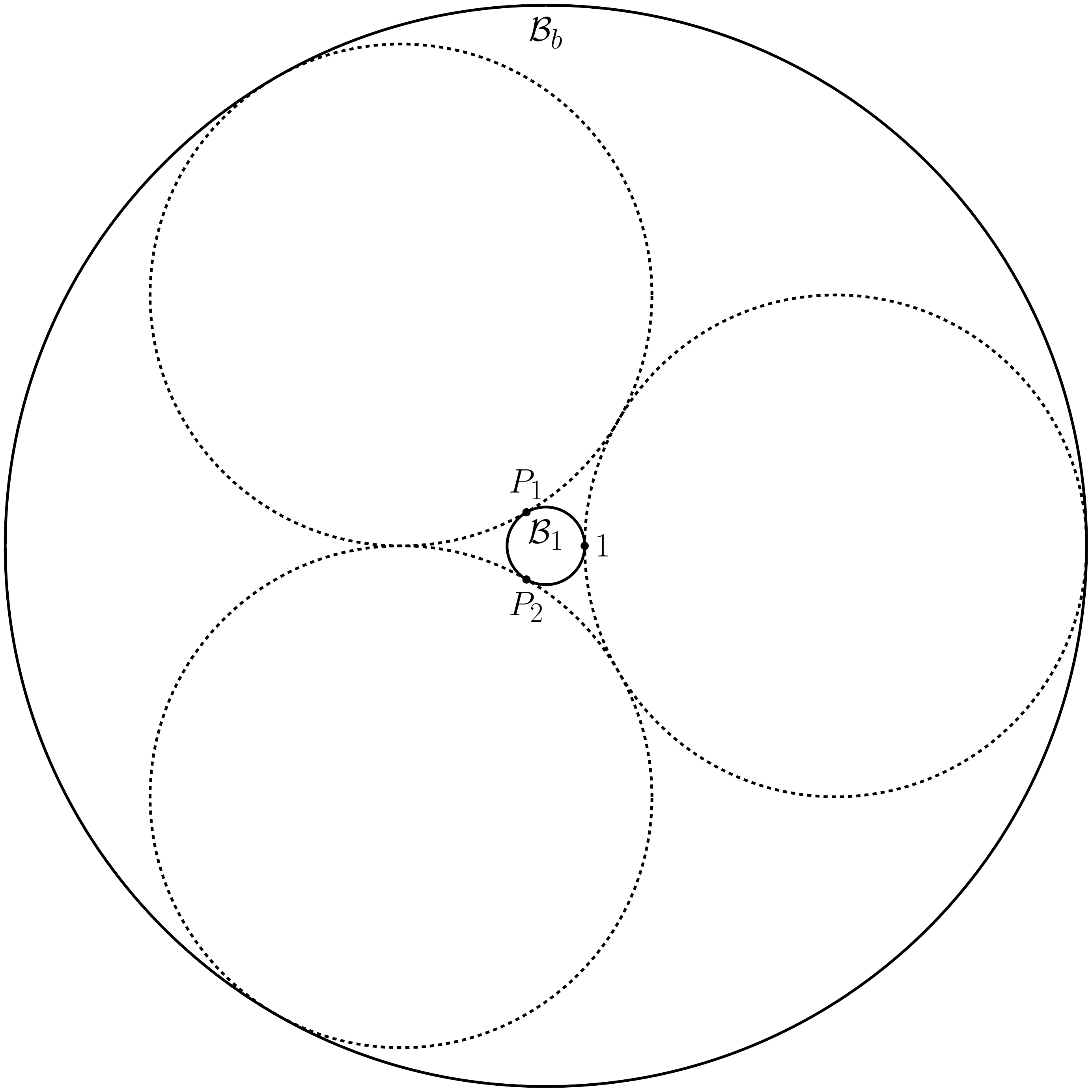}
        \caption{A Steiner chain of length $3$.}
  \end{figure} 
  
  So, we need $P_1^3=1$ or similar $P_1^2 = P_2.$
  Solving this equation for $\mu$ leads to 
    $$ \mu = 7 + 4 \sqrt{3}.$$
  This is only possible for $3$ a square in $GF(p)$.
\end{proof}

Hence, we are interested in a condition on $3$ being a square in $GF(p)$. 
It is well-known (see for example \cite{MR2445243}) that
$$ 3 \text{ is a square in } GF(p) \Leftrightarrow p \equiv \pm 1 \mod \ 12.$$
This already excludes the existence of a Steiner chain of length $3$ in $\bM(5)$.
Indeed, we have seen in Section \ref{section: Example} 
that only Steiner chains of length $6$ occur for two concentric circles in $\bM(5)$.

In $\bM(11)$, however, we can find Steiner chains of length $3$.
For this, calculate
$$ \mu = 7 + 4 \sqrt{3} = \begin{cases} 7+9=5 \mod \ 11 \\ 7+2=9 \mod \ 11. \end{cases} $$
Moreover, $-5=6$ as well as $-9=2$ are non-squares in $GF(11)$. 
So, Steiner chains of length $3$ can be constructed for $\cB_1$ and $\cB_3$ as well as for
$\cB_1$ and $\cB_4$.


Now, let us have a look at Steiner chains of length $4$.

\begin{lemma} \label{lemma: 4 chain}
  Let $b = \mu^2$ for $\mu$ in $GF(p)$.
  A proper Steiner chain of length $4$ can be constructed if and only if 
    $$ \mu = 3 + 2 \sqrt{2} \in GF(p) $$
  and $-\mu$ a non-square in $GF(p)$. 
In particular, if this condition is
satisfied, $\cB_1$ and $\cB_b$ carry a chain of length 4.
\end{lemma}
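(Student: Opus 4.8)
The plan is to follow exactly the template established by Lemma~\ref{lemma: 3 chain}, since the structure of the length-$4$ case is identical to the length-$3$ case with only the closing condition changed. First I would invoke Lemma~\ref{lemma: P1 and P2} to reduce the problem to a concrete algebraic equation: a proper Steiner chain exists only if $-\mu$ is a non-square in $GF(p)$, and in that case the two circles tangential to the starting circle $\cB^1_{(s,c)}$ are $\cB^1_{(sP_1,c)}$ and $\cB^1_{(sP_2,c)}$, with $P_1$ and $P_2$ given explicitly by \eqref{eq: Tangential point P1} and \eqref{eq: Tangential point P2}. The key observation, as in the length-$3$ proof, is that the chain is forced to be unique once the first two circles are chosen, so closing after exactly $4$ steps amounts to the single requirement $P_1^4 = 1$, equivalently $P_1^2 = P_2$ when combined with $P_2 = P_1^{-1} = \overline{P_1}$ (which holds because $P_1 \overline{P_1} = 1$ on $\cB_1$).

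The main computational step is to translate the closure condition $P_1^4 = 1$ into an equation for $\mu$ and solve it. Concretely, I would substitute the expression for $P_1$ from \eqref{eq: Tangential point P1} into $P_1^4 = 1$ (or, more efficiently, into the factored condition distinguishing the order-$4$ case from the order-$2$ and order-$1$ cases, so as to discard the spurious roots $P_1 = \pm 1$ that correspond to degenerate or trivial chains). After clearing the denominator $(1+\mu)^2$ and using $(\sqrt{-\mu})^2 = -\mu$ to eliminate the radical, this collapses to a polynomial equation in $\mu$, which factors and yields $\mu = 3 + 2\sqrt{2}$. As in the previous lemma, the appearance of $\sqrt{2}$ shows that a solution exists in $GF(p)$ precisely when $2$ is a square there, and the explicit value $\mu = 3 + 2\sqrt{2}$ is the claimed one.

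For the converse direction I would simply reverse the computation: assuming $\mu = 3 + 2\sqrt{2} \in GF(p)$ and $-\mu$ a non-square, one checks that $P_1$ as defined satisfies $P_1^4 = 1$ but $P_1^l \neq 1$ for $l < 4$, so that Theorem~\ref{th: finite Steiner} produces an honest Steiner chain of length exactly $4$; the hypothesis that $-\mu$ is a non-square guarantees via Lemma~\ref{lemma: P1 and P2} that the chain is proper rather than degenerate. The final sentence, that $\cB_1$ and $\cB_b$ then carry a chain of length $4$, is immediate from Theorem~\ref{th: finite Steiner}.

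I expect the only real obstacle to be the algebra in the middle step: verifying that the order-$4$ condition on $P_1$ reduces cleanly to $\mu = 3 + 2\sqrt{2}$ and no extraneous branches survive. One must be careful to exclude the lower-order roots $P_1 = \pm 1$ (which would give a chain of length $1$ or $2$, not a genuine length-$4$ chain) and to confirm that the radical $\sqrt{-\mu}$ combines with the radical $\sqrt{2}$ consistently, so that the stated single value of $\mu$ is correct and the non-square condition on $-\mu$ is genuinely needed to rule out degeneracy. Since this is entirely parallel to the length-$3$ computation already carried out, I anticipate no conceptual difficulty beyond this bookkeeping.
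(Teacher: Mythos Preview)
Your overall strategy is exactly the paper's: invoke Lemma~\ref{lemma: P1 and P2} for the non-square condition on $-\mu$, impose the closure condition $P_1^4=1$, solve for $\mu$, and observe that $\sqrt{2}$ must lie in $GF(p)$.

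There is, however, a slip in your reformulation of the closure condition. You write that $P_1^4=1$ is ``equivalently $P_1^2=P_2$'' using $P_2=P_1^{-1}$. But $P_1^2=P_2=P_1^{-1}$ gives $P_1^3=1$, which is the length-$3$ condition from Lemma~\ref{lemma: 3 chain}, not the length-$4$ one. The correct simplification is the one the paper uses: since $P_1P_2=1$, the requirement $P_1^4=1$ with $P_1^2\neq 1$ becomes $P_1^2=-1$, i.e.\ $P_1=-P_2$. If you actually solved $P_1^2=P_2$ you would recover $\mu=7+4\sqrt{3}$ rather than $\mu=3+2\sqrt{2}$. Once you replace your condition by $P_1=-P_2$, the rest of your outline goes through verbatim and matches the paper.
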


\begin{proof}
  Again, by Lemma \ref{lemma: P1 and P2}, 
  $-\mu$ needs to be a non-square in $GF(p)$.
  Moreover, if $-\mu$ is a square in $GF(p)$, 
  we have two circles $\cB^1_{(sP_1,c)}$ and $\cB^1_{(sP_2,c)}$ in $\tau(1,b)$, 
  which are tangential to $\cB^1_{(s,c)}$ with $P_1$ and $P_2$ given by
  \eqref{eq: Tangential point P1} and \eqref{eq: Tangential point P2}, respectively.
  For a Steiner chain of length $4$, we need $P_1^4=1$ or similar, $P_1 = -P_2$.
  \begin{figure}[h]\label{four}
    \centering
      \includegraphics[width=5cm]{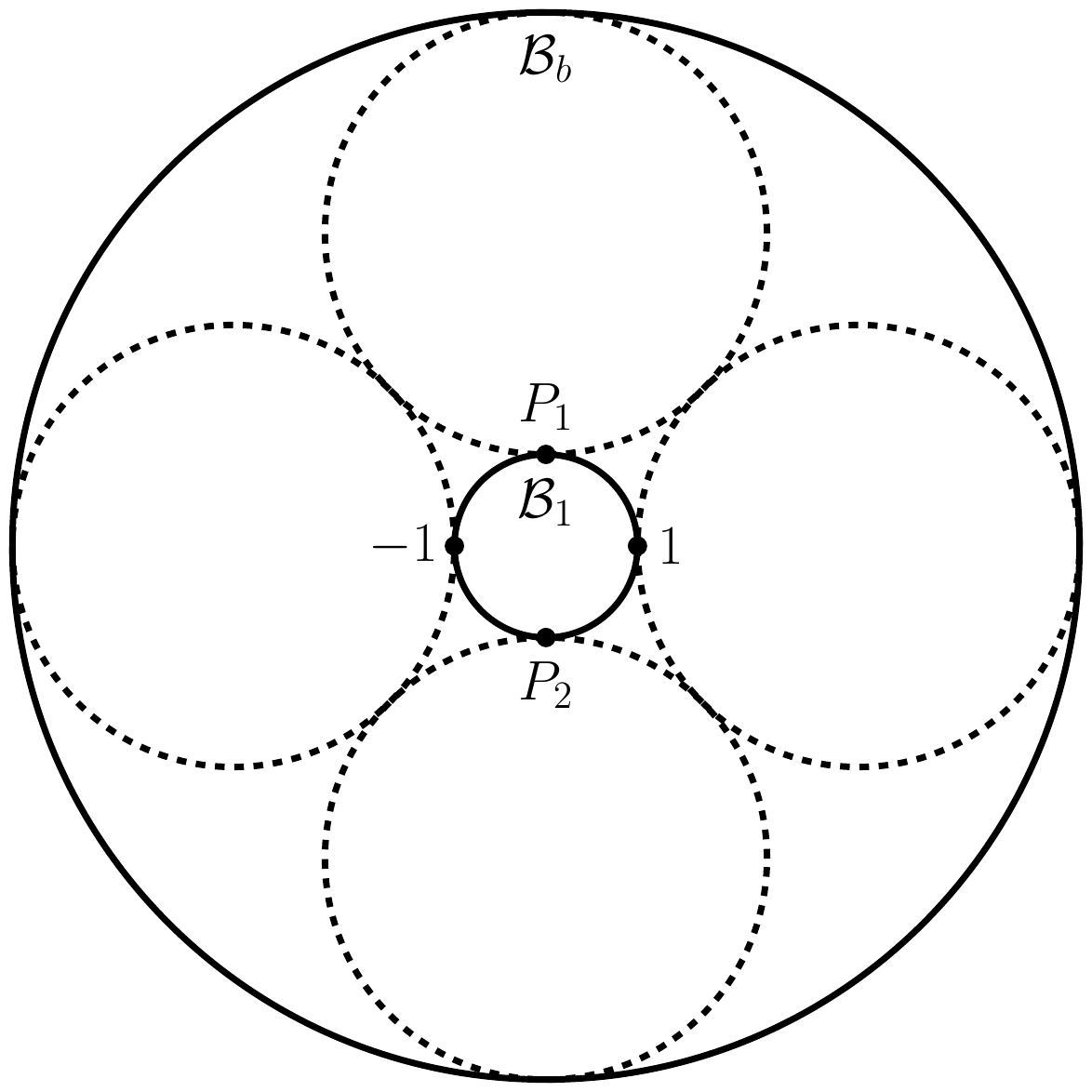}
        \caption{A Steiner chain of length $4$.}
  \end{figure} 
  Solving this equation for $\mu$ gives 
    $$ \mu = 3 + 2 \sqrt{2}.$$
  This is only possible for $2$ a square in $GF(p)$.
\end{proof}

We are interested in a condition on $2$ being a square in $GF(p)$. 
Again from \cite{MR2445243}, we have that
  $$ 2 \text{ is a square in } GF(p) \Leftrightarrow p \equiv \pm 1 \mod \ 8.$$

Let us have a look at $\bM(7)$ and calculate
  $$ \mu = 3 + 2 \sqrt{2} = \begin{cases} 3+6=2 \mod \ 7 \\ 3+1=4 \mod \ 7. \end{cases} $$
Moreover, $-2=5$ as well as $-4=3$ are non-squares in $GF(7)$. 
So, Steiner chains of length $4$ can be constructed for $\cB_1$ and $\cB_2$ as well as
$\cB_1$ and $\cB_4$.
Note that $1$, $2$ and $4$ are the only squares in $GF(7)$, 
hence, only Steiner chains of length $4$ can be constructed using $\cB_1$ and $\cB_b$.
Moreover, $2$ is not a square in $GF(11)$, so in $\bM(11)$, there are no Steiner chains of length $4$.

Similarly, we can calculate a criterion for Steiner chains of length $6$.

\begin{lemma} \label{lemma: 6 chain}
  Let $b = \mu^2$ for $\mu$ in $GF(p)$.
  A proper Steiner chain of length $6$ can be constructed if and only if 
    $$ \mu = 3 \text{ or } \frac{1}{3} $$
  and $-\mu$ a non-square in $GF(p)$. 
In particular, if this condition is
satisfied, $\cB_1$ and $\cB_b$ carry a chain of length 6.
\end{lemma}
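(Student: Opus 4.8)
The plan is to reuse the template of Lemmas \ref{lemma: 3 chain} and \ref{lemma: 4 chain}. By Lemma \ref{lemma: P1 and P2}, a proper chain built from the family $\{\cB^1_{(sP,c)}\}$ forces $-\mu$ to be a non-square, and the two circles tangent to the starting circle $\cB^1_{(s,c)}$ correspond to the parameters $P_1$ and $P_2$ from \eqref{eq: Tangential point P1} and \eqref{eq: Tangential point P2}. First I would record the product relation
\begin{equation*}
  P_1 P_2 = \frac{(-\mu^2+6\mu-1)^2 + 16\mu(\mu-1)^2}{(1+\mu)^4} = \frac{(1+\mu)^4}{(1+\mu)^4} = 1,
\end{equation*}
so that $P_2 = P_1^{-1}$; this is forced anyway, since $P_1, P_2 \in \cB_1$ and on $\cB_1$ the conjugate coincides with the inverse.

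By the construction in Theorem \ref{th: finite Steiner}, the length of the chain equals the multiplicative order of $P_1$ in the cyclic group $\cB_1$. A chain of length exactly $6$ thus amounts to $P_1$ being a primitive sixth root of unity, i.e.\ a root of $\Phi_6(x) = x^2 - x + 1$. Using $P_1 P_2 = 1$, the relation $P_1^2 - P_1 + 1 = 0$ is equivalent to $P_1 + P_2 = 1$, and substituting
\begin{equation*}
  P_1 + P_2 = \frac{2(-\mu^2 + 6\mu - 1)}{(1+\mu)^2} = 1
\end{equation*}
gives $3\mu^2 - 10\mu + 3 = 0$, which factors as $(3\mu - 1)(\mu - 3) = 0$. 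Hence $\mu = 3$ or $\mu = \tfrac{1}{3}$. Note that, in contrast to the lengths $3$ and $4$, no auxiliary square condition on the field is required here, since $3$ and $\tfrac{1}{3}$ always lie in $GF(p)$.

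For the converse I would substitute $\mu = 3$ into \eqref{eq: Tangential point P1} to obtain $P_1 = \tfrac{1}{2}(1 + \sqrt{-3})$, verify directly that $P_1^2 - P_1 + 1 = 0$, and handle $\mu = \tfrac{1}{3}$ via the symmetry $\mu \leftrightarrow \mu^{-1}$, which swaps $P_1$ and $P_2$. The main obstacle is to certify that the order of $P_1$ is exactly $6$ rather than a proper divisor; this is precisely where the non-square hypothesis does the work. The discriminant of $\Phi_6$ is $-3$, so $\Phi_6$ is irreducible over $GF(p)$ exactly when $-3$ (equivalently $-\mu = -3$) is a non-square, which places $P_1$ in $GF(p^2) \setminus GF(p)$. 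Consequently $P_1 \neq \pm 1$, ruling out orders $1$ and $2$; and since $P_1$ satisfies $x^2 - x + 1$ while a primitive cube root would satisfy the coprime polynomial $x^2 + x + 1$, the order is not $3$ either. As the order divides $6$, it must equal $6$. If instead $-3$ were a square, both roots would lie in $GF(p) \cap \cB_1 = \{1, -1\}$, yielding exactly the degenerate chain already excluded by Lemma \ref{lemma: P1 and P2}.
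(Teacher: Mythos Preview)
The paper does not actually supply a proof for this lemma; it states the result and remarks that, unlike the cases $k=3,4$, no square-root condition on the field arises. Your argument follows the template of Lemmas~\ref{lemma: 3 chain} and~\ref{lemma: 4 chain} faithfully and is correct: the identification $P_2=P_1^{-1}$, the reduction of ``order exactly $6$'' to $P_1+P_2=1$ via $\Phi_6$, and the resulting quadratic $3\mu^2-10\mu+3=0$ all check out.

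Where you go somewhat beyond the paper's template is in the treatment of minimality. The paper's proofs for $k=3,4$ simply impose $P_1^k=1$ (in the equivalent forms $P_1^2=P_2$ and $P_1=-P_2$) and solve, without separately excluding proper divisors; you instead work directly with the cyclotomic factor $\Phi_6$ and then argue explicitly that the non-square hypothesis on $-\mu=-3$ forces $P_1\notin GF(p)$, hence rules out orders $1,2$, while incompatibility with $x^2+x+1$ rules out order $3$. This is cleaner and more rigorous than the paper's sketch, and it makes transparent why the non-square condition on $-\mu$ is exactly what is needed to exclude the degenerate cases.
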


This criterion is different from the criterion for Steiner chains of length $3$ and $4$,
since no square roots appear in the expression for $\mu$ above. 

By Steiner's Theorem it follows directly that the existence of a Steiner 
chain of length $6$ in $GF(p)$ implies $6|(p+1)$. 
Indeed, the condition $6|(p+1)$ is actually equivalent with $-3$ being a non-square in $GF(p)$,
which can be seen by number theoretic calculations only.

Note that in $\bM(5)$, this gives $3^2=4=(\frac{1}{3})^2$, 
i.e.\ only for $\cB_1$ and $\cB_4$ a Steiner chain of length $6$ can be constructed.
Compare these results also to Section \ref{section: Example}.

Now, let us have a look at the conditions for Steiner chains of length $5$ and $8$.
The expression for $\mu$ becomes more and more difficult,
since equations of higher order need to be solved.
In particular, for Steiner chains of length $5$, we need to solve $P_1^3=P_2^2$ and
for Steiner chains of length $8$, we need to solve $P_1^2=-P_2^2$.

\begin{lemma} \label{lemma: 5 chain}
  Let $b = \mu^2$ for $\mu$ in $GF(p)$.
  A proper Steiner chain of length $5$ can be constructed if and only if 
    $$ \mu = 11 - 4 \sqrt{5} + 2 \sqrt{50 - 22 \sqrt{5}} \in GF(p) $$
  and $-\mu$ a non-square in $GF(p)$. 
In particular, if this condition is
satisfied, $\cB_1$ and $\cB_b$ carry a chain of length $5$.
\end{lemma}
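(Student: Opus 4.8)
The plan is to follow the template of Lemmas \ref{lemma: 3 chain}, \ref{lemma: 4 chain} and \ref{lemma: 6 chain}. By Lemma \ref{lemma: P1 and P2} a proper Steiner chain forces $-\mu$ to be a non-square in $GF(p)$, and in that case the circle $\cB^1_{(s,c)}$ is tangential to exactly the two circles $\cB^1_{(sP_1,c)}$ and $\cB^1_{(sP_2,c)}$, with $P_1,P_2$ given by \eqref{eq: Tangential point P1} and \eqref{eq: Tangential point P2}. The first thing I would record is the algebraic fact $P_1 P_2 = 1$: multiplying the two expressions, the numerator collapses to $(-\mu^2+6\mu-1)^2 + 16(\mu-1)^2\mu = (1+\mu)^4$, which cancels the denominator $(1+\mu)^4$. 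Hence $P_2 = P_1^{-1}$, so the two tangent neighbours of every circle are the inverse steps, and by the multiplicative tangency rule of Lemma \ref{lemma: tangent circles are tangential} the entire chain consists of the powers $\cB^1_{(sP_1^{\,j},c)}$.

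Consequently a proper chain closes after exactly five steps precisely when $P_1$ has order $5$ in $\cB_1$, i.e.\ $P_1^5 = 1$ with $P_1 \neq 1$; since $5$ is prime, minimality is automatic. Using $P_2 = P_1^{-1}$ this is exactly the relation $P_1^3 = P_2^2$ announced in the text. Rather than expand $P_1^5$ directly, I would pass to the symmetric quantity $S := P_1 + P_2 = \frac{2(-\mu^2+6\mu-1)}{(1+\mu)^2}$, in which the irrational part $\sqrt{-\mu}$ cancels. Since $P_1 P_2 = 1$, the numbers $P_1,P_2$ are the roots of $x^2 - Sx + 1$, and a short computation (multiplying $S^2+S-1$ by $P_1^2$) shows that $P_1$ is a primitive fifth root of unity if and only if $S$ satisfies the minimal polynomial of $\zeta_5 + \zeta_5^{-1}$, namely $S^2 + S - 1 = 0$. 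This automatically excludes $P_1 = 1$ (which gives $S = 2$) and every shorter order.

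It then remains to solve $S^2 + S - 1 = 0$ for $\mu$. Substituting the expression for $S$ and clearing $(1+\mu)^4$ yields the palindromic quartic $\mu^4 - 44\mu^3 + 166\mu^2 - 44\mu + 1 = 0$. I would exploit the reciprocal structure by dividing through by $\mu^2$ and setting $u := \mu + \mu^{-1}$, which reduces it to $u^2 - 44u + 164 = 0$, with roots $u = 22 \pm 8\sqrt5$. Solving $\mu^2 - u\mu + 1 = 0$ on the branch $u = 22 - 8\sqrt5$ and simplifying the nested radical via $\tfrac12\sqrt{u^2-4} = 2\sqrt{50 - 22\sqrt5}$ gives $\mu = 11 - 4\sqrt5 + 2\sqrt{50 - 22\sqrt5}$, as claimed; the conjugate sign gives $\mu^{-1}$ and the other $u$-branch corresponds to the fifth root of the other winding number, so all four roots describe the same two geometric configurations.

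The main obstacle is not the closure logic, which becomes routine once $P_1 P_2 = 1$ is in hand, but keeping the radical algebra honest: one must verify that the stated $\mu$ really is among the four roots of the reciprocal quartic and is the branch compatible with $-\mu$ being a non-square, so that $P_1 \notin GF(p)$ and the chain is genuinely proper rather than degenerate (cf.\ the degenerate case in Lemma \ref{lemma: P1 and P2}). A secondary point to keep straight is that the equivalence $S^2+S-1=0 \Leftrightarrow \operatorname{ord}(P_1)=5$ uses $\operatorname{char}\neq 5$; in $GF(5)$ one has $P_1^5 = P_1$, so no length-$5$ chain exists there, consistent with the fact that the radicals fail to produce a suitable non-square $\mu$ unless $5 \mid (p+1)$.
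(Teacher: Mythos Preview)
Your argument is correct and follows exactly the template the paper sets up: the paper gives no detailed proof of this lemma but merely says ``for Steiner chains of length $5$, we need to solve $P_1^3=P_2^2$,'' relying on the pattern of Lemmas \ref{lemma: 3 chain} and \ref{lemma: 4 chain}. Your explicit verification of $P_1P_2=1$ makes precise what the paper leaves implicit (it is already used tacitly in the reformulations $P_1^2=P_2$ and $P_1=-P_2$ for lengths $3$ and $4$), and your passage to the symmetric quantity $S=P_1+P_2$, the cyclotomic relation $S^2+S-1=0$, and the palindromic reduction $u=\mu+\mu^{-1}$ give a clean, checkable route to the stated radical expression that the paper does not spell out. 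The remarks on the other sign branches and on characteristic $5$ are apt refinements of a statement the paper itself leaves somewhat informal.
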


We know that
  $$ 5 \text{ is a square in } GF(p) \Leftrightarrow p \equiv \pm 1 \mod \ 5 $$
which gives a necessary, but not sufficient condition for the existence of Steiner chains of length $5$.

\begin{lemma} \label{lemma: 8 chain}
  Let $b = \mu^2$ for $\mu$ in $GF(p)$.
  A proper Steiner chain of length $8$ can be constructed if and only if 
    $$ \mu = 7 - 4 \sqrt{2} + 2 \sqrt{2 (10 - 7 \sqrt{2})} \in GF(p) $$
  and $-\mu$ a non-square in $GF(p)$. 
In particular, if this condition is
satisfied, $\cB_1$ and $\cB_b$ carry a chain of length $8$.
\end{lemma}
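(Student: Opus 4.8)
The plan is to follow verbatim the template of the proofs of Lemma~\ref{lemma: 3 chain} and Lemma~\ref{lemma: 4 chain}. First, by Lemma~\ref{lemma: P1 and P2}, a proper Steiner chain forces $-\mu$ to be a non-square in $GF(p)$; if instead $-\mu$ is a square, then $P_1,P_2$ from \eqref{eq: Tangential point P1} and \eqref{eq: Tangential point P2} both lie in $GF(p)\cap\cB_1=\{1,-1\}$ and we recover only the opposite circle, i.e.\ a degenerate chain. So I would assume throughout that $-\mu$ is a non-square, which guarantees $\sqrt{-\mu}\notin GF(p)$ and hence $P_1\notin GF(p)$, so that the chain produced is genuinely proper.

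Next I would reduce the length-$8$ condition to the relation $P_1^2=-P_2^2$ announced in the running text. By Theorem~\ref{th: finite Steiner} the chain starting at $\cB^1_{(s,c)}$ and advancing by multiplication with $P_1$ closes after exactly $k$ steps, where $k$ is the multiplicative order of $P_1$. A one-line computation from \eqref{eq: Tangential point P1} and \eqref{eq: Tangential point P2} gives $P_1P_2=1$, so $P_2=P_1^{-1}$ and the relation $P_1^2=-P_2^2$ is equivalent to $P_1^4=-1$. Since $p$ is odd, $-1\neq 1$, so $P_1^4=-1$ is exactly the statement that $P_1$ has order $8$: it forces $P_1^8=1$ while ruling out every order dividing $4$. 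Hence a proper chain of length precisely $8$ exists if and only if $P_1^2=-P_2^2$, and no separate minimality check $P_1^l\neq 1$ for $l<8$ is required.

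It then remains to solve $P_1^2=-P_2^2$ for $\mu$. Using $P_1P_2=1$ I would rewrite it as $(P_1+P_2)^2-2=0$, i.e.\ $(P_1+P_2)^2=2$, and substitute $P_1+P_2=\frac{2(-\mu^2+6\mu-1)}{(1+\mu)^2}$ to obtain $2(\mu^2-6\mu+1)^2=(1+\mu)^4$, which after expansion collapses to the reciprocal quartic
\begin{equation*}
  \mu^4-28\mu^3+70\mu^2-28\mu+1=0.
\end{equation*}
Dividing by $\mu^2$ and setting $u=\mu+\mu^{-1}$ reduces this to $u^2-28u+68=0$, whence $u=14\pm 8\sqrt{2}$; solving $\mu^2-u\mu+1=0$ then recovers $\mu=7-4\sqrt{2}+2\sqrt{2(10-7\sqrt{2})}$ once the signs are fixed to match the stated root. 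In particular $2$ must be a square in $GF(p)$ for such a $\mu$ to exist, paralleling the roles of $\sqrt{3}$ and $\sqrt{2}$ in the length-$3$ and length-$4$ cases.

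The main obstacle is the sign and radical bookkeeping in this last step: the quartic admits up to four roots (two values of $u$, then two of $\mu$), and I must verify that the displayed root genuinely yields a $P_1$ of order exactly $8$ rather than merely its reciprocal or a spurious companion, and that $-\mu$ is a non-square for it. The reduction $P_1^2=-P_2^2\iff P_1^4=-1$ already disposes of the order-counting, so the delicate work is confined to checking that the nested-radical simplification $\sqrt{320-224\sqrt{2}}=4\sqrt{2(10-7\sqrt{2})}$ is correct and that the chosen sign places $\mu$ in $GF(p)$ with $-\mu$ a non-square, which then gives the asserted closed form and, via Theorem~\ref{th: finite Steiner}, the chain of length $8$.
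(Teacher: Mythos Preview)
Your proposal is correct and follows precisely the route the paper indicates: reduce the length-$8$ condition to $P_1^2=-P_2^2$ and solve for $\mu$; the paper itself states only this equation and the resulting closed form, without carrying out the algebra. Your argument in fact supplies more than the paper does: the observation $P_1P_2=1$ (immediate from \eqref{eq: Tangential point P1}--\eqref{eq: Tangential point P2}) is not made explicit in the paper, and it is what lets you convert $P_1^2=-P_2^2$ into $P_1^4=-1$ and thereby certify order exactly $8$ without a separate minimality check; likewise the reciprocal-quartic reduction via $u=\mu+\mu^{-1}$ and the radical simplification $\sqrt{320-224\sqrt{2}}=4\sqrt{2(10-7\sqrt{2})}$ are details the paper leaves to the reader. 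Your caveat about the four roots is appropriate: as in Lemmas~\ref{lemma: 3 chain}--\ref{lemma: 6 chain}, the displayed $\mu$ is a representative, its reciprocal (from the other root of $\mu^2-u\mu+1=0$) and the companion value of $u$ yielding the same geometry after swapping $\cB_1$ and $\cB_b$ or changing the sign of $\sqrt{2}$.
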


Note that $2$ needs necessarily to be a square, similar to Steiner chains of length $4$. 
This is not surprising, since $8$ is a multiple of $4$.

Now we are ready to give a condition for Steiner chains of length $k$,
for $k \geq 3$.

\begin{theorem}\label{concentric}
  Let $b = \mu^2\neq 1$, for $\mu$ in $GF(p^m)$.
  A proper Steiner chain of length $k$ can be constructed if and only if the following conditions are satisfied.
    \begin{enumerate}
      \item $-\mu$ is a non-square in $GF(p^m)$,
      \item $\mu$ solves $ P_1^k=1$ for $P_1$ given by 
        \begin{align}
          P_1 = \frac{-\mu^2+6\mu-1 + 4(\mu-1)\sqrt{-\mu}}{(1+\mu)^2}
        \end{align}
            but $P_1^l \neq 1$ for all $1 \leq l \leq k-1$.
    \end{enumerate}
    In  particular,  if these  conditions  are  satisfied, $\cB_1$  and
    $\cB_b$ carry a chain of length $k$.
\end{theorem}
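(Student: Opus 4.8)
The plan is to reduce the statement to two results already at hand: Lemma~\ref{lemma: P1 and P2}, which gives the existence criterion and identifies the two admissible ratios $P_1,P_2$, and Theorem~\ref{th: finite Steiner}, which produces the chain once a tangent pair is found. The only genuinely new ingredient is the identification of the length of a proper chain with a multiplicative order. Condition~(1) is immediate: by Lemma~\ref{lemma: P1 and P2} a proper chain built from the group $\{\cB^1_{(sP,c)}\mid P\in\cB_1\}$ with $s=\frac{1+\mu}{2}$, $c=(\frac{1-\mu}{2})^2$ exists at all if and only if $-\mu$ is a non-square in $GF(p^m)$; the other group is handled by the symmetry $\mu\leftrightarrow-\mu$, so I would fix this group and assume $(1)$ from now on, which also guarantees $P_1,P_2\in\cB_1$.

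The structural fact I would isolate is that $\cB_1=\cB^1_{(0,1)}$ is the set of norm-one elements of $GF(p^{2m})$, hence the kernel of the norm map and therefore a cyclic group of order $p^m+1$ under multiplication. The tangent circles of the chosen group are indexed by this group via $R\mapsto\cB^1_{(sR,c)}$. By Lemma~\ref{lemma: tangent circles are tangential} together with the computation in Lemma~\ref{lemma: P1 and P2}, the only two members of the group tangent to $\cB^1_{(sR,c)}$ are $\cB^1_{(sRP_1,c)}$ and $\cB^1_{(sRP_2,c)}$. Since $P_1$ and $P_2$ are the two roots of the quadratic in $P$ obtained from $s(P-1)\overline{s(P-1)}=4c$ under the constraint $P\overline{P}=1$, their product equals $1$, so $P_2=P_1^{-1}$; equivalently, passing from one circle to a tangent neighbour amounts to multiplying its index by $P_1$ or by $P_1^{-1}$.

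It then remains to count the orbit. Fixing a starting circle and a direction, the requirement that the circles of a Steiner chain be pairwise distinct forces the chain to be $\cB^1_{(sR,c)},\cB^1_{(sRP_1,c)},\cB^1_{(sRP_1^2,c)},\dots$, i.e.\ successive multiplication of the index by $P_1$. This sequence first returns to its starting circle after exactly $\operatorname{ord}(P_1)$ steps, and I would check that the closing condition $|T_k\cap T_1|=1$ holds precisely when $P_1^{k}=1$: the only alternative way for $\cB^1_{(sRP_1^{k-1},c)}$ to touch $\cB^1_{(sR,c)}$ is $P_1^{k-2}=1$, which contradicts pairwise distinctness for $k\geq3$. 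Consequently a proper chain has length exactly $\operatorname{ord}(P_1)$, so a chain of length $k$ exists if and only if $\operatorname{ord}(P_1)=k$, which is exactly condition~$(2)$; the converse is then Theorem~\ref{th: finite Steiner} applied to $\cB^1_{(s,c)}$ and $\cB^1_{(sP_1,c)}$, and the ``chain of length $k$'' clause follows since every group element is a valid starting index. I expect the only real obstacle to be the bookkeeping here—pinning the length at $\operatorname{ord}(P_1)$ rather than a proper divisor by excluding the spurious early closing $P_1^{k-2}=1$—while the rest is assembling the cited results.
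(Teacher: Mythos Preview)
Your proposal is correct and follows the same approach as the paper: reduce to Lemma~\ref{lemma: P1 and P2} for the non-square condition and to Theorem~\ref{th: finite Steiner} for the chain construction, then identify the length with the multiplicative order of $P_1$. Your version is in fact more detailed than the paper's terse argument---you make explicit that $P_2=P_1^{-1}$, handle the second group of tangent circles via the symmetry $\mu\leftrightarrow-\mu$, and rule out the spurious early closing $P_1^{k-2}=1$, all of which the paper leaves implicit or unaddressed.
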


\begin{proof}
  Let us assume the existence of a Steiner chain of length $k$.
  By Lemma \ref{lemma: P1 and P2}, $-\mu$ needs to be a non-square in $GF(p^m)$ 
  to obtain two circles $\cB^1_{(sP_1,c)}$ and $\cB^1_{(sP_2,c)}$, which are both tangential to $\cB^1_{(s,c)}$.
  Again by Theorem \ref{th: finite Steiner}, 
  we know that starting with two such circles $\cB^1_{(sP_1,c)}$ and $\cB^1_{(s,c)}$ in $\tau(1,b)$,
  which are mutually tangential, we end up with a proper Steiner chain.
  Moreover, the length of the Steiner chain is then given by the smallest integer $k$, 
  such that $P_1^k=1$, i.e.\ we are back at the starting point.
  
  Now let us assume that the above three conditions are satisfied.
  Since $-\mu$ is a non-square, we can apply Lemma \ref{lemma: P1 and P2} to obtain a proper Steiner chain.
  Since $k$ is by assumption such that  $P_1^k=1$ but $P_1^l \neq 1$ for all $1 \leq l \leq k-1$, 
  the length of the proper Steiner chain is indeed $k$.
\end{proof}


\section{Generalization}
\subsection{A M\"obius invariant for pairs of circles}
In the Euclidean plane two non-intersecting circles form a 
capacitor. The capacitance is a conformal invariant and therefore
in particular invariant under M\"obius transformations. 
Here we present a discrete analogue of this fact 
which will be used later to decide whether any two
non-intersecting circles carry a proper Steiner chain of
length $k$.

The \emph{capacitance} associates a number in $GF(p^m)$ to any pair of circles
in $\mathbb M(p^m)$ and is defined as follows: 
\begin{eqnarray*}
\operatorname{cap}(\mathcal{B}^1_{(s_1,c_1)},\mathcal{B}^1_{(s_2,c_2)} )&:=&
\frac1{c_1c_2}\bigl(c_1+c_2-(s_1-s_2)(\bar s_1 -\bar s_2)\bigr)^2\\
\operatorname{cap}(\mathcal{B}^1_{(s_1,c_1)},\mathcal{B}^2_{(s_2,c_2)} )= \operatorname{cap}(\mathcal{B}^2_{(s_2,c_2)},\mathcal{B}^1_{(s_1,c_1)} )&:=&
\frac1{c_1s_2\bar s_2}(s_1\bar s_2 + \bar s_1 s_2 - c_2)^2\\
\operatorname{cap}(\mathcal{B}^2_{(s_1,c_1)},\mathcal{B}^2_{(s_2,c_2)} )&:=&
\frac1{s_1\bar s_1 s_2 \bar s_2}(s_1\bar s_2 + \bar s_1 s_2)^2
\end{eqnarray*}
Then it turns out that this quantity is indeed invariant under
M\"obius transformations:
\begin{theorem}\label{invariant}
Let $\mathcal B, \tilde{\mathcal B}$ be two circles and $\tau$ a M\"obius transformation.
Then
$$
\operatorname{cap}(\mathcal B,\tilde{\mathcal B}) = 
\operatorname{cap}(\tau(\mathcal B),\tau(\tilde{\mathcal B})) 
$$
\end{theorem}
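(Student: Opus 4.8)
The plan is to encode circles of both types by a single object, a Hermitian $2\times 2$ matrix over $GF(p^{2m})$, and to rewrite the capacitance in a form that transforms transparently under M\"obius maps. To each circle I would associate the matrix $M=\begin{pmatrix}A&\beta\\\bar\beta&C\end{pmatrix}$ with $A,C\in GF(p^m)$ and $\beta\in GF(p^{2m})$, so that the circle is the zero locus of $(\bar z,1)\,M\,(z,1)^{\mathsf T}=Az\bar z+\beta\bar z+\bar\beta z+C$. Concretely, $\cB^1_{(s,c)}$ corresponds to $A=1,\ \beta=-s,\ C=s\bar s-c$, and $\cB^2_{(s,c)}$ to $A=0,\ \beta=s,\ C=-c$ (the latter together with $\infty=(1,0)$). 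A direct check gives $\det M=-c$ in the first case and $\det M=-s\bar s$ in the second, so every circle has $\det M\neq0$.

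First I would verify that the three defining formulas for $\operatorname{cap}$ are the three instances of the single expression
\[
\operatorname{cap}(\mathcal B_1,\mathcal B_2)=\frac{\operatorname{tr}\bigl(M_1\operatorname{adj}(M_2)\bigr)^2}{\det M_1\,\det M_2},
\]
where $\operatorname{adj}$ denotes the adjugate. Indeed $\operatorname{tr}(M_1\operatorname{adj}(M_2))=A_1C_2+A_2C_1-\beta_1\bar\beta_2-\bar\beta_1\beta_2$, and substituting the dictionary above reproduces each of the three cases up to a sign that is killed by squaring; the denominator $\det M_1\det M_2$ matches $c_1c_2$, $c_1 s_2\bar s_2$, resp.\ $s_1\bar s_1 s_2\bar s_2$. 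Note that this expression is unchanged if some $M_i$ is replaced by $\lambda_i M_i$ with $\lambda_i\in GF(p^m)\setminus\{0\}$: numerator and denominator both scale by $\lambda_1^2\lambda_2^2$. Hence $\operatorname{cap}$ depends only on the circles, not on the chosen normalisation of their matrices.

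Next I would establish the transformation law. Writing a M\"obius transformation $\tau$ as $g=\begin{pmatrix}a&b\\c&d\end{pmatrix}$ acting projectively on $(z,1)^{\mathsf T}$ and setting $h=g^{-1}$, the substitution $(z,1)^{\mathsf T}\sim h\,(w,1)^{\mathsf T}$ turns the equation of $\mathcal B$ into $(\bar w,1)\,(h^{*}Mh)\,(w,1)^{\mathsf T}=0$ up to a nonzero factor in $GF(p^m)$, where $h^{*}=\bar h^{\mathsf T}$; thus $\tau(\mathcal B)$ is represented by $M'=h^{*}Mh$, and by the previous paragraph the scalar ambiguity is irrelevant. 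The point at infinity is handled uniformly in the coordinates $(1,0)$. Invariance is then a short matrix computation: $\det M'=\operatorname{N}(\det h)\det M$ for each circle, while using $h\,\operatorname{adj}(h)=\det(h)I$, the identity $\operatorname{adj}(h^{*})=\operatorname{adj}(h)^{*}$ and cyclicity of the trace one gets $\operatorname{tr}(M_1'\operatorname{adj}(M_2'))=\operatorname{N}(\det h)\,\operatorname{tr}(M_1\operatorname{adj}(M_2))$. Plugging into the displayed formula, the factor $\operatorname{N}(\det h)^2$ cancels, giving $\operatorname{cap}(\tau(\mathcal B),\tau(\tilde{\mathcal B}))=\operatorname{cap}(\mathcal B,\tilde{\mathcal B})$.

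The main obstacle is conceptual rather than computational: one must be sure that $h^{*}Mh$ really represents the image circle. This is exactly where scale-invariance is essential, since $h^{*}Mh$ need not be in normalised form ($A\in\{0,1\}$) and, for an inversion, will generically have a different $(1,1)$-entry than $M$ --- precisely the mechanism by which inversions interchange the two types of circles. Because the capacitance formula is insensitive both to the overall scalar and to which type the matrix happens to present, all these type changes are absorbed automatically, and the only genuine verifications are the one-line dictionary check of the unified formula and the short matrix identity above. Alternatively, one could avoid matrices and check invariance directly on the generators $z\mapsto z+\beta$, $z\mapsto\gamma z$ and $z\mapsto 1/z$ of the M\"obius group; the first two are immediate from the transformation of $(s,c)$, but the inversion then forces a tedious case distinction according to whether each circle passes through $0$, which the matrix approach sidesteps.
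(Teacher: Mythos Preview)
Your argument is correct. The unified expression
\[
\operatorname{cap}(\mathcal B_1,\mathcal B_2)=\frac{\operatorname{tr}\bigl(M_1\operatorname{adj}(M_2)\bigr)^2}{\det M_1\,\det M_2}
\]
does specialise to all three defining formulas, the Hermitian matrices with nonzero determinant modulo $GF(p^m)^*$-scaling are exactly the circles of $\bM(p^m)$, and the transformation rule $M\mapsto h^{*}Mh$ together with $h\operatorname{adj}(h)=\det(h)I$ and $\operatorname{adj}(h^{*})=\operatorname{adj}(h)^{*}$ yields the cancellation of $\operatorname{N}(\det h)^2$ exactly as you write.

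Your route, however, is genuinely different from the paper's. The paper proceeds precisely along the ``alternative'' you sketch at the end: it checks invariance on the generators $z\mapsto z+a$, $z\mapsto bz$, and $z\mapsto 1/z$. The first two are declared easy; for the inversion it writes down explicitly how $\cB^1_{(s,c)}$ and $\cB^2_{(s,c)}$ transform (splitting according to whether $s\bar s=c$, respectively $c=0$), then carries out one representative case of the resulting capacitance computation and leaves the remaining cases to the reader. What your Hermitian--matrix formulation buys is exactly the elimination of this case analysis: the type change under inversion is absorbed into the single congruence action $M\mapsto h^{*}Mh$, and scale invariance handles the renormalisation automatically. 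The paper's approach, by contrast, requires no additional machinery beyond the defining formulas, at the price of the ``tedious'' case distinction it acknowledges. Both arguments are complete; yours is more uniform and also exhibits $\operatorname{cap}$ as a natural projective invariant of the pair of Hermitian forms.
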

\begin{proof}
It is easy to check, that $\operatorname{cap}$ is invariant under
translations $z\mapsto \zeta=z+a$, $a\in GF(p^{2m})$, and similarity transformations
$z\mapsto \zeta=b z$, $b\in GF(p^{2m})\setminus\{0\}$. 

The   only   tedious  part   of   the   proof   is  to   check,   that
$\operatorname{cap}$      is       invariant      under      inversion
$\tau: z\mapsto  \zeta=1/z$, since  in this  case, circles  may change
from first  type to second  type and vice  versa: It is  elementary to
check that
$$
\tau(\mathcal B^1_{(s,c)}) = \begin{cases}
\mathcal{B}^1_{\left(\frac{\bar s}{s\bar s-c},\frac c{(s\bar s-c)^2}\right) } &\text{if $s\bar s\neq c$}\\
\mathcal{B}^2_{(\bar s,1)} &\text{if $s\bar s = c$}
\end{cases}
$$
and
$$
\tau(\mathcal B^2_{(s,c)}) = \begin{cases}
\mathcal{B}^1_{\left(\frac{\bar s}c,\frac{s\bar s}{c^2}\right)} &\text{if $c\neq 0$}\\
\mathcal{B}^2_{(\bar s,0)} &\text{if $c = 0$}
\end{cases}
$$
We only carry out the invariance proof for one prototypical case of two circles $\mathcal B^1_{(s_1,c_1)}$ with $s_1\bar s_1 = c_1$
and $\mathcal B^1_{(s_2,c_2)}$ with $s_2\bar s_2 \neq c_2$. Then, in this case,
\begin{eqnarray}\label{cap1}
\lefteqn{
\operatorname{cap}(\mathcal{B}^1_{(s_1,c_1)},\mathcal{B}^1_{(s_2,c_2)} )=}\notag\\
&=&\frac1{c_1c_2}\bigl(c_1+c_2-(s_1-s_2)(\bar s_1 -\bar s_2)\bigr)^2 \notag\\
&=&\frac1{c_1c_2}\bigl(c_2+s_1\bar s_2 +s_2(s_1-s_2)\bigr)^2
\end{eqnarray}
where we have used $s_1\bar s_1=c_1$. On the other hand,
\begin{eqnarray}\label{cap2}
\lefteqn{
\operatorname{cap}\bigl(\tau(\mathcal{B}^1_{(s_1,c_1)}),\tau(\mathcal{B}^1_{(s_2,c_2)}) \bigr)=}\notag\\
&=&\operatorname{cap}\bigl(\mathcal{B}^2_{(\bar s_1,1)},\mathcal{B}^1_{(\frac{\bar s_2}{s_2\bar s_2-c_2},\frac {c_2}{(s_2\bar s_2-c_2)^2})}\bigr)\notag\\
&=&
\frac1{\frac {c_2}{(s_2\bar s_2-c_2)^2 } s_1\bar s_1}\left(\frac{\bar s_2}{s_2\bar s_2-c_2}s_1 + \frac{ s_2}{s_2\bar s_2-c_2}\bar s_1 - 1 \right)^2\notag\\
&=&\frac{(s_2\bar s_2-c_2)^2 }{c_1c_2 }\left(\frac{\bar s_2}{s_2\bar s_2-c_2}s_1 + \frac{ s_2}{s_2\bar s_2-c_2}\bar s_1 - 1 \right)^2
\end{eqnarray}
again since $s_1\bar s_1=c_1$. Obviously, the expressions in~\eqref{cap1} and~\eqref{cap2} agree.
The other cases are similar.
\end{proof}
In a next step  we show that, as it is the case in the
classical M\"obius plane, it is possible to transform any two
non-intersecting M\"obius circles into concentric circles.

\subsection{Transformation of non-intersecting circles into concentric circles}\label{transform}
\begin{theorem}\label{MT}
  Any two disjoint circles in $\mathbb{M}(p^m)$ can be mapped to concentric circles using a suitable M\"obius transformation.
\end{theorem}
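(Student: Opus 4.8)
The plan is to reduce the general case to a normal form by peeling off the two obvious families of Möbius transformations and then using an inversion to force concentricity. Since translations $z \mapsto z+a$ and similarities $z \mapsto bz$ are Möbius transformations, and since they evidently preserve the property of being concentric (they fix the center $0$ up to an affine move), the real work lies entirely with the inversion $\tau\colon z \mapsto 1/z$. The strategy is therefore: given two disjoint circles $\mathcal B, \tilde{\mathcal B}$, first move them into a convenient position with a translation/similarity, apply a single inversion to make them concentric, and verify that disjointness guarantees the inversion is well-defined (i.e.\ that no forbidden coincidence occurs).

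**First** I would handle the case of two disjoint circles of the first type, $\cB^1_{(s_1,c_1)}$ and $\cB^1_{(s_2,c_2)}$, as the generic situation. Using a translation $z \mapsto z - s_1$, I may assume the first circle is centered at $0$, i.e.\ it is $\cB_{c_1} = \cB^1_{(0,c_1)}$. The goal is to find an inversion, possibly conjugated by a further translation $z \mapsto z - w$ for a suitable $w \in GF(p^{2m})$, whose application sends both circles to circles concentric about a common center. The key computational input is the transformation rule for inversion recorded in the proof of Theorem~\ref{invariant}: $\tau(\cB^1_{(s,c)}) = \cB^1_{(\bar s/(s\bar s - c),\, c/(s\bar s - c)^2)}$ when $s\bar s \neq c$. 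Two first-type circles $\cB^1_{(\sigma_1,\gamma_1)}$ and $\cB^1_{(\sigma_2,\gamma_2)}$ are concentric precisely when their images have equal centers, which translates into the single equation $\bar\sigma_1/(\sigma_1\bar\sigma_1 - \gamma_1) = \bar\sigma_2/(\sigma_2\bar\sigma_2 - \gamma_2)$. After the normalization $\sigma_1 = 0$ this becomes a condition on the translation parameter $w$, and I would solve it explicitly: the requirement is that $w$ be chosen so that the two centers coincide after inversion, which amounts to finding a root of a low-degree equation whose coefficients are built from $s_2, c_2, c_1$.

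**The main obstacle** will be showing that such a $w$ actually \emph{exists} in $GF(p^{2m})$ and that it can be chosen so that the inversion is admissible — that is, neither circle passes through the new origin $w$ (otherwise inversion would convert a first-type circle into a second-type one, landing through $\infty$, and the target would no longer be a pair of concentric first-type circles). This is exactly where disjointness must be used: the two disjoint circles bound the possible locations of $w$, and I expect the count to work out because disjointness forces the relevant discriminant to be a square in $GF(p^m)$, guaranteeing a solution. Concretely, the condition on $w$ reduces to a quadratic-type equation over $GF(p^m)$ after applying the norm map, and I would argue that disjointness of $\mathcal B$ and $\tilde{\mathcal B}$ is equivalent to the corresponding capacitance $\operatorname{cap}(\mathcal B, \tilde{\mathcal B})$ lying in the correct residue class, which in turn forces solvability.

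**Finally** I would dispatch the remaining type combinations — one or both circles of the second type — by the same mechanism, observing that a preliminary inversion converts any second-type circle (one through $\infty$) into a first-type circle via $\tau(\cB^2_{(s,c)}) = \cB^1_{(\bar s/c,\, s\bar s/c^2)}$ when $c \neq 0$, thereby reducing every case to the first-type analysis already carried out. Two disjoint circles cannot both pass through $\infty$ (two second-type circles always meet at $\infty$), so at most one is of the second type, and a single inversion centered at a generic point clears it to first type without disturbing disjointness. The case of one first-type and one second-type circle is then subsumed into the generic calculation above.
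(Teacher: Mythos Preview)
Your approach is sound but takes a genuinely different route from the paper. The paper gives a non-constructive counting argument: it first shows that exactly $\tfrac{1}{2}(p^{3m}-3p^{2m}+2p^m)$ circles are disjoint from the unit circle $\cB_1$, then shows that exactly $p^{3m}-p^m$ M\"obius transformations stabilize $\cB_1$, and finally applies all of these to the $p^m-2$ circles concentric with $\cB_1$, checking that each image arises with multiplicity $2(p^m+1)$. Since $\frac{(p^m-2)(p^{3m}-p^m)}{2(p^m+1)}$ equals the number of circles disjoint from $\cB_1$, every such circle is the image of a concentric one. Your route is instead the finite-field analogue of the classical limit-point construction. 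One refinement you will want: after translating the first center to $0$, apply a further similarity $z\mapsto z/s_2$ so that the second center lies in $GF(p^m)$; the condition on the inversion center $w\in GF(p^m)$ then becomes the honest quadratic $sw^2-(s^2+c_1-c_2)w+sc_1=0$ with discriminant $(s^2+c_1-c_2)^2-4s^2c_1$. The step you flag as the main obstacle---that disjointness forces this discriminant to be a nonzero square in $GF(p^m)$---is correct and drops out directly once you write the intersection condition for the two circles in coordinates $z=a+\alpha b$; no appeal to capacitance is needed, and the same computation shows $w$ lies on neither circle. The paper's argument is shorter and equation-free, while yours is constructive and actually produces the transforming map.
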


We give a combinatorial proof which uses the following results.

\begin{lemma} \label{HowManyTangent} 
  For any given circle $\cB$ there are $\frac{1}{2}(p^{3m}-3p^{2m}+2p^m)$ circles disjoint to $\cB$.
\end{lemma}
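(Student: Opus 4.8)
The plan is to classify every circle other than the fixed circle $\cB$ by how it meets $\cB$ and then solve for the disjoint class. First I would record the basic intersection dichotomy: by (M1) two distinct circles share at most two points, since three common points would force them to coincide. Hence each of the $p^{3m}+p^m$ circles, apart from $\cB$ itself, is exactly one of \emph{secant} (meeting $\cB$ in two points), \emph{tangent} (one point), or \emph{disjoint} (no point). Next I would pin down the tangent count. Lemma~\ref{tangentConcentric} gives $p^{2m}-1$ tangent circles, but only for a concentric circle $\cB_a$; since the M\"obius group is three times sharply transitive on points and circles are determined by triples of points, it acts transitively on circles and preserves tangency, so any circle can be carried onto some $\cB_a$ and the tangent count $p^{2m}-1$ transfers to an arbitrary $\cB$.

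The core computation is the number of secant circles, which I would obtain by counting incidences with unordered point pairs on $\cB$. A secant circle meets $\cB$ in a well-defined pair $\{P,Q\}\subset\cB$, and conversely any circle $\neq\cB$ through two points $P,Q\in\cB$ meets $\cB$ in exactly $\{P,Q\}$, because a third common point is impossible. So secant circles correspond bijectively to choices of a pair on $\cB$ together with a circle $\neq\cB$ through that pair. I would then count the circles through any two fixed points: these circles partition the remaining $p^{2m}-1$ points of the plane, each circle accounting for $p^m-1$ of them, so there are $\frac{p^{2m}-1}{p^m-1}=p^m+1$ of them, of which $p^m$ differ from $\cB$. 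With $\binom{p^m+1}{2}$ pairs on $\cB$, the number of secant circles is $\binom{p^m+1}{2}\,p^m=\tfrac12 p^{2m}(p^m+1)$.

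Finally I would assemble the count. Starting from the total $p^{3m}+p^m$ and removing $\cB$ itself, the tangent circles, and the secant circles gives the disjoint circles:
$$
(p^{3m}+p^m)-1-(p^{2m}-1)-\tfrac12 p^{2m}(p^m+1)=\tfrac12\bigl(p^{3m}-3p^{2m}+2p^m\bigr),
$$
which is the asserted formula.

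The main obstacle is not any single hard estimate but making the secant count \emph{exact} rather than merely an upper bound. Two points need justification, both resting on the at-most-two-intersection fact from (M1): that a secant circle determines its pair $\{P,Q\}$ uniquely, so no circle is counted for two different pairs and no overcounting occurs; and that the circles through a fixed pair truly partition the remaining points, so the figure $p^m+1$ is precise. Once these are secured, the remaining steps are routine arithmetic.
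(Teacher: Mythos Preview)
Your argument is correct and follows the same tangent/secant/disjoint decomposition as the paper, with an identical secant count and final subtraction. The one small variation is how the tangent number $p^{2m}-1$ is obtained: you import it from Lemma~\ref{tangentConcentric} and transport it to an arbitrary $\cB$ via transitivity of the M\"obius group on circles, whereas the paper rederives it directly from axiom~(M2) by counting, for each $P\in\cB$, the $(p^{2m}-p^m)/p^m=p^m-1$ tangent circles at $P$.
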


\begin{proof} 
  By axiom (M2), for a point $P$ on $\cB$ and any other point $Q$ not on $\cB$, 
  there is a unique circle tangential to $\cB$ through $P$ and $Q$. 
  There are $p^{2m}+1$ points in total and $p^m+1$ points on $\cB$. 
  So, for any of the $p^{2m}-p^m$ points not on $\cB$, there is such a unique circle through a given point $P$ on $\cB$. 
  Since there are $p^m+1$ points on every circle, 
  exactly $p^m$ of these tangent circles are the same. 
  This can be done for every point on $\cB$, 
  which leads to $\frac{(p^m+1)(p^{2m}-p^m)}{p^m}=p^{2m}-1$ circles which are tangential to $\cB$.

  For the circles intersecting $\cB$, note that by axiom (M1), 
  there is a unique circle through two points on $\cB$ and any other point not on $\cB$. 
  So for two fixed points on $\cB$, 
  there are $\frac{p^{2m}-p^m}{p^m-1}=p^m$ circles intersecting $\cB$ in those two points. 
  This can be done for any pair of points on $\cB$, 
  which leads to $\frac{p^{2m}(p^m+1)}{2}$ circles which intersect $\cB$. 
  
  Since there are $p^m(p^{2m}+1)$ circles in total, 
  the number of circles disjoint to $\cB$ is given by
  $$ p^m(p^{2m}+1) - (p^{2m}-1)-\frac{p^{2m}(p^m+1)}{2} -1 = \frac{1}{2}(p^{3m}-3p^{2m}+2p^m)$$
\end{proof}

\begin{lemma}
   There are exactly $p^{3m}-p^m$ M\"obius transformations which map the unit circle $z\overline{z}=1$ to itself.
   In particular, they are given by 
   $$\Phi_1(z) = \frac{bz-ba}{-\overline{a}z+1}$$
   for $b\overline{b}=1$ and $a\overline{a}\neq 1$ and by
   $$ \Phi_2(z)=\frac{b}{z} $$
   for $b\overline{b}=1$.
\end{lemma}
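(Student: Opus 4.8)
The plan is to combine a direct verification that the two stated families fix the unit circle with an orbit--stabilizer count, so that once enough explicit transformations are exhibited, they are forced to be all of them. Write $\mathcal C:=\mathcal B^1_{(0,1)}$ for the unit circle.

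First I would check by direct substitution that every $\Phi_1$ and every $\Phi_2$ maps $\mathcal C$ to itself. For $\Phi_2(z)=b/z$ with $b\overline b=1$ this is immediate, since $z\overline z=1$ gives $\Phi_2(z)\overline{\Phi_2(z)}=(b\overline b)/(z\overline z)=1$. For $\Phi_1(z)=\frac{bz-ba}{-\overline a z+1}$ with $b\overline b=1$ and $a\overline a\neq1$ I would compute $\Phi_1(z)\overline{\Phi_1(z)}$ and simplify using $b\overline b=1$ and $z\overline z=1$; the factor $b\overline b$ cancels and the norm of the numerator equals the norm of the denominator precisely because $z\overline z=1$, so the product is $1$. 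Note also that the representing matrices have nonzero determinant ($b(1-a\overline a)\neq0$ and $-b\neq0$ respectively), so both families consist of genuine M\"obius transformations.

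Next I would count the listed maps, representing each M\"obius transformation by a $2\times2$ matrix over $GF(p^{2m})$ determined up to a common nonzero scalar. The norm map $N\colon GF(p^{2m})^\ast\to GF(p^m)^\ast$ is surjective with kernel of size $\frac{p^{2m}-1}{p^m-1}=p^m+1$, so there are $p^m+1$ admissible $b$ and $p^{2m}-(p^m+1)$ admissible $a$. Comparing representing matrices, distinct pairs $(a,b)$ give distinct $\Phi_1$ (the lower-right entry $1$ fixes the scalar, forcing equality of the remaining entries), distinct $b$ give distinct $\Phi_2$, and no $\Phi_1$ equals any $\Phi_2$, since the diagonal of $\Phi_1$'s matrix is nonzero while that of $\Phi_2$'s vanishes, a pattern preserved under scaling. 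This yields
$$(p^m+1)\bigl(p^{2m}-p^m-1\bigr)+(p^m+1)=(p^m+1)\bigl(p^{2m}-p^m\bigr)=p^{3m}-p^m$$
distinct transformations, all fixing $\mathcal C$.

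It then remains to show these are all of them, which is where orbit--stabilizer enters. By sharp three-transitivity the total number of M\"obius transformations equals the number of ordered triples of distinct points, $(p^{2m}+1)p^{2m}(p^{2m}-1)=p^{6m}-p^{2m}$. Since three distinct points on a circle determine it by (M1), the group acts transitively on the set of all $p^{3m}+p^m$ circles, so the stabilizer of $\mathcal C$ has order
$$\frac{p^{6m}-p^{2m}}{p^{3m}+p^m}=\frac{p^{2m}(p^{2m}-1)(p^{2m}+1)}{p^m(p^{2m}+1)}=p^{3m}-p^m.$$
Matching this with the previous count shows the stabilizer is exhausted by the two exhibited families, giving simultaneously the cardinality and the explicit description. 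The main obstacle I anticipate is the bookkeeping in the matrix-level distinctness argument: one must quotient the projective scalar ambiguity correctly so that the parameter count is exact and not merely an upper bound, whereas the substitution check and the orbit--stabilizer arithmetic are routine.
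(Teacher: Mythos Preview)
Your proof is correct and follows the same overall architecture as the paper: verify that each $\Phi_1$ and $\Phi_2$ fixes the unit circle, count the parameters to get $p^{3m}-p^m$ explicit maps, and then argue that this exhausts the stabilizer. The paper is less explicit than you are about the distinctness of the listed maps, so your matrix-level bookkeeping is a welcome addition rather than an obstacle.

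The one genuine difference is in how the stabilizer size is obtained. The paper computes it directly from sharp three-transitivity restricted to the circle: fixing an ordered triple on $\mathcal C$, a transformation in the stabilizer is determined by the image triple on $\mathcal C$, giving $3!\binom{p^m+1}{3}=p^{3m}-p^m$. You instead compute the full group order $(p^{2m}+1)p^{2m}(p^{2m}-1)$ and divide by the number $p^{3m}+p^m$ of circles via orbit--stabilizer, using transitivity on circles (which follows from (M1) and three-transitivity). Both are short and valid; the paper's route is marginally more self-contained since it avoids invoking the total circle count, while yours situates the result in a standard group-action framework and makes the transitivity on circles explicit.
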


\begin{proof}
  Recall that M\"obius transformations act sharply three times transitive. 
  So, for $\{P_1,P_2,P_3\}$ on the unit circle $z\overline{z}=1$, 
  there are $\binom{p^m+1}{3}$ choices for mapping it to any three points on $z\overline{z}=1$ again.
  Moreover, for any choice of three points, there are $3!=6$ such transformations.
  Hence, there are
    $$ 3!\binom{p^m+1}{3} = p^{3m}-p^m $$
  such M\"obius transformations.
  
  Let us have a closer look at $\Phi_1$. Of course, we have 
    $$ \left(\frac{bz-ba}{-\overline{a}z+1}\right) \overline{\left(\frac{bz-ba}{-\overline{a}z+1}\right)} = 1$$
  whenever $b\overline{b}=1$ and $z\overline{z}=1$. The condition $a\overline{a}\neq 1$ ensures that we do not divide by 0.
  There are $p^m+1$ choices for $b$ and $p^{2m}-(p^m+1)$ choices for $a$. Hence, there are
  $p^{3m}-2p^m-1$ transformations $\Phi_1$.
  
  Similar, $\Phi_2$ maps the unit circle to itself 
  whenever $b\overline{b}=1$, so there are $p^m+1$ such transformations.
  
  Since $(p^{3m}-2p^m-1) + (p^m+1) = p^{3m}-p^m$, these are indeed all such transformations.
\end{proof}

Now we are ready to prove Theorem~\ref{MT}:

\begin{proof}[Proof of Theorem~\ref{MT}]
  There are $p^m-2$ circles concentric to $z\overline{z}=1$, 
  namely all those circles $z\overline{z}=c$ for $c=2,\ldots,p^m-1$.
  We now apply all the M\"obius transformations, which map the unit circle to itself, to those concentric circles.
  
  Note that every image occurs exactly $2(p^m+1)$ times.
  
  To see this, let us first think about the circle $z\overline{z}=c$ for $c\in \{2,\ldots,p^m-1\}$ fixed.
  Clearly, $\Phi_1$ for choosing $a=0$ maps $z\overline{z}=c$ to $z\overline{z}=c$, for all $b\overline{b}=1$.
  Moreover, applying $\Phi_2$ to $z\overline{z}=\frac{1}{c}$ gives $z\overline{z}=c$ as well, for all $b\overline{b}=1$.
  Since for any other choice of $a$ in $\Phi_1$, the center of $z\overline{z}=c$ is translated, 
  the circle $z\overline{z}=c$ occurs $2(p^m+1)$ times 
  when applying all $p^{3m}-p^m$ M\"obius transformations $\Phi_1$ and $\Phi_2$ described above to $z\overline{z}=c$.
  Similarly can be proceeded for other circles $(z-s)(\overline{z}-\overline{s})=c$. 
 
  So, we apply the $p^{3m}-p^m$ M\"obius transformations to the $p^m-2$ circles concentric to $z\overline{z}=1$.
  Every image occurs $2(p^m+1)$ times, i.e.\ we get
  $$ \frac{(p^m-2)(p^{3m}-p^m)}{2(p^m+1)} = \frac{1}{2}(p^{3m}-3p^{2m}+2p^m) $$
  which is, by Lemma \ref{HowManyTangent}, exactly the number of circles disjoint to $z\overline{z}=1$.
\end{proof}

\subsection{General criterion for Steiner chains}
Let $\mathcal B$ and $\tilde{\mathcal B}$ be non-intersecting circles in $\mathbb M(p^m)$.
As we have seen in Section~\ref{transform}, it is possible to transform them
into $\tau(\mathcal B)=\mathcal B^1_{(0,1)}$, $\tau(\tilde{\mathcal B})=\mathcal B^1_{(0,b)}$
by using a suitable M\"obius transformation. By Theorem~\ref{invariant}, we have
$$
c:=\operatorname{cap}(\mathcal B,\tilde{\mathcal B}) = 
\operatorname{cap}(\tau(\mathcal B),\tau(\tilde{\mathcal B})) = 
\operatorname{cap}(\mathcal B^1_{(0,1)}, \mathcal B^1_{(0,b)})=
\frac1{b}(1+b)^2
$$
Solving for $b$ gives $b=\frac12(c-2\pm\sqrt{c(c-4)})$.
Changing between the two possible signs corresponds to applying
an additional inversion $z\mapsto 1/z$ to both circles, and we may take the 
plus sign by convention. Then, the general criterion follows from Theorem~\ref{concentric}:
\begin{theorem}
Let  $\mathcal B$ and $\tilde{\mathcal B}$ be non-intersecting circles,
$c:=\operatorname{cap}(\mathcal B,\tilde{\mathcal B})$ and
 $b:=\frac12(c-2+\sqrt{c(c-4)})$. Then,
  if $b = \mu^2$, for $\mu$ in $GF(p^m)$,
  $\mathcal B$ and $\tilde{\mathcal B}$  carry a proper Steiner chain of length $k$ if and only if the following conditions are satisfied.
    \begin{enumerate}
      \item $-\mu$ is a non-square in $GF(p)$,
      \item $\mu$ solves $ P_1^k=1$ for $P_1$ given by 
        \begin{align}
          P_1 = \frac{-\mu^2+6\mu-1 + 4(\mu-1)\sqrt{-\mu}}{(1+\mu)^2}
        \end{align}
            but $P_1^l \neq 1$ for all $1 \leq l \leq k-1$.
    \end{enumerate}
\end{theorem}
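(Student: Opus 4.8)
The plan is to reduce the general statement about two arbitrary non-intersecting circles to the already-established concentric case in Theorem~\ref{concentric}, using the M\"obius invariance of the capacitance as the bridge. The key observation is that all three ingredients needed for the concentric criterion---the construction of a proper Steiner chain, its length, and the condition on $-\mu$---are properties of the \emph{pair} of circles that are preserved by M\"obius transformations. Since a Steiner chain is a configuration of mutually tangent circles and tangency is preserved under the automorphisms $\Phi$ of $\bM(p^m)$, the existence and length of a proper Steiner chain for $\mathcal B$ and $\tilde{\mathcal B}$ is unchanged when we apply any M\"obius transformation to both.

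First I would invoke Theorem~\ref{MT} to find a M\"obius transformation $\tau$ carrying $\mathcal B$ to $\mathcal B^1_{(0,1)}=\cB_1$ and $\tilde{\mathcal B}$ to some concentric circle $\mathcal B^1_{(0,b)}=\cB_b$. The crucial point is to identify \emph{which} value of $b$ arises, and this is exactly what the capacitance computation in the preceding paragraph supplies: by Theorem~\ref{invariant},
$$
c=\operatorname{cap}(\mathcal B,\tilde{\mathcal B})=\operatorname{cap}(\cB_1,\cB_b)=\frac1b(1+b)^2,
$$
and solving the resulting quadratic $b^2+(2-c)b+1=0$ yields $b=\tfrac12(c-2\pm\sqrt{c(c-4)})$. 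I would then argue that the two sign choices give the two circles related by the inversion $z\mapsto 1/z$ (which sends $\cB_b$ to $\cB_{1/b}$ and fixes $\cB_1$), so that both choices describe the \emph{same} geometric pair up to a M\"obius transformation; hence we lose no generality in taking the $+$ sign as the stated convention.

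Once $b$ is pinned down, the proof is essentially complete: applying Theorem~\ref{concentric} to the concentric pair $\cB_1,\cB_b$ with $b=\mu^2$ gives precisely conditions (1) and (2), and the M\"obius-invariance of the Steiner-chain structure transports this equivalence back to $\mathcal B$ and $\tilde{\mathcal B}$. Concretely, $\mathcal B$ and $\tilde{\mathcal B}$ carry a proper Steiner chain of length $k$ if and only if $\cB_1$ and $\cB_b$ do, because $\tau$ is a bijection on circles preserving tangency and, by the characterization following Theorem~\ref{th: finite Steiner}, also preserving the proper/degenerate distinction (this last point rests on $\tau$ preserving the count of contact points). Thus the two displayed conditions on $\mu$ are equivalent to the existence of the chain.

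The main obstacle I anticipate is not any single calculation but the careful verification that ``properness'' is genuinely a M\"obius invariant. The definition of a proper chain is given in terms of the two distinguished groups $\{\cB^1_{(sP,c)}\}$ and $\{\cB^1_{(s'P,c')}\}$ of common tangent circles, which is tied to the concentric normal form; under $\tau^{-1}$ these circles need no longer be of the form $\cB^1_{(sP,c)}$, so I must instead use the intrinsic characterization stated after the proof of Theorem~\ref{th: finite Steiner}---namely that a chain is proper exactly when no point is a contact point of more than two circles, equivalently when the total number of contact points is $3k$. Since M\"obius transformations are automorphisms of $\bM(p^m)$ and hence preserve incidence and tangency, this intrinsic count is manifestly invariant, and this is the clean way to close the argument rather than tracking the parameters $s,c$ through $\tau$.
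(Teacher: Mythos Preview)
Your proposal is correct and follows essentially the same route as the paper: transform to concentric circles via Theorem~\ref{MT}, use the M\"obius invariance of the capacitance (Theorem~\ref{invariant}) to identify $b$ from $c$, handle the sign ambiguity via the inversion $z\mapsto 1/z$, and then invoke Theorem~\ref{concentric}. If anything, you are more careful than the paper, which does not explicitly justify that properness of a Steiner chain is M\"obius invariant; your use of the intrinsic contact-point characterization to establish this is the right way to fill that gap.
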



\section{Comparison to the Euclidean plane}

Steiner's Porism is well understood in the Euclidean plane.
For two concentric circles of radius $1$ and $R$,
a proper Steiner chain of length $k$ 
which wraps around the inner circle once
can be constructed if and only if 
$$ R = \frac{1 + \sin(\phi)}{1-\sin(\phi)} $$
where $\phi = \frac{\pi}{k}$.

For some values of $k$, 
we can express $\sin(\frac{\pi}{k})$ explicitly in terms of radicals.
In the following table, we calculate $R$ for some values of $k$.

\begin{table}[h]
  \begin{center}
    \begin{tabular}{|c|c|c|c|}
       \hline
       $k$ & \small{$\frac{180}{k}$} & $\sin{\frac{\pi}{k}}$ & $R$ \\[0.1 cm]
       \hline
       \hline
       $3$ & $60$ & $\frac{\sqrt{3}}{2}$ & $7+4\sqrt{3}$ \\[0.1 cm]
      \hline
        $4$ & $45$ & $\frac{\sqrt{2}}{2}$ & $3+2\sqrt{2}$ \\[0.1 cm]
      \hline
        $5$ & $36$ & $\frac{\sqrt{10-2\sqrt{5}}}{4}$ & $11 - 4 \sqrt{5} + 2 \sqrt{50 - 22 \sqrt{5}}$ \\[0.1 cm]
      \hline
        $6$ & $30$ & $\frac{1}{2}$ & $3$ \\
       \hline
          $8$ & $22.5$ & $\frac{\sqrt{2-2\sqrt{2}}}{2}$ & $7 - 4 \sqrt{2} + 2 \sqrt{20 - 14 \sqrt{2}}$ \\[0.1 cm]
      \hline
    \end{tabular}
  \end{center}
  \caption{Some values for Steiner chains of length $k$.}
  \label{tab: some values for Sin}
\end{table}

Note that our values for $R$ coincide with the values for $\mu$ calculated in
the Lemmas \ref{lemma: 3 chain}, \ref{lemma: 4 chain}, 
\ref{lemma: 5 chain}, \ref{lemma: 6 chain} and \ref{lemma: 8 chain}.

Let us have a closer look at why this is the case. 
Recall that for two common tangent circles of $\cB_1$ and $\cB_b$,
we calculated their point of intersection in Lemma \ref{lemma: P1 and P2}, namely
  \begin{align} 
      P = \frac{(-\mu^2+6\mu-1) + 4(\mu-1)\sqrt{-\mu}}{(1+\mu)^2}.
  \end{align}
For the construction of a Steiner chain, we needed $-\mu$ to be a non-square in $GF(p^m)$.
So, let us rewrite $P$ as
  \begin{align} 
      P = \frac{-\mu^2+6\mu-1}{(1+\mu)^2} + \sqrt{-\mu} \ \frac{4(\mu-1)}{(1+\mu)^2}.
  \end{align}
  
Note that $\frac{-\mu^2+6\mu-1}{(1+\mu)^2}$ and $\frac{4(\mu-1)}{(1+\mu)^2}$ are both in $GF(p^m)$.
By assumption, $\sqrt{-\mu}$ is not in $GF(p^m)$, 
so let us consider $P$ to be an element of
$GF(p^m)(\sqrt{-\mu})$, which is isomorphic to $GF(p^{2m})$.
So, we can write all elements of $GF(p^m)(\sqrt{-\mu})$ in the form $z = a+b\sqrt{-\mu}$.
Let us refer to $a$ as the \emph{real part} of $z$, denoted by $\Re(z)$, 
and to $b$ as the \emph{imaginary part} of $z$, denoted by $\Im(z)$.

Now, recall that for a Steiner chain of length $k$
the tangent points on $\cB_1$ are given by
$1$, $P$, $P^2$, $\ldots$, $P^{k-1}$. 
The real part of $P^2$ satisfies
$$ \Re(P^2) = 2 \Re(P)^2 - 1 $$
and the imaginary part of $P$ satisfies
$$ \Im(P^2) = 2 \Re(P) \Im(P).$$
Note that those equations are the same as for the sine and cosine in the Euclidean plane,
namely
$$ \cos(2 \phi) = 2 \cos(\phi)^2-1 $$
and
$$ \sin(2 \phi) = 2 \cos(\phi) \sin(\phi).$$
Hence, calculating $P_1^2$ is in a sense the same as doubling the angle between $1$ and a point on $\cB_1$.


\end{document}